\documentclass[reqno]{amsart}

\usepackage{amsthm}
\usepackage{amsmath}
\usepackage{amssymb}
\usepackage{mathrsfs}
\usepackage{bbm}
\usepackage{amscd}
\usepackage{extarrows}
\usepackage[colorlinks]{hyperref}

\theoremstyle{plain}
\newtheorem{thm}{Theorem}[section]
\newtheorem{thml}{Theorem}

\newtheorem{prop}[thm]{Proposition}
\newtheorem{lem}[thm]{Lemma}

\newtheorem{cor}[thm]{Corollary}

\newtheorem*{mainthm'}{Main Theorem'}
\newtheorem*{clm}{Claim}

\theoremstyle{definition}
\newtheorem{defn}[thm]{Definition}
\newtheorem{exa}[thm]{Example}

\theoremstyle{remark}
\newtheorem*{rmk}{Remark}
\newcommand{\eps}{\varepsilon}

\newcommand{\whT}{\widehat{T}}

\newcommand{\dif}{\,\mathrm{d}}
\newcommand{\Dif}{\mathrm{D}}

\newcommand{\A}{\mathbb{A}}
\newcommand{\R}{\mathbb{R}}
\newcommand{\Z}{\mathbb{Z}}

\newcommand{\Crit}{\mathrm{Crit}}

\newcommand{\ind}{1}

\DeclareMathOperator{\diam}{diam} 
\DeclareMathOperator*{\Ls}{Ls}

\title[Periodic optimization over an expanding circle map]{Typicality of periodic optimization over an expanding circle map}

\author[R. Gao]{Rui Gao}

\address{Rui Gao: School of Mathematical Sciences, Qufu Normal University, Jining 273165, China}
\email{gaoruimath@qfnu.edu.cn}

\author[W. Shen]{Weixiao Shen}
\address{Weixiao Shen: Shanghai Center for Mathematical Sciences, New Cornerstone Science Laboratory, Jiangwan Campus, Fudan University, No
	2005 Songhu Road, Shanghai 200438, China}
\email{wxshen@fudan.edu.cn}

\author[R. Zhang]{Ruiqin Zhang}
\address{Ruiqin Zhang: School of Mathematical Sciences, Fudan University, No 220 Handan Road, Shanghai 200433, China}
\email{21110180029@m.fudan.edu.cn}
\begin{document}

	\begin{abstract}
		We study the ergodic optimization problem over a real analytic expanding circle map. 
		We show that in both the topological and the measure-theoretical senses, a typical $C^r$ performance function has a unique maximizing measure 
		and the unique maximizing measure is supported on a periodic orbit, for $r=1,2,\dots,\infty,\omega$.
	\end{abstract}
	
	\maketitle
	
\section{Introduction}\label{sec:intro} 
	Given a continuous map $T: X\to X$ from a non-empty compact metric space into itself, let $\mathcal{M}(T)$ denote the collection of  all $T$-invariant Borel probability measures. For a continuous function $f:X\to \mathbb{R}$,  which is called {\em a performance function} in the following, let
	\begin{equation}\label{eqn:betaalpha}
		\beta(f,T)=\sup_{\mu\in\mathcal{M}(T)} \int f \dif\mu.
	\end{equation}
	A measure $\mu\in\mathcal{M}(T)$ is called a {\em maximizing measure} if $\beta(f,T)=\int f \dif\mu$.   
	An outstanding problem in {\em ergodic optimization} is the so-called {\em typically periodic optimization} ({\em TPO} for short) problem: whether typically a maximizing measure is a {\em periodic measure} of $T$, i.e., supported on a periodic orbit, for chaotic dynamical systems $T$ (such as uniformly expanding maps, or Axiom A diffeomorphisms). TPO was first conjectured by Hunt-Ott \cite{HO96L, HO96E} to hold in a measure-theoretical sense for typical chaotic systems, and then by Yuan-Hunt \cite{YH96} in a topological sense for Axiom A or uniformly expanding systems. See also \cite{Boc18,Jen19}. 

As evidence to support these conjectures, in the case that $T(x)=2x \mod 1$, Bousch~\cite{Bou00} studied the specific one-parameter family $f_t(x)=\cos (2\pi (x-t))$ and proved that for almost every $t$, $f_t$ is uniquely maximized by a periodic measure. Indeed, he showed that for every $t$, $f_t$ has a unique maximizing measure and the measure is {\em Sturmian}, i.e., supported in a semicircle.  See~\cite{Jen07, Jen08, FSS21, Gao21} for similar results in this direction.

The topological TPO problem has a negative answer in the $C^0$ category: for a general topological dynamical system $T:X\to X$, a generic continuous $f:X\to\R$ has a unique maximizing measure and the measure is of zero entropy but fully supported; see \cite{BJ02, JM08, Bre08, Mor10}.
	It has a positive answer for the space of H\"older functions, Lipschitz functions and $C^1$ functions,  in the case that $T$ is uniformly expanding or Axiom A; see~\cite{CLT01, Con16,HLMXZ19}. However, the perturbation techniques used in these works (originated from ~\cite{YH96}) cannot be applied to spaces of performance functions with higher regularity. 

	There have been very few results on the measure-theoretical typicality in ergodic optimization. In \cite{Mor21}, it is proved that in a very general setting, having a unique maximizing measure is a prevalent property; see also \cite{JLLZ25}. In \cite{BZ16, DLZ24}, in the case that $T$ is the full one-sided shift on two symbols, it is proved that for spaces of functions with a very strong modulus of continuity, periodic maximizing is a prevalent property in these spaces (similar spaces were studied earlier in \cite{QS12} for the topological TPO problem). However, the required modulus of continuity is too strong and there is no natural analogy for functions defined on a manifold.

	This paper is devoted to studying the TPO problem for expanding circle maps. A $C^1$ map $T:\R/\Z\to\R/\Z$ is {\em expanding} if there exist $C_*>0$ and $\lambda_*>1$ such that 
	\begin{equation}\label{eqn:C*lambda*}
		|(T^n)'(x)|\ge C_*\lambda_*^n
	\end{equation}
	holds for all $x\in \R/\Z$ and all positive integers $n$. For $r=1,2,\dots, \infty,\omega$, we use $\mathcal{E}^r$ to denote the space of all $C^r$ expanding circle maps and use $\mathcal{C}^r$ to denote the space of all $C^r$ functions from $\R/\Z$ to $\R$. Here $C^\omega$ means real analytic as usual. When $r=1,2,\dots, \infty$,  $\mathcal{E}^r$ and $\mathcal{C}^r$ are endowed with the $C^r$ topology. Moreover, for each $q=1,2,\dots$, we say that $f_{\mathbf{t}}:\R/\Z\to \R$, $\mathbf{t}\in [0,1]^q$, is {\em a $C^r$ family with $q$ parameters} if $(\mathbf{t}, x)\mapsto f_{\mathbf{t}}(x)$ is $C^r$; the space of all $C^r$ families with $q$ parameters will also be endowed with the $C^r$ topology. 

Let $d(\cdot,\cdot)$ denote the natural metric on $\R/\Z$ throughout the paper. For $T\in \mathcal{E}^1$ and $f\in \mathcal{C}^1$, the {\em Aubry set} of  $(f,T)$ (this notion will be reviewed in \S~\ref{sse:aubry}), denoted by $\Omega(f,T)$, is defined as the collection of $x\in\R/\Z$ satisfying the following property: for any $\eps>0$, there exists a positive integer $n$ and $z\in T^{-n}(x)$ such that $d(z,x)<\eps$ and
	$$\Big|\sum_{i=0}^{n-1}\left(f(T^i(z))-\beta(f,T)\right)\Big|<\eps.$$
It is not difficult to deduce from~\cite{CLT01} that (see Corollary~\ref{cor:lock}) if $\Omega(f,T)$ is a periodic orbit of $T$, then $(f,T)$ satisfies the following $C^1$ {\em locking property}: each function in some $C^1$ open neighborhood of $f$ has a unique maximizing measure and it is supported on the periodic orbit $\Omega(f,T)$.

	\begin{thml}\label{thm:A} Let $r=1,2,\dots, \infty$ and let $q=1,2,\dots$. Then there is a residual subset $\mathcal{E}_*^r$ of $\mathcal{E}^r$ which contains $\mathcal{E}^\omega$ such that for any $T\in \mathcal{E}_*^r$ the following holds for a generic $C^r$ family of functions $f_{\mathbf{t}}: \R/\Z\to \R$, $\mathbf{t}\in [0,1]^q$: For Lebesgue a.e. $\mathbf{t}\in [0,1]^q$, $\Omega(f_{\mathbf{t}},T)$ is a periodic orbit.
	\end{thml}

	This provides a confirmation of Hunt-Ott's conjecture \cite{HO96L,HO96E} for expanding circle maps with $C^r$ performance functions, $r=1,2,\dots,\infty$.

\begin{thml}\label{thm:B} Let $r=1,2,\dots,\infty$. Then there is a residual subset $\mathcal{E}_*^r$ of $\mathcal{E}^r$ which contains $\mathcal{E}^\omega$ such that the following holds for each $T\in \mathcal{E}_*^r$: There is an open and dense subset $\mathcal{D}^r(T)$ of $\mathcal{C}^r$ such that for each $f\in \mathcal{D}^r(T)$, $\Omega(f,T)$ is a periodic orbit.
	\end{thml}

Yuan-Hunt's conjecture asserts that we can take $\mathcal{E}_*^r=\mathcal{E}^r$ in Theorem~\ref{thm:B}. For $r=1$, this was proved in~\cite{HLMXZ19}. 
For $r=2,3,\dots,\infty$, Theorem~\ref{thm:B} provides strong support for Yuan-Hunt's conjecture in the $C^r$ topology, but the conjecture itself remains open.

Both theorems are deduced from Theorem~\ref{thm:main} below, using the density of $\mathcal{E}^\omega$ in $\mathcal{E}^r$ for each $r=1,2,\dots,\infty$. In Theorem~\ref{thm:main}, we prove that for $T\in \mathcal{E}^\omega$, in an appropriate space of $C^\omega$ performance functions, being uniquely maximized by a periodic measure is a prevalent property.  Working in the real analytic setting brings a key advantage: every real analytic performance function has a maximizing measure of entropy zero, as proved in~\cite{GS24}.

	Prevalence is a concept introduced in~\cite{HSY92} to address measure-theoretical typicality for infinite-dimensional spaces (it was recognized later by the authors of ~\cite{HSY92} that this notion was already introduced before in \cite{Chr73}). Let $V$ be a topological vector space which is completely metrizable. A subset $S$ of $V$ is called {\em shy} (or a {\em Haar zero set} following \cite{Chr73}) if there is a Borel probability measure on $V$ with compact support and a Borel set
	$\widetilde{S}\supset S$ such that 
	$$\mu(\widetilde{S}-v)=0, \text{ for all } v\in V.$$
	In this case, we say that $\mu$ is a {\em transverse} measure for $S$.
	A subset of $V$ is called {\em prevalent} if its complement is shy. We recommend~\cite{HK10} for many applications of this notion in dynamical systems.

Given a linear subspace $\mathcal{X}$ of $\mathcal{C}^1$ and $T\in\mathcal{E}^1$, denote
\begin{equation}\label{eqn:sharp}
  \mathcal{X}_{\#}(T)=\{f\in \mathcal{X}: \text{$\Omega(f,T)$ is a periodic orbit}\}.
\end{equation}
Denote $\mathbb{N}=\{0,1,\dots\}$ and $\mathbb{N}_1=\{1,2,3,\dots\}$. The main result of this paper is as follows. 

	\begin{thml}\label{thm:main} 
Let $T\in\mathcal{E}^\omega$ and let $\mathcal{H}\subset \mathcal{C}^\omega$ be a Banach space containing all trigonometric polynomials. Then $\mathcal{H}_{\#}(T)$ is a prevalent subset of $\mathcal{H}$. More precisely, there exist $\varphi_n\in \mathcal{H}$, $n=1,2,\dots$ such that $\sum_{n=1}^\infty \|\varphi_n\|_{\mathcal{H}}<\infty$ and such that for each $f\in \mathcal{H}$, 
$$m_\infty\left(\left\{(t_n)_{n=1}^\infty\in [0,1]^{\mathbb{N}_1}: f+ \sum_{n=1}^\infty t_n \varphi_n \not\in \mathcal{H}_{\#}(T)\right\}\right)=0,$$
where $m_\infty$ denotes the infinite product of the Lebesgue measure on $[0,1]$. 
	\end{thml}

	The following Banach spaces $\mathcal{C}^\omega_{p,\rho}$, $1\le p\le \infty$ and $\rho>0$, serve as concrete examples which satisfy the assumptions of Theorem~\ref{thm:main}.
	Let $$U_\rho=\{x+\mathrm{i} y: x\in \R, |y|<\rho\}/\Z\subset \mathbb{C}/\mathbb{Z},$$
	and let $\mathcal{A}^p(U_\rho)$ (known as the {\em Bergman space}) denote the collection of all $L^p$ 
	holomorphic functions $F:U_\rho\to \mathbb{C}$, endowed with the $L^p$ norm. Let 
\begin{equation}\label{eqn:analytic space}
  \mathcal{C}^\omega_{p,\rho}=\{f\in \mathcal{C}^\omega: f \text{ extends to a function in } \mathcal{A}^p(U_\rho)\},
\end{equation}
endowed with the norm induced from $\mathcal{A}^p(U_\rho)$. Each of the spaces $\mathcal{C}^\omega_{p,\rho}$ obviously contains all trigonometric polynomials. We remark that the inclusion from $\mathcal{C}^\omega_{p,\rho}$ to $\mathcal{C}^r$ is continuous, $r=0,1,\dots,\infty$, where $\mathcal{C}^0$ denotes the space of continuous functions from $\R/\Z$ to $\R$ endowed with the $C^0$ topology. The case $r=0$ follows from for example \cite[Theorem 1, p.7]{DS04}. Using the Cauchy formula, the rest cases follows.

	{\bf Outline of the proof.} We shall now explain our strategy for the proof of Theorem~\ref{thm:main}. Fix $T\in\mathcal{E}^\omega$ and fix a Banach space $\mathcal{H}$ as in Theorem~\ref{thm:main}. Let $\mathcal{H}_{\mathrm{per}}$ denote the collection of $f\in \mathcal{H}$ admitting a periodic maximizing measure.

For each $f\in \mathcal{C}^1$, one can associate to $f$ a $T$-invariant compact subset $K(f)=K(f,T)$ such that $K(f)$ depends on $f\in \mathcal{C}^1$ in an upper semi-continuous manner and such that the $(f,T)$-maximizing measures are precisely the measures in $\mathcal{M}(T)$ supported on $K(f)$  
(see Proposition~\ref{prop:setK}). A sufficient condition for $(f,T)$ to have a periodic maximizing measure is that $T|K(f)$ is injective. 
	
	We introduce the {\em Sturmian-like} property as a generalization of the Sturmian property mentioned before. See the remark after Definition~\ref{defn:sturlike} for a comparison of the Sturmian-like property with similar earlier generalizations of the Sturmian property given in~\cite{Bre06E,Bre06N,HJ07}. We shall show that for ``most" $f\in \mathcal{H}\setminus \mathcal{H}_{\mathrm{per}}$, $T:K(f)\to K(f)$ is Sturmian-like, which means that it has only finitely many branching points and outside these branching points, $T:K(f)\to K(f)$ has a continuous inverse (see Definition~\ref{defn:sturlike}). To this end, we first show that failing to be Sturmian-like implies existence of a {\em supercritical value} (see Definition~\ref{defn:sc}) which satisfies a certain non-transversality condition.  Theorem~\ref{thm:Shy}, together with the result of ~\cite{GS24} mentioned before, then implies that $\mathcal{H}\setminus (\mathcal{H}_{\mathrm{per}}\cup \mathcal{H}_{\mathrm{st}})$ is a countable union of finite-dimensionally shy (see \S~\ref{sse:shy} for its definition) subsets  of $\mathcal{H}$, where 
\begin{equation}\label{eqn:st set}
  \mathcal{H}_{\mathrm{st}}=\{f\in \mathcal{H}: T|K(f) \text{ is Sturmian-like}\}.
\end{equation}
	The proof of Theorem~\ref{thm:Shy} is a modification of an argument of Tsujii~\cite{Tsu01} for a closely related transversality problem in the study of linear skew products over expanding circle maps.
	
	Most of the argument so far works for high-dimensional expanding maps except that the result of ~\cite{GS24} is  for one-dimensional maps. Our last step, to show that $\mathcal{H}_{\mathrm{st}}\setminus \mathcal{H}_{\mathrm{per}}$ is a countable union of finite-dimensionally shy subsets, is more restricted to the one-dimensional case. Our argument is inspired  by the method used to search for the pre-Sturmian condition in~\cite{Bou00}. Exploiting the one-dimensional structure of $\R/\Z$ we show that if $f\in \mathcal{H}_{\mathrm{st}}\setminus \mathcal{H}_{\mathrm{per}}$ has a unique maximizing measure, then $f$ satisfies an identity (see Proposition~\ref{prop:identity}), and the identity can be destroyed by a suitable perturbation (see Theorem~\ref{thm:prevalent1}).

\medskip
	The rest of the paper is organized as follows. In \S~\ref{sec:pre}, we recall some known results which will be used in our argument. We shall use these results to deduce Theorems \ref{thm:A} and \ref{thm:B} from Theorem~\ref{thm:main}. The rest of the paper is devoted to the proof of Theorem~\ref{thm:main}. In \S~\ref{sec:sp}, we introduce the Sturmian-like property and the concept of supercritical value, and discuss their relations. In  \S~\ref{sec:B}, we show that most performance functions do not have a supercritical value. The proof of Theorem~\ref{thm:main} is completed in \S~\ref{sec:final}.

	\medskip
	
	{\bf Acknowledgment.}
	The authors thank Shaobo Gan for valuable discussions, and thank the anonymous referees for valuable suggestions. The work is supported by the National R\&D Program 2021YFA1003200. RG is supported by the Taishan
Scholar Project of Shandong Province (tsqnz20230614) and the National Natural Science Foundation of China (12571201). WS is supported by the New Cornerstone Science Foundation through the New Cornerstone Investigator Program and the XPLORER PRIZE.

	\section{Preliminaries}\label{sec:pre}

	In this section, we review some known facts, including basic facts about sub-actions and Aubry sets. We shall use these facts to deduce Theorems \ref{thm:A} and \ref{thm:B} from Theorem~\ref{thm:main}.

	\subsection{Sub-actions}\label{subsec:subact}
Following~\cite{CLT01}, a continuous function $g:\R/\Z\to \R$ is called a {\em sub-action} of $(f,T)\in \mathcal{C}^0\times \mathcal{E}^1$ if for any $x\in \R/\Z$,
	$$g(x)+f(x)\le g(T(x))+\beta(f,T).$$
	The terminology is borrowed from Lagrangian dynamics.
	A sub-action is called {\em calibrated} if
	$$\sup_{y\in T^{-1}(x)} (g(y)+f(y))= g(x)+\beta(f,T)$$
	for each $x\in \R/\Z$.

	Sub-actions play an important role in the ergodic maximization problem; see for example \cite{Bou00,CLT01,Bou01,Con16,Boc18,Jen19,HLMXZ19}. For $T\in\mathcal{E}^1$, it is well-known that any $f\in\mathcal{C}^1$ has a Lipschitz calibrated sub-action $g$. This is often referred to as {\em Ma\~n\'e's lemma}. In fact, it is well-known that for $(f,T)\in\mathcal{C}^1\times\mathcal{E}^1$, a calibrated sub-action of $(f,T)$ is automatically Lipschitz (see \cite[Lemme~B]{Bou00} or \cite[Proposition~3.3]{Gar17}). Furthermore, there exists a constant $C_0(T)$  such that any (Lipschitz) calibrated sub-action $g$ of $f$ satisfies
	\begin{equation}\label{eqn:Calig}
		\mathrm{Lip}(g)\le C_0(T)\cdot\mathrm{Lip}(f),
	\end{equation}
where $\mathrm{Lip}(\cdot)$ denotes the best Lipschitz constant. Indeed, $C_0(T)$ can be chosen uniformly in a small neighborhood of any $T_0\in \mathcal{E}^1$ (see for example \cite[Lemma~2.1]{Con16}). 	
	
	For a continuous sub-action $g$ of $(f,T)\in \mathcal{C}^1\times \mathcal{E}^1$, let
	\begin{equation}\label{eqn:Efg}
		E_{f,T,g}=\{x\in \mathbb{R}/\mathbb{Z}: g(x)+f(x)=g(T(x))+\beta(f,T)\},
	\end{equation}
and
	\begin{equation}\label{eqn:LambdafTg}
		\Lambda_{f,T, g}=\{x\in E_{f,T,g}: T^n(x)\in E_{f,T,g} \text{ for all } n\ge 0\}.
	\end{equation}
So $T(\Lambda_{f,T,g})\subset \Lambda_{f,T,g}$. Note that $g$ is a calibrated sub-action of $(f,T)$ if and only if $T(E_{f,T,g})=\mathbb{R}/\mathbb{Z}$, and in this situation $T(\Lambda_{f,T,g})=\Lambda_{f,T,g}$.

	Let $G_{f,T}$ denote the collection of all (Lipschitz) calibrated sub-actions $g$ of $(f,T)$. As mentioned above,
	$G_{f,T}\not=\emptyset$ for any $(f,T)\in \mathcal{C}^1\times \mathcal{E}^1$.
	Let
	$$\mathcal{G}=\{(f,T,g): f\in \mathcal{C}^1, T\in \mathcal{E}^1, g\in G_{f,T}\}$$
	be endowed with the topology as a subspace of $\mathcal{C}^1\times \mathcal{E}^1\times \mathcal{C}^0$.
		Let
	\begin{equation}\label{eqn:setK}
		K(f,T)=\bigcup_{g\in G_{f,T}} \Lambda_{f,T,g}. 
	\end{equation}

Let $\mathbf{Cpt}(\R/\Z)$ denote the space of non-empty compact subsets of $\R/\Z$, endowed with the Hausdorff metric. Given a sequence $\{\Lambda_n\}_{n=1}^\infty$ in $\mathbf{Cpt}(\R/\Z)$, we define its {\em Kuratowski limit superior}
	$$\Ls_{n\to\infty} \Lambda_n:=  \{x\in \R/\Z: \exists n_k\to\infty, x_{n_k}\in \Lambda_{n_k}  \text{ such that } x_{n_k}\to x\},$$
	which is an element of $\mathbf{Cpt}(\R/\Z)$.
	A map $F: A\to\mathbf{Cpt}(\R/\Z)$ defined on a metrizable space $A$ is called {\em upper semi-continuous at } $a\in A$ if for $a_n\to a$ in $A$, we have 
	$$\Ls_{n\to\infty} F(a_n)\subset F(a).$$

In the next proposition, we show that the set $K(f,T)$ depends on $(f,T)$ upper semi-continuously. 

	\begin{prop}\label{prop:setK} The following hold.
		\begin{enumerate}
			\item[(i)] The map $(f,T,g)\mapsto \Lambda_{f,T,g}$ is upper semi-continuous from $\mathcal{G}$ to $\mathbf{Cpt}(\R/\Z)$.
			\item[(ii)] $K(f,T)$ is compact and $T(K(f,T))=K(f,T)$ for $(f,T)\in\mathcal{C}^1\times \mathcal{E}^1$.
			\item[(iii)] $(f,T)\mapsto K(f,T)$ is upper semi-continuous from $\mathcal{C}^1\times \mathcal{E}^1$ to $\mathbf{Cpt}(\R/\Z)$.
			\item[(iv)] For each $(f,T)\in \mathcal{C}^1\times \mathcal{E}^1$, the set of invariant Borel probability measures of $T|K(f,T)$ is equal to $\mathcal{M}_{\max}(f,T)$,
			where $$\mathcal{M}_{\max}(f,T)=\left\{\mu\in \mathcal{M}(T): \int f\dif \mu=\beta(f,T)\right\}.$$
		\end{enumerate}
	\end{prop}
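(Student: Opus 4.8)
The plan is to establish the four assertions more or less in the order stated, since each one feeds into the next. For (i), I would take a convergent sequence $(f_n,T_n,g_n)\to(f,T,g)$ in $\mathcal G$ together with points $x_n\in\Lambda_{f_n,T_n,g_n}$ converging to some $x$, and show $x\in\Lambda_{f,T,g}$. The first step is to observe that $\beta(\cdot,\cdot)$ is continuous on $\mathcal C^1\times\mathcal E^1$ (indeed $\beta$ is the supremum of the continuous affine functionals $\mu\mapsto\int f\dif\mu$ over the weak-$*$ compact set $\mathcal M(T)$, and $\mathcal M(T)$ varies upper semi-continuously with $T$ in $\mathcal E^1$). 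Combined with uniform convergence $f_n\to f$, $g_n\to g$, $T_n\to T$ (so $g_n\circ T_n\to g\circ T$ uniformly), the defining equality $g_n(x_n)+f_n(x_n)=g_n(T_n x_n)+\beta(f_n,T_n)$ passes to the limit, giving $x\in E_{f,T,g}$. The same argument applied to $T^k(x)=\lim T_n^k(x_n)$ for each fixed $k\ge 0$ shows $x\in\Lambda^*_{f,T,g}$. Finally, to get $x\in\Lambda_{f,T,g}=\bigcap_m T^m(\Lambda^*_{f,T,g})$, for each $m$ I would pick preimages: since $x_n\in\Lambda_{f_n,T_n,g_n}\subset T_n^m(\Lambda^*_{f_n,T_n,g_n})$, write $x_n=T_n^m(y_n)$ with $y_n\in\Lambda^*_{f_n,T_n,g_n}$, pass to a subsequence so $y_n\to y$, deduce $y\in\Lambda^*_{f,T,g}$ as before, and $T^m(y)=x$; hence $x\in T^m(\Lambda^*_{f,T,g})$ for all $m$.

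For (ii), invariance $T(K(f,T))=K(f,T)$ is immediate from $T(\Lambda_{f,T,g})=\Lambda_{f,T,g}$ for each $g$ as noted in the text, so only compactness needs work, and since $K(f,T)$ is a bounded subset of $\R/\Z$ it suffices to show it is closed. The key point is that $G_{f,T}$ is compact in $\mathcal C^0$: by Ma\~n\'e's lemma and the uniform bound \eqref{eqn:Calig}, every $g\in G_{f,T}$ is Lipschitz with $\mathrm{Lip}(g)\le C_0(T)\mathrm{Lip}(f)$, and normalizing (the defining inequalities are insensitive to adding constants, so fix $g(0)=0$) the family is equi-Lipschitz and uniformly bounded, hence $C^0$-precompact by Arzel\`a--Ascoli; the set of calibrated sub-actions is closed under uniform limits by the same limiting argument as in (i). Thus $\{f\}\times\{T\}\times G_{f,T}$ is a compact subset of $\mathcal G$, and since $(f,T,g)\mapsto\Lambda_{f,T,g}$ is upper semi-continuous (part (i)) on this compact set, the union $K(f,T)=\bigcup_{g\in G_{f,T}}\Lambda_{f,T,g}$ is compact — upper semi-continuity plus compactness of the index set yields closedness of the union (if $z_n\in\Lambda_{f,T,g_n}$, $z_n\to z$, pass to a subsequence with $g_n\to g\in G_{f,T}$, then $z\in\Lambda_{f,T,g}\subset K(f,T)$).

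For (iii), I would run essentially the same scheme one level up: take $(f_n,T_n)\to(f,T)$ in $\mathcal C^1\times\mathcal E^1$ and $x_n\in K(f_n,T_n)$ with $x_n\to x$; choose $g_n\in G_{f_n,T_n}$ with $x_n\in\Lambda_{f_n,T_n,g_n}$, use the uniform Lipschitz bound (valid uniformly near $T$ by the remark after \eqref{eqn:Calig}) and normalization to extract a $C^0$-convergent subsequence $g_n\to g$, check $g\in G_{f,T}$, and then invoke part (i) to conclude $x\in\Lambda_{f,T,g}\subset K(f,T)$.

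Part (iv) is where the real content lies and is likely the main obstacle, since it connects the sub-action machinery to maximizing measures. One inclusion is standard: if $\mu\in\mathcal M_{max}(f,T)$ and $g$ is any sub-action, then integrating $g+f\le g\circ T+\beta(f)$ against the invariant $\mu$ forces $\int(g\circ T-g)\dif\mu=0$, yet the integrand is $\ge f-\beta(f)$ which integrates to $0$, so equality $g(x)+f(x)=g(Tx)+\beta(f)$ holds $\mu$-a.e.; thus $\mu(E_{f,T,g})=1$, hence $\mu(\Lambda^*_{f,T,g})=1$ by invariance, and then $\mu(T^m\Lambda^*)=1$ for all $m$ gives $\mu(\Lambda_{f,T,g})=1$, so $\mathrm{supp}(\mu)\subset\Lambda_{f,T,g}\subset K(f,T)$ and $\mu$ is an invariant measure of $T|K(f,T)$. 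For the converse — every invariant measure of $T|K(f,T)$ is maximizing — the point is that on $K(f,T)$ there is (by definition, choosing the calibrated sub-action witnessing a given point, or more robustly by a compactness/exhaustion argument) a single calibrated sub-action $g$ with $K(f,T)\subset E_{f,T,g}$; more carefully, one should argue that $\Lambda_{f,T,g}\subset E_{f,T,g}$ for each $g$ and then that for an invariant measure $\nu$ supported in $\bigcup_g\Lambda_{f,T,g}$ one can find a single $g$ with $\nu(E_{f,T,g})=1$, perhaps using that $G_{f,T}$ is a compact convex set and an extreme-point or ergodic-decomposition reduction to the case $\nu$ ergodic (so $\nu$ sees only one piece $\Lambda_{f,T,g}$). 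Granting $\nu(E_{f,T,g})=1$, integrate the equality $g\circ T-g=f-\beta(f)$ against $\nu$ to get $\int f\dif\nu=\beta(f)$, i.e. $\nu\in\mathcal M_{max}(f,T)$. I expect the delicate step to be justifying that a single calibrated sub-action (rather than a $g$ depending on the orbit) can be used for a given invariant measure; the cleanest route is likely the reduction to ergodic $\nu$ together with the observation that each ergodic $\nu$ on $K(f,T)$ is a weak-$*$ limit of periodic-orbit or Birkhoff averages lying in one $\Lambda_{f,T,g}$, hence is carried by that $E_{f,T,g}$.
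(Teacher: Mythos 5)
Your proposal is correct and follows essentially the same route as the paper in all four parts: the same pass-to-the-limit argument for (i) (the paper likewise uses $\beta(f_n,T_n)\to\beta(f,T)$ without comment; note that upper semi-continuity of $T\mapsto\mathcal{M}(T)$ by itself only gives upper semi-continuity of $\beta$ in $T$ — the reverse inequality follows most cleanly from the calibration identity $\beta(f_n,T_n)=\max_{y\in T_n^{-1}(x_0)}\bigl(g_n(y)+f_n(y)\bigr)-g_n(x_0)$), the same normalization-plus-Arzel\`a--Ascoli compactness of the sub-actions for (ii) and (iii), and the same two inclusions for (iv). The only divergence is in the converse of (iv), where the paper sidesteps the step you flag as delicate: for an ergodic $\mu$ on $K(f,T)$ it picks a single $\mu$-generic point $x\in K(f,T)$, which lies in $\Lambda_{f,T,g}$ for one $g$, and telescopes $\sum_{i=0}^{n-1}f(T^ix)=n\beta(f,T)+g(T^nx)-g(x)$ directly rather than first establishing $\mu(E_{f,T,g})=1$ — though your generic-point/orbit-closure suggestion does resolve that step exactly as you guess.
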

	\begin{proof} 
		(i). Suppose $(f_n,T_n,g_n)\to (f,T,g)$ in $\mathcal{G}$ and let $x_n\in \Lambda_{f_n,T_n,g_n}$ be such that $x_n\to x$. Let us show $x\in \Lambda_{f,T,g}$. 
Indeed, for each integer $l\ge 0$ and each $n\ge 1$, $T_n^l(x_{n})\in E_{f_{n}, T_n, g_{n}}$, so
		$$g_{n}(T_n^l(x_{n}))+f_{n}(T_n^l(x_{n}))= g_{n}(T_n^{l+1}(x_{n}))+\beta(f_{n},T_{n}),$$
		and hence
		$$g(T^l(x))+f(T^l(x))=g(T^{l+1}(x))+\beta(f,T).$$
		This implies that $x\in \Lambda_{f,T,g}$.
		
		(ii). Since $T(\Lambda_{f,T,g})=\Lambda_{f,T,g}$ for all $g\in G_{f,T}$, we have $T(K(f,T))= K(f,T)$. To show that $K(f,T)$ is compact, 
let $\{x_n\}\subset K(f,T)$, so for each $n$, $x_n\in \Lambda_{f,T,g_n}$ for some $g_n\in G_{f,T}$. Fixing an arbitrary $y\in \R/\Z$, let $\tilde{g}_n=g_n-g_n(y)$. Then $\tilde{g}_n\in G_{f,T}$ and $\Lambda_{f,T, g_n}=\Lambda_{f,T,\tilde{g}_n}$. Since $\mathrm{Lip}(\tilde{g}_n)$ is bounded in $n$, passing to a subsequence, we may assume that $\tilde{g}_n$ converges to some $g$ in the $C^0$ topology. Then $g\in G_{f,T}$.  By (i), any accumulation point of $\{x_n\}$ belongs to $\Lambda_{f,T,g}\subset K(f,T)$. Thus $K(f,T)$ is compact.
		
		(iii). Suppose $(f_n,T_n)\to (f,T)$ in $\mathcal{C}^1\times\mathcal{E}^1$ and $x_n\in K(f_n,T_n)$, $x_n\to x$. Then for each $n$ there exists $g_n\in G_{f_n,T_n}$ such that $x_n\in \Lambda_{f_n,T_n, g_n}$. 
		As in the proof of (ii),  $x_n\in \Lambda_{f_n,T_n,\tilde{g}_n}$ for $\tilde{g}_n=g_n-g_n(y)$, and passing to a subsequence, we may assume that $\tilde{g}_n$ converges to some $g$ in the $C^0$ topology. Again $g\in G_{f,T}$. By (i), we obtain 
		$x\in \Lambda_{f,T,g} \subset K(f,T)$.
		
		(iv). Any $\mu\in \mathcal{M}_{\max}(f,T)$ is supported on $\Lambda_{f,T,g}$ for any continuous sub-action $g$ of $f$. So $\mu$ is supported on $K(f,T)$, 
		which implies that $\mu$ is an invariant Borel probability measure of $T|K(f,T)$.
		
For the other direction, it suffices to show each ergodic Borel probability measure $\mu$ of $T|K(f,T)$ satisfies $\int f \dif\mu=\beta(f,T)$. 
		By the ergodicity of $\mu$, there exists $x\in K(f,T)$ such that 
		$$\int f\dif\mu=\lim_{n\to\infty} \frac{1}{n}\sum_{i=0}^{n-1} f(T^i(x)).$$
		Since $x\in K(f,T)$, there exists $g\in G_{f,T}$ with $x\in \Lambda_{f,T,g}$. Thus for each $n\ge 1$
		$$\sum_{i=0}^{n-1} f(T^i(x))= n\beta(f,T)+ g(T^n(x))-g(x),$$
		which implies that $\int f\dif\mu=\beta(f,T)$.
	\end{proof}

	\subsection{Shyness and prevalence}\label{sse:shy}
	
	Let $\mathcal{V}$ be a vector space over $\mathbb{R}$ (not necessarily endowed with a topology). For each positive integer $N$, we say that a subset $S$ of $\mathcal{V}$ is {\em $N$-dimensionally shy} 
	if there exist $\varphi_i\in \mathcal{V}$, $i=1,2,\dots, N$, such that 
$$m_N\left(\left\{(t_i)_{i=1}^N\in [0,1]^N: f+ \sum_{i=1}^N t_i\varphi_i\in S\right\}\right)=0$$
	for each $f\in \mathcal{V}$, where $m_N$ denotes the standard Lebesgue (outer) measure on $[0,1]^N$.  
	An $N$-dimensionally shy set in a Banach space is shy (recall its definition given ahead of Theorem~\ref{thm:main}): we simply take $\mu$ to be the pushforward of 
	the standard Lebesgue measure on the cube $[0,1]^N$ to the compact set 
	$$\left\{\sum_{i=1}^N t_i\varphi_i: (t_i)_{i=1}^N\in [0,1]^N\right\}.$$ It is straightforward to check that $\mu$ is transverse to $S$. 
	A subset $S$ of $\mathcal{V}$ is called {\em finite-dimensionally shy} if it is $N$-dimensionally shy for some positive integer $N$.
	
For the proof of Theorem~\ref{thm:main}, we need the following two propositions. 	
	\begin{prop}\label{prop:shycountable} Let $S$ be a countable union of finite-dimensionally shy subsets of a Banach space $\mathcal{V}$. Then there exists $\{\varphi_n\}_{n=1}^\infty\subset \mathcal{V}$ with $\sum_{n=1}^\infty \|\varphi_n\|<\infty$ such that for any $f\in \mathcal{V}$, 
		$$m_\infty\left(\left\{(t_n)_{n=1}^\infty\in [0,1]^{\mathbb{N}_1}: f+\sum_{n=1}^\infty t_n \varphi_n\in S\right\}\right)=0,$$
		where $m_\infty$ denotes the infinite product of the Lebesgue measure on $[0,1]$.
	\end{prop}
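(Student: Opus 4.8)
The plan is to splice the countably many finite-dimensional families of witnesses into a single absolutely summable sequence and then conclude by Fubini's theorem. Write $S=\bigcup_{k=1}^\infty S_k$, where $S_k$ is $N_k$-dimensionally shy, witnessed by vectors $\psi^{(k)}_1,\dots,\psi^{(k)}_{N_k}\in\mathcal V$. As a preliminary step I would replace each $S_k$ by a Borel superset that is still $N_k$-dimensionally shy with the \emph{same} witnesses. Indeed, let $\pi_k(\mathbf t):=\sum_{i} t_i\psi^{(k)}_i$ and let $\mu_k=(\pi_k)_*m_{N_k}$; as noted just before the statement, $\mu_k$ is transverse to $S_k$, so the definition of shyness furnishes a Borel set $\widetilde S_k\supseteq S_k$ with $\mu_k(\widetilde S_k-v)=0$ for every $v\in\mathcal V$. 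Since $\pi_k^{-1}(\widetilde S_k-v)=\{\mathbf t\in[0,1]^{N_k}:v+\pi_k(\mathbf t)\in\widetilde S_k\}$ and $\mu_k=(\pi_k)_*m_{N_k}$, this says precisely that $\widetilde S_k$ is $N_k$-dimensionally shy with witnesses $\psi^{(k)}_i$. Replacing $S$ by $\bigcup_k\widetilde S_k\supseteq S$, I may assume henceforth that every $S_k$ is Borel.

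For the construction, I first record the elementary fact that if $(\psi_i)_{i=1}^N$ witnesses the $N$-dimensional shyness of a set $A$, then so does $(c_i\psi_i)_{i=1}^N$ for any constants $c_i\in(0,1]$: for fixed $g\in\mathcal V$ the set $\{\mathbf t\in[0,1]^N:g+\sum_i t_ic_i\psi_i\in A\}$ is the image, under the linear isomorphism $(s_i)_i\mapsto(s_i/c_i)_i$, of $\{\mathbf s\in\prod_i[0,c_i]:g+\sum_i s_i\psi_i\in A\}$, which is a subset of the $m_N$-null set $\{\mathbf s\in[0,1]^N:g+\sum_i s_i\psi_i\in A\}$; as linear isomorphisms carry $m_N$-null sets to $m_N$-null sets, the claim follows. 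Now fix a bijection $n\mapsto(k(n),i(n))$ from $\mathbb N$ onto $\{(k,i):k\ge1,\ 1\le i\le N_k\}$ and put $\varphi_n:=c_n\psi^{(k(n))}_{i(n)}$, with $c_n\in(0,1]$ chosen so small that $\|\varphi_n\|\le 2^{-n}$; then $\sum_{n=1}^\infty\|\varphi_n\|\le1$, as required.

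It remains to run the Fubini argument. Fix $f\in\mathcal V$. The map $\Phi\colon[0,1]^{\mathbb N}\to\mathcal V$, $\Phi((t_n)_n):=f+\sum_{n=1}^\infty t_n\varphi_n$, is a uniform limit of continuous maps (its tails are bounded by $\sum_{n>M}2^{-n}$), hence continuous for the product topology, so $A^{(k)}:=\Phi^{-1}(S_k)$ is a Borel subset of $[0,1]^{\mathbb N}$ for each $k$, and $\{(t_n)_n:f+\sum_n t_n\varphi_n\in S\}=\bigcup_k A^{(k)}$. Thus it suffices to show $m_\infty(A^{(k)})=0$ for each $k$. Set $I_k:=\{n\in\mathbb N:k(n)=k\}$, a set of $N_k$ elements on which $n\mapsto i(n)$ is a bijection onto $\{1,\dots,N_k\}$; hence $(\varphi_n)_{n\in I_k}$ is, after reindexing, the family $(\psi^{(k)}_1,\dots,\psi^{(k)}_{N_k})$ rescaled by constants in $(0,1]$, which by the previous paragraph still witnesses the $N_k$-dimensional shyness of $S_k$. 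Identify $[0,1]^{\mathbb N}=[0,1]^{I_k}\times[0,1]^{\mathbb N\setminus I_k}$ with $m_\infty$ the associated product. For fixed $\mathbf u=(u_n)_{n\notin I_k}$, the corresponding slice of $A^{(k)}$ is $\{(t_n)_{n\in I_k}\in[0,1]^{I_k}:g_{\mathbf u}+\sum_{n\in I_k}t_n\varphi_n\in S_k\}$ with $g_{\mathbf u}:=f+\sum_{n\notin I_k}u_n\varphi_n\in\mathcal V$, and this slice is null for the $N_k$-dimensional Lebesgue measure. Fubini's theorem gives $m_\infty(A^{(k)})=0$, and therefore $m_\infty(\{(t_n)_n:f+\sum_n t_n\varphi_n\in S\})\le\sum_k m_\infty(A^{(k)})=0$.

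I do not anticipate a genuine obstacle here: the content is the diagonalization followed by a textbook Fubini computation. The two points that need care are the Borel reduction of the first paragraph — without it Fubini could not be legitimately applied to a possibly non-measurable $S_k$ — and the coordinatewise rescaling invariance of finite-dimensional shyness used to make the $\varphi_n$ summable while preserving the witnessing property on each block $I_k$; both are routine.
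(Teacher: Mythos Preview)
Your proof is correct and follows essentially the same route as the paper: rescale the finite families of witnesses so that their concatenation is absolutely summable, relabel by a single index set, and conclude via Fubini block by block. You are simply more explicit than the paper on two points it leaves implicit, namely the Borel reduction (so that Fubini applies legitimately) and the invariance of the witnessing property under coordinatewise rescaling by constants in $(0,1]$.
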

	\begin{proof} Let $S_j$, $j=1,2,\dots$ be a sequence of finite-dimensionally shy subsets of $\mathcal{V}$. So for each $j$, there exists 
		$\{\varphi_n^j\}_{n=1}^{N_j}\subset\mathcal{V}\setminus \left\lbrace 0\right\rbrace $, such that for each $f\in \mathcal{V}$,
 $$m_{N_j}\left(\left\{(t_n)_{n=1}^{N_j}\in [0,1]^{N_j}: f+\sum_{n=1}^{N_j} t_n \varphi_n^j\in S_j\right\}\right)=0.$$
		Put $\psi_n^j=\frac{1}{2^jN_j}\varphi_n^j/\|\varphi_n^j\|$. Let $\mathcal{N}=\{(n,j): j\in \mathbb{N}_1, n\in \{1,2,\dots, N_j\}\}$. Then $\sum_{(n,j)\in \mathcal{N}} \|\psi_n^j\|=1$. For each $k=1,2,\dots$, $$m_\infty\left(\left\{(t_n^j)_{n,j}\in [0,1]^\mathcal{N}: f + \sum_{n,j} t_n^j \psi_n^j\in S_k\right\}\right)=0 $$
		by Fubini's theorem.  Thus 
		$$m_\infty\left(\left\{(t_{n}^j)_{n,j}\in [0,1]^\mathcal{N}: f + \sum_{n,j} t_n^j \psi_n^j \in \bigcup_k S_k \right\}\right)=0.$$
	\end{proof}

\begin{prop}\label{prop:shylocal}
Let $\mathcal{V}$ be a topological vector space with a countable topological basis. Let $\mathcal{H}$ be a linear subspace of $\mathcal{V}$ and let $S\subset \mathcal{H}$. Suppose that for each $f\in S$, there is a neighborhood $\mathcal{U}$ of $f$ in $\mathcal{V}$ such that $\mathcal{U}\cap S$ is a countable union of finite-dimensionally shy subsets of $\mathcal{H}$. Then $S$ is a countable union of finite-dimensionally shy subsets of $\mathcal{H}$.  
\end{prop}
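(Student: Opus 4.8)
The plan is to extract a countable subcover from the given cover of $S$ by $\mathcal{C}^r$-neighborhoods, using separability of $\mathcal{C}^r$, and then to take the union of the corresponding countably many families of finite-dimensionally shy sets.

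First I would record two elementary facts about the definition of finite-dimensional shyness. Any subset of an $N$-dimensionally shy subset of $\mathcal{H}$ is again $N$-dimensionally shy, witnessed by the same directions $\varphi_1,\dots,\varphi_N$, since passing to a subset only shrinks the ``bad'' parameter set $\{(t_i)\in[0,1]^N : f+\sum_i t_i\varphi_i\in S\}$, which hence stays $m_N$-null for every $f\in\mathcal{H}$. And a countable union of sets, each of which is itself a countable union of finite-dimensionally shy subsets of $\mathcal{H}$, is again a countable union of finite-dimensionally shy subsets of $\mathcal{H}$. Using the first fact, after replacing each neighborhood $\mathcal{U}$ in the hypothesis by an open $\mathcal{C}^r$-ball it contains, I may assume that to each $f\in S$ there is associated an open set $\mathcal{U}_f\subset\mathcal{C}^r$ with $f\in\mathcal{U}_f$ such that $\mathcal{U}_f\cap S$ is a countable union of finite-dimensionally shy subsets of $\mathcal{H}$; here $S\subset\mathcal{H}\subset\mathcal{C}^\omega\subset\mathcal{C}^r$, so the $\mathcal{C}^r$-topology indeed makes sense on $S$.

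Next, since $r$ is a positive integer, $\mathcal{C}^r$ is a separable metric space, hence so is its subspace $S$; in particular $S$ is Lindel\"of in the $\mathcal{C}^r$-subspace topology. The family $\{\mathcal{U}_f\cap S : f\in S\}$ is an open cover of $S$ in that topology, so it has a countable subcover $\{\mathcal{U}_{f_k}\cap S\}_{k=1}^\infty$, whence $S=\bigcup_{k=1}^\infty(\mathcal{U}_{f_k}\cap S)$. Each term is a countable union of finite-dimensionally shy subsets of $\mathcal{H}$, so the second fact above yields that $S$ is a countable union of finite-dimensionally shy subsets of $\mathcal{H}$, as required.

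I do not expect a genuine obstacle here. The one point deserving care is that the local hypothesis lives in the \emph{coarser} $\mathcal{C}^r$ topology rather than in $\mathcal{H}$: one should therefore not try to run the classical ``shyness is a local property'' argument inside $\mathcal{H}$ (which need not be separable, e.g.\ $\mathcal{C}^\omega_{\infty,\rho}$), but should perform the covering step in $\mathcal{C}^r$, whose separability is automatic for finite $r$. Note also that the continuity and dense image of $\mathcal{H}\hookrightarrow\mathcal{C}^1$ are not used in this particular proposition.
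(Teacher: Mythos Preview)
Your proof is correct and follows essentially the same route as the paper's: both exploit that $\mathcal{C}^r$ (for finite $r$) is second countable to reduce the cover of $S$ by $\mathcal{C}^r$-neighborhoods to a countable one, then take the union of the corresponding countable families of finite-dimensionally shy sets. The paper phrases this via a countable basis $(\mathcal{W}_n)$ for $\mathcal{C}^r$, defining $E=\{n:\mathcal{W}_n\cap S\text{ is a countable union of finite-dimensionally shy sets}\}$ and observing $S=\bigcup_{n\in E}(\mathcal{W}_n\cap S)$, whereas you phrase it via the Lindel\"of property of $S$ in the $\mathcal{C}^r$-subspace topology; both rely on the same monotonicity fact you made explicit.
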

	
	\begin{proof} Let $(\mathcal{W}_n)_{n=1}^\infty$ be a countable topological basis for $\mathcal{V}$. Let $E$ be the collection of $n\ge 1$ such that $\mathcal{W}_n\cap S$ is a countable union of finite-dimensionally shy subsets of $\mathcal{H}$. Then 
$$S=\bigcup_{n\in E} (\mathcal{W}_n\cap S),$$ 
and the conclusion follows.
	\end{proof}

	\subsection{The Aubry set}\label{sse:aubry}
	Recall the notation $\Omega(f,T)$ defined ahead of Theorem~\ref{thm:A}, which was introduced in \cite{CLT01} and called the {\em $(f,T)$-non-wandering set} there. We shall call it the {\em Aubry set} of $(f,T)$, following \cite[Definition~4.A]{Gar17}. In~\cite[Proposition~15]{CLT01}, it was proved that for any $(f,T)\in \mathcal{C}^1\times\mathcal{E}^1$, 
	\begin{itemize}
		\item $\Omega(f,T)$ is a non-empty compact set and  $T(\Omega(f,T))=\Omega(f,T)$; 
		\item $\Omega(f,T)$ is contained in $E_{f,T,g}$ for any continuous sub-action $g$ of $(f,T)$ (so $\Omega(f,T)\subset K(f,T)$);
		\item $\Omega(f,T)$ contains the support of each maximizing measure of $(f,T)$.
	\end{itemize} 
	
	For $T\in \mathcal{E}^1$ and a linear subspace $\mathcal{X}$ of $\mathcal{C}^1$, recall $\mathcal{X}_{\#}(T)$ defined in \eqref{eqn:sharp} and let
\begin{equation}\label{eqn:per}
  \mathcal{X}_{\mathrm{per}}(T) =\{f\in \mathcal{X}: (f,T) \text{ has a periodic maximizing measure} \}. 
\end{equation}
By definition, $\mathcal{X}_{\#}(T)\subset \mathcal{X}_{\mathrm{per}}(T)$.

	\begin{thm}\label{thm:permax} Let $T\in \mathcal{E}^1$ and let $\mathcal{X}$ be a linear subspace of $\mathcal{C}^1$ such that all trigonometric polynomials  belong to $\mathcal{X}$. Then we have: 
		\begin{itemize}
			\item [(i)] $\mathcal{X}_{\mathrm{per}}(T)\setminus\mathcal{X}_{\#}(T)$ is a countable union of $1$-dimensionally shy subsets of $\mathcal{X}$;
			\item [(ii)] when $\mathcal{X}$ is a topological vector space, $\mathcal{X}_{\#}(T)$ is dense in $\mathcal{X}_{\mathrm{per}}(T)$.
		\end{itemize}
	\end{thm} 
Note that in assertion (ii), the topology of $\mathcal{X}$ is flexible and it can be irrelevant to the $C^1$ topology.

	\begin{proof} For each $T$-periodic orbit $\mathcal{O}$, let $\mu_{\mathcal{O}}$ denote the averaged Dirac measure  
		on $\mathcal{O}$, and let
		$$\mathcal{X}_{\mathcal{O}}(T)=\{f\in \mathcal{X}: \mu_{\mathcal{O}}\text{ is }(f,T)\text{-maximizing}\}.$$
		To prove (i), since $T$ has only countably many periodic orbits, it suffices to show that for each periodic orbit $\mathcal{O}$ of $T$,
		$\mathcal{X}_{\mathcal{O}}(T)\setminus \mathcal{X}_{\#}(T)$ is a $1$-dimensionally shy subset of $\mathcal{X}$. To prove (ii), it suffices to show that $\mathcal{X}_{\#}(T)\cap \mathcal{X}_{\mathcal{O}}(T)$ is dense in $\mathcal{X}_{\mathcal{O}}(T)$ for each $\mathcal{O}$.
		
To this end, let us follow an argument in \cite{CLT01}. Fix an arbitrary $\mathcal{O}$, and let $\varphi: \R/\Z\to \R$ be a trigonometric polynomial such that $\varphi|\mathcal{O}=0$ and $\varphi(x)<0$ for each $x\in (\R/\Z)\setminus \mathcal{O}$. 
		
		Now let $f_0\in\mathcal{X}$ be arbitrary and define $f_t=f_0+t \varphi$ for $t\in\R$.  
		Then $f_t\in \mathcal{X}$, and let us show that $$\{t\in \R:f_t\in \mathcal{X}_{\mathcal{O}}(T)\setminus \mathcal{X}_{\#}(T)\}$$ contains at most one point.
		Arguing by contradiction, assume that there exist $t_1<t_2$ such that $f_{t_1}, f_{t_2}\in \mathcal{X}_{\mathcal{O}}(T)\setminus \mathcal{X}_{\#}(T).$ Then 
		$$\beta(f_{t_1}, T)=\beta(f_{t_2}, T)=\frac{1}{\#\mathcal{O}}\sum_{p\in \mathcal{O}} f_0(p),$$
where $\#\mathcal{O}$ denotes the cardinality of $\mathcal{O}$. Let $g$ be an arbitrary continuous sub-action of $(f_{t_1},T)$. Then for each $x\in \R/\Z$, 
		$$g(T(x))+\beta(f_{t_1}, T)\ge g(x)+f_{t_1}(x).$$
		Since $\beta(f_{t_2}, T)=\beta(f_{t_1}, T)$ and $f_{t_2}\le f_{t_1}$, 
		$g$ is also a sub-action of $(f_{t_2},T)$. Also note that $f_{t_2}(x)<f_{t_1}(x)$ if $x\notin \mathcal{O}$. It follows that 
		$$\Omega(f_{t_2},T)\subset E(f_{t_2}, T, g)\subset\mathcal{O}.$$ As $\Omega(f_{t_2},T)$ is $T$-invariant and non-empty, this implies that $\Omega(f_{t_2},T)=\mathcal{O}$ is a periodic orbit of $T$, so  $f_{t_2}\in \mathcal{X}_{\#}(T)$, a contradiction. This shows that $\mathcal{X}_{\mathcal{O}}(T)\setminus \mathcal{X}_{\#}(T)$ is $1$-dimensionally shy.
		
		To show that $\mathcal{X}_{\#}(T)\cap \mathcal{X}_{\mathcal{O}}(T)$ is dense in $\mathcal{X}_{\mathcal{O}}(T)$, we observe that if $f_0\in \mathcal{X}_{\mathcal{O}}(T)$, then $f_t\in \mathcal{X}_{\#}(T)\cap \mathcal{X}_{\mathcal{O}}(T)$ for each $t>0$. 
	\end{proof}
	
	\begin{lem}[\cite{CLT01}] \label{lem:omegaK}
		For $(f,T)\in \mathcal{C}^1\times\mathcal{E}^1$, if
\begin{itemize}
  \item [(i)] either $\Omega(f,T)$ is $T$-minimal,
  \item [(ii)] or $(f,T)$ has a unique maximizing measure,
\end{itemize}
then $K(f,T)= \Omega(f,T)$. 
	\end{lem}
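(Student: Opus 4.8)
The inclusion $\Omega(f,T)\subset K(f,T)$ is unconditional (it was recorded above together with the basic properties of $\Omega$), so the task is to prove $K(f,T)\subset\Omega(f,T)$ under either hypothesis. Fix $x\in K(f,T)$; by \eqref{eqn:setK} there is a Lipschitz calibrated sub-action $g$ with $x\in\Lambda_{f,T,g}$. Since $\Lambda_{f,T,g}$ is compact, $T$-invariant and satisfies $T(\Lambda_{f,T,g})=\Lambda_{f,T,g}$, the point $x$ admits a full orbit $\ldots,x_{-2},x_{-1},x_{0}=x,Tx,T^{2}x,\ldots$ lying inside $\Lambda_{f,T,g}\subset E_{f,T,g}$, along which $f-\beta(f)$ is the exact coboundary $g\circ T-g$. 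The first step I would take is to isolate a \emph{common minimal set}. By Proposition~\ref{prop:setK}(iv) every invariant probability measure carried by $K(f,T)$ is maximizing, and the support of every maximizing measure lies in $\Omega(f,T)$; hence any minimal subset $M_{0}$ of $K(f,T)$ carries a maximizing measure of full support, so $M_{0}\subset\Omega(f,T)$. In case (i), minimality of $\Omega(f,T)$ forces $M_{0}=\Omega(f,T)$; in case (ii), $M_{0}$ carries the unique maximizing measure $\mu$, so $M_{0}=\supp(\mu)$, and in particular $\supp(\mu)$ is itself minimal. Either way there is a single minimal set $M\subset\Omega(f,T)$ contained in every minimal subset of $K(f,T)$; consequently $M$ is contained in $\omega(x)$ and also in the set of accumulation points of the backward orbit $\{x_{-m}\}_{m\ge0}$, both being nonempty compact forward-invariant subsets of $K(f,T)$ and hence containing a minimal subset, which can only be $M$.

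Next I would build a closed pseudo-orbit witnessing $x\in\Omega(f,T)$. Fix $\eps>0$, a small $\delta>0$, and a point $b\in M$. Choose $N\ge1$ with $d(T^{N}x,b)<\delta$ (since $b\in M\subset\omega(x)$), an integer $m$ with $d(x_{-m},b)<\delta$ (since $b$ lies in the accumulation set of the backward orbit), and, using minimality of $M$, an integer $\ell\ge1$ with $d(T^{\ell}b,b)<\delta$. Concatenate the three genuine orbit segments
$$x,Tx,\ldots,T^{N-1}x;\qquad b,Tb,\ldots,T^{\ell-1}b;\qquad x_{-m},x_{-m+1},\ldots,x_{-1}$$
into a chain $q_{0},q_{1},\ldots,q_{n-1}$ of length $n=N+\ell+m$, and close it up by $q_{n}:=q_{0}=x=T(x_{-1})$. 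Then $d(Tq_{i},q_{i+1})=0$ for every $i$ except at the two junctions, where it equals $d(T^{N}x,b)<\delta$ and $d(T^{\ell}b,x_{-m})<2\delta$. Because $T$ is expanding it has uniformly contracting inverse branches defined on balls of a uniform radius (after passing to an iterate or to an adapted metric if necessary, with contraction factor $\lambda^{-1}<1$); lifting the closed chain backwards from $q_{n}=x$ by composing such inverse branches produces a genuine $z\in T^{-n}(x)$, and with $a_{i}:=d(T^{i}z,q_{i})$ one has $a_{i}\le\lambda^{-1}\big(a_{i+1}+d(Tq_{i},q_{i+1})\big)$ and $a_{n}=0$, whence $\sum_{i=0}^{n-1}a_{i}$ is bounded by $\frac{1}{\lambda-1}\big(d(T^{N}x,b)+d(T^{\ell}b,x_{-m})\big)=O(\delta)$ \emph{uniformly in} $n$; in particular $d(z,x)=a_{0}=O(\delta)$.

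Finally I would estimate the Birkhoff cost $\sum_{i=0}^{n-1}\big(f(T^{i}z)-\beta(f)\big)$. Along each of the three segments the chain lies in $E_{f,T,g}$ — the segments issued from $x$ and from $x_{-m}$ because $\Lambda_{f,T,g}\subset E_{f,T,g}$, the segment issued from $b$ because $M\subset\Omega(f,T)\subset E_{f,T,g}$ — so $\sum_{i=0}^{n-1}\big(f(q_{i})-\beta(f)\big)$ telescopes on each segment and collapses to the endpoint terms $\big[g(T^{N}x)-g(b)\big]+\big[g(T^{\ell}b)-g(x_{-m})\big]$, which is $O(\mathrm{Lip}(g)\,\delta)$ since $g$ is Lipschitz. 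Comparing with $z$, $\big|\sum_{i=0}^{n-1}\big(f(T^{i}z)-\beta(f)\big)-\sum_{i=0}^{n-1}\big(f(q_{i})-\beta(f)\big)\big|\le\mathrm{Lip}(f)\sum_{i=0}^{n-1}a_{i}=O(\mathrm{Lip}(f)\,\delta)$. Hence $\big|\sum_{i=0}^{n-1}\big(f(T^{i}z)-\beta(f)\big)\big|=O(\delta)$ and $d(z,x)=O(\delta)$, both $<\eps$ once $\delta$ is small; since $n\ge1$, this is precisely the defining condition for $x\in\Omega(f,T)$. As $x\in K(f,T)$ was arbitrary, $K(f,T)\subset\Omega(f,T)$, and the lemma follows.

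I expect the main obstacle to be the cost estimate rather than the combinatorial construction. The naive bound $\big|\sum f(T^{i}z)-\sum f(q_{i})\big|\le n\cdot\omega_{f}(O(\delta))$ is useless because $n$ is large and uncontrolled; what makes the argument work is using simultaneously (a) the Lipschitz regularity of $f$ together with the geometric decay of the shadowing errors $a_{i}$ forced by expansion, so that $\sum_{i}a_{i}=O(\delta)$ independently of $n$, and (b) the fact that along the two \emph{long} orbit segments $f-\beta(f)$ is a genuine coboundary — not merely a function with bounded Birkhoff sums — so those contributions telescope to endpoint terms. The other delicate point is ensuring a genuinely \emph{minimal} common set $M$ in case (ii): it is exactly uniqueness of the maximizing measure that promotes $\supp(\mu)$ to a minimal set, and this is what legitimizes the "transit through $M$" step that uses minimality.
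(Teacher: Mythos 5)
Your argument is correct, and it is genuinely more self-contained than the paper's: the paper proves this lemma by reducing to $\Omega(f,T)\supset\Lambda_{f,T,g}$ and then citing \cite[Proposition~15 (iii)(iv)]{CLT01} (via the notion of $g$-irreducibility for case (i)), whereas you reconstruct the underlying mechanism from scratch. Your reduction to a \emph{unique} minimal set $M$ of $K(f,T)$ is exactly the right way to merge the two hypotheses — in case (i) every minimal subset of $K(f,T)$ is forced to equal $\Omega(f,T)$, and in case (ii) uniqueness of $\mu$ forces every minimal subset to equal $\supp(\mu)$ — and the three ingredients of the closing argument (a closed $\delta$-pseudo-orbit through $M$ built from the forward orbit, a recurrent segment in $M$, and a backward orbit inside $\Lambda_{f,T,g}$; shadowing with $\sum_i a_i=O(\delta)$ uniformly in $n$ via the contracting inverse branches; and telescoping of $f-\beta(f)$ along each genuine segment because all three segments lie in $E_{f,T,g}$) are precisely what makes the Birkhoff cost $O(\delta)$ independently of the length $n$. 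This is in essence the argument of \cite{CLT01}, so what your version buys is transparency rather than novelty. Two small remarks: the middle segment $b,\ldots,T^{\ell-1}b$ is actually redundant, since $d(T^Nx,x_{-m})<2\delta$ already and one may jump directly from $T^Nx$ to $x_{-m}$; and the passage to an adapted metric (or an iterate) to make single inverse branches uniformly contracting deserves one explicit sentence, since with the paper's normalization $|(T^n)'|\ge C_*\lambda_*^n$ the constant $C_*$ may be less than $1$, but this is standard and does not affect the proof.
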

	\begin{proof} It suffices to show that for any (Lipschitz) calibrated sub-action $g$ of $(f,T)$, we have $\Omega(f,T)\supset \Lambda_{f,T,g}$. For case (i), since $\Omega(f,T)$ is $T$-minimal, it is $g$-irreducible (see \cite[Definition~13]{CLT01}), so by \cite[Proposition~15~(iii)]{CLT01}, the inclusion follows. For case (ii), it follows from \cite[Proposition 15 (iii)(iv)]{CLT01}.
	\end{proof}

	\begin{lem}\label{lem:CRfper}  
		The set $\{(f,T)\in \mathcal{C}^1\times \mathcal{E}^1: \Omega(f,T) \text{ is a periodic orbit of T}\}$ is an open subset of $\mathcal{C}^1\times \mathcal{E}^1$. 
	\end{lem}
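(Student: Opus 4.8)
The plan is to establish openness by showing that the set contains a neighbourhood of each of its points. So fix $(f_0,T_0)\in\mathcal C^1\times\mathcal E^1$ such that $\Omega(f_0,T_0)$ is a periodic orbit $O=\{a_0,\dots,a_{k-1}\}$ of $T_0$, with the $a_i$ pairwise distinct and $T_0(a_i)=a_{i+1}$ (indices modulo $k$). Since a single periodic orbit is $T_0$-minimal, Lemma~\ref{lem:omegaK}(i) gives $K(f_0,T_0)=\Omega(f_0,T_0)=O$. From here the argument uses only this identity, the upper semi-continuity of $(f,T)\mapsto K(f,T)$ (Proposition~\ref{prop:setK}(iii)), and the basic properties of $\Omega(f,T)$ recalled above (nonempty, compact, forward invariant, contained in $K(f,T)$).

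First I would fix geometric constants in a fixed order. Because $T_0$ is expanding, a standard compactness argument yields $\rho_0\in(0,1/2)$, a $C^1$-neighbourhood $\mathcal N_0$ of $T_0$ in $\mathcal E^1$, and constants $1<\lambda\le M<\infty$ such that every $T\in\mathcal N_0$ satisfies $\lambda\le|T'|\le M$ on $\R/\Z$ and is injective on every arc of length $\le\rho_0$. Then pick $\rho\in(0,\rho_0/2)$ so that the open arcs $B(a_i,\rho)$, $i=0,\dots,k-1$, are pairwise disjoint, where $B(a,r)$ denotes the open arc of radius $r$ about $a$; next pick $\rho'\in(0,\rho)$ with $2M^{k}\rho'\le\rho$. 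This last choice guarantees that for every $T\in\mathcal N_0$ and every $i$ the arc $T^{j}(B(a_i,\rho'))$ has length at most $\rho$ (hence $\le\rho_0$) for $0\le j\le k$, so $T^{j}$ is injective on $B(a_i,\rho')$ and, since $|T'|\ge\lambda$, expands distances there by a factor $\ge\lambda^{j}$. Finally, using $K(f_0,T_0)=O$ and Proposition~\ref{prop:setK}(iii), I would choose a neighbourhood $\mathcal U\subset\mathcal C^1\times\mathcal N_0$ of $(f_0,T_0)$ so small that $K(f,T)\subset\bigcup_i B(a_i,\rho')$ and $T(B(a_i,\rho'))\subset B(a_{i+1},\rho)$ hold for all $(f,T)\in\mathcal U$ and all $i$; the second inclusion is available for small $\mathcal U$ because then $T(a_i)$ is close to $a_{i+1}$ while $\sup|T'|\le M$.

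The heart of the proof is the dynamical claim that for every $(f,T)\in\mathcal U$ the set $\Omega:=\Omega(f,T)$ is a periodic orbit of $T$. Here $\Omega$ is nonempty (it contains the support of a maximizing measure of $(f,T)$), compact, forward invariant, and $\Omega\subset K(f,T)\subset\bigcup_i B(a_i,\rho')$. A \emph{tube} estimate gives $T(\Omega\cap B(a_i,\rho'))\subset\Omega\cap B(a_{i+1},\rho')$ for each $i$: if $w\in\Omega\cap B(a_i,\rho')$ then $Tw\in\Omega$ lies in $T(B(a_i,\rho'))\subset B(a_{i+1},\rho)$, and the only $B(a_j,\rho')$ meeting $B(a_{i+1},\rho)$ is the one with $j=i+1$ (by $\rho'<\rho$ and pairwise disjointness of the $B(a_l,\rho)$). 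Hence the forward orbit of any point of $\Omega$ visits every $B(a_i,\rho')$, so each $\Omega\cap B(a_i,\rho')$ is nonempty, and $\phi:=T^{k}$ restricted to $C:=\Omega\cap B(a_0,\rho')$ maps the nonempty compact set $C$ into itself. Since $T^{k}$ expands distances on the arc $B(a_0,\rho')$ by a factor $\ge\lambda^{k}>1$ (all arcs involved having length $<1/2$), we get $\diam(\phi(C))\ge\lambda^{k}\diam(C)$; combined with $\phi(C)\subset C$ this forces $\diam(C)=0$, so $C=\{p\}$ with $T^{k}p=p$. Finally, $T^{k-j}$ is injective on $B(a_j,\rho')$ and, by iterating the tube estimate, maps $\Omega\cap B(a_j,\rho')$ into $C=\{p\}$, so $\Omega\cap B(a_j,\rho')$ is the single point $T^{j}p$; therefore $\Omega(f,T)=\{p,Tp,\dots,T^{k-1}p\}$ is a periodic orbit of $T$, and $\mathcal U$ is contained in the set under consideration.

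I expect the only real obstacle to be the bookkeeping in the second paragraph: checking that the smallness conditions on $\rho$, on $\rho'$, and on $\mathcal U$ can be imposed consistently. They can, because the choices are made strictly in that order — $\rho$ depends only on $T_0$, then $\rho'$ only on $\rho,k,M$, then $\mathcal U$ only on the previous data via the upper semi-continuity of $K$ and the continuity of $T\mapsto T'$ — so no circularity arises. The tube estimate, the diameter argument, and the final identification of $\Omega\cap B(a_j,\rho')$ are then routine once the constants are in place.
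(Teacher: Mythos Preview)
Your argument is correct and follows the same route as the paper's: identify $K(f_0,T_0)=\Omega(f_0,T_0)$ via Lemma~\ref{lem:omegaK}, use upper semi-continuity of $K$ to trap $K(f,T)\supset\Omega(f,T)$ in small arcs around the orbit, and then use expansion to force the trapped invariant set to be a single periodic orbit. The paper simply compresses your explicit tube-and-diameter computation into the line ``$\bigcap_{n\ge0}T^{-n}(V)$ is a periodic orbit of $T$'' and applies it to $K(f,T)$ rather than $\Omega(f,T)$.

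One small caveat: the paper's definition of expanding is~\eqref{eqn:C*lambda*}, which only guarantees $|(T^n)'|\ge C_*\lambda_*^n$, not a pointwise bound $|T'|\ge\lambda>1$. Your diameter step $\diam(\phi(C))\ge\lambda^k\diam(C)$ therefore needs a tiny adjustment: either fix $m$ with $C_*\lambda_*^{mk}>1$ first and then choose $\rho'$ with $2M^{mk}\rho'\le\rho$, so that $T^{mk}$ is injective and genuinely expanding on $B(a_0,\rho')$; or simply observe that any two points of $C$ have forward orbits that stay $2\rho'$-close forever (by your tube estimate), and invoke expansiveness of $T$, which follows directly from~\eqref{eqn:C*lambda*}. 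This is purely a bookkeeping correction and does not affect the structure of your proof.
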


	\begin{proof} Let $(f_0,T_0)\in \mathcal{C}^1\times\mathcal{E}^1$ be such that $K_0=\Omega(f_0,T_0)$ is a $T_0$-periodic orbit. By Lemma~\ref{lem:omegaK}, $K_0=K(f_0, T_0)$. Fix a small neighborhood $V$ of $K_0$ in $\R/\Z$. By the upper semi-continuity of  $(f,T)\mapsto K(f,T)$, there is a neighborhood $\mathcal{W}$ of $(f_0,T_0)$ in $\mathcal{C}^1\times \mathcal{E}^1$ such that $K(f,T)\subset V$ for $(f,T)\in \mathcal{W}$. Shrinking $V$ and $\mathcal{W}$ if necessary, we have that $\bigcap_{n=0}^\infty T^{-n}(V)$ is a periodic orbit of $T$. Since $K(f,T)$ is $T$-invariant, $K(f,T)\subset \bigcap_{n=0}^\infty T^{-n}(V)$. Thus $K(f,T)$ is a $T$-periodic orbit. Since $\Omega(f,T)\subset K(f,T)$,  $\Omega(f,T)$ is a periodic orbit of $T$.
	\end{proof}

From the proof of Lemma~\ref{lem:CRfper} we can immediately obtain the following, which is also a direct consequence of \cite[Proposition~16~(i)]{CLT01}.

\begin{cor}\label{cor:lock}
  For $(f,T)\in \mathcal{C}^1\times\mathcal{E}^1$, if $\Omega(f,T)$ is a $T$-periodic orbit, then there exists a $C^1$ open neighborhood $\mathcal{U}$ of $f$ such that for each $\tilde{f}\in \mathcal{U}$, $\Omega(\tilde{f},T)=\Omega(f,T)$; in particular, $\tilde{f}$ has a unique maximizing measure and it is supported on $\Omega(f,T)$.       
\end{cor}

\subsection{Proof of Theorems \ref{thm:A} and \ref{thm:B}}
	
	We shall now deduce Theorems \ref{thm:A} and \ref{thm:B}, assuming Theorem~\ref{thm:main}.

\begin{proof}[Proof of Theorem~\ref{thm:B} assuming Theorem~\ref{thm:main}]
Let $\mathcal{X}=\mathcal{C}^r$, $r=1,2,\dots,\infty$ and let 
$$\mathcal{E}_*^r=\{T\in\mathcal{E}^r: \mathcal{X}_\#(T) \text{ is open and dense in }\mathcal{X}\}.$$
It suffices to show that $\mathcal{E}_*^r\supset \mathcal{E}^\omega$ and that $\mathcal{E}_*^r$ is a residual subset of $\mathcal{E}^r$. Note that by Lemma~\ref{lem:CRfper}, $\mathcal{X}_\#(T)$ is open in $\mathcal{X}$ for each $T\in \mathcal{X}$. Therefore, 
$$\mathcal{E}_*^r=\{T\in\mathcal{E}^r: \mathcal{X}_\#(T) \text{ is dense in }\mathcal{X}\}.$$
		
Let us first show that $\mathcal{E}^\omega\subset \mathcal{E}_*^r$. Let $T\in \mathcal{E}^\omega$. Let $\mathcal{H}=\mathcal{C}^\omega_{1,1}$ be as defined in \eqref{eqn:analytic space}, which is dense in $\mathcal{X}$. By Theorem~\ref{thm:main}, $\mathcal{H}_{\#}(T)$ is a prevalent subset of $\mathcal{H}$, and hence it is dense in $\mathcal{H}$. Thus $\mathcal{X}_\#(T)\supset \mathcal{H}_\#(T)$ is dense in $\mathcal{X}$.
Therefore $T\in \mathcal{E}_*^r$.

To prove that $\mathcal{E}_*^r$ is a residual subset of $\mathcal{E}^r$, choose $T_n\in \mathcal{E}^\omega$, $n=1,2,\dots$, such that $\{T_n\}_{n=1}^\infty$ is dense in $\mathcal{E}^r$. Then for each $n=1,2,\dots$, $\mathcal{X}_\#(T_n)$ is an open and dense subset of $\mathcal{X}$. Thus $\mathcal{D}_\#=\bigcap_{n=1}^\infty\mathcal{X}_\#(T_n)$ is a residual subset of $\mathcal{X}$. Let $\{f_m\}_{m=1}^\infty$ be a dense subset of $\mathcal{D}_\#$.  By Lemma~\ref{lem:CRfper}, for each $m$, there is an open subset $\mathcal{U}_m$ of $\mathcal{E}^r$ containing $\{T_n\}_{n=1}^\infty$ such that $f_m\in\mathcal{X}_\#(T)$ for each $T\in\mathcal{U}_m$. Let $\mathcal{U}=\bigcap_{m=1}^\infty \mathcal{U}_m$. Then $\mathcal{U}$ is a residual subset of $\mathcal{E}^r$. Now let us show that $\mathcal{U}\subset \mathcal{E}_*^r$. Indeed, for each $T\in \mathcal{U}$, $\{f_m\}_{m=1}^\infty\subset\mathcal{X}_\#(T)$, and hence $\mathcal{X}_\#(T)$ is dense in $\mathcal{X}$. Thus $T\in \mathcal{E}_*^r$, which completes the proof.
\end{proof}

To prove Theorem~\ref{thm:A}, let us fix $r$ and $q$ and denote $\mathcal{X}=\mathcal{C}^r$ below, and let $\mathcal{F}$ be the space of $C^r$ families of functions $f_{\mathbf{t}}:\R/\Z\to \R$ parameterized by $\mathbf{t}\in [0,1]^q$, endowed with the $C^r$ topology. An element $(f_{\mathbf{t}})_{\mathbf{t}\in [0,1]^q}\in \mathcal{F}$ will be denoted by $(f_{\mathbf{t}})$ for short. We need the following lemma to replace the role of Lemma~\ref{lem:CRfper} in the proof of Theorem~\ref{thm:B}.
\begin{lem}\label{lem:leb open}
 For each $\eps\in(0,1)$, the set
\[
\left\{((f_{\mathbf{t}}),T)\in  \mathcal{F}\times \mathcal{E}^r : m_q \big(\{\mathbf{t}\in [0,1]^q: f_{\mathbf{t}} \in \mathcal{X}_{\#}(T)\}\big) >\eps \right\}
\]
is open in  $\mathcal{F}\times \mathcal{E}^r$.
\end{lem}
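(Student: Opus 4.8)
The plan is to show that the set in question is open by exhibiting, around any pair $((f_{\mathbf{t}}^0),T_0)$ in the set, a neighborhood on which the Lebesgue measure of the "good" parameter set stays above $\eps$. The key input is Lemma~\ref{lem:CRfper}, which says that $\{(f,T): \Omega(f,T) \text{ is a periodic orbit}\}$ is open in $\mathcal{C}^1\times\mathcal{E}^1$, together with the continuity of the inclusion $\mathcal{X}=\mathcal{C}^r\hookrightarrow\mathcal{C}^1$ and the continuity of the evaluation map $((f_{\mathbf{t}}),\mathbf{t})\mapsto f_{\mathbf{t}}$ from $\mathcal{F}\times[0,1]^q$ to $\mathcal{C}^r$. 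Combining these, the set
$$\mathcal{A}:=\{((f_{\mathbf{t}}),T,\mathbf{t})\in \mathcal{F}\times\mathcal{E}^r\times[0,1]^q: f_{\mathbf{t}}\in\mathcal{X}_\#(T)\}$$
is open in $\mathcal{F}\times\mathcal{E}^r\times[0,1]^q$: indeed it is the preimage of the open set from Lemma~\ref{lem:CRfper} under the continuous map $((f_{\mathbf{t}}),T,\mathbf{t})\mapsto (f_{\mathbf{t}},T)\in\mathcal{C}^1\times\mathcal{E}^1$.

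Given that $\mathcal{A}$ is open, the statement reduces to a soft measure-theoretic fact: if $\mathcal{A}\subset Y\times[0,1]^q$ is open ($Y$ a metric space) and $y_0\in Y$ satisfies $m_q(\{\mathbf{t}: (y_0,\mathbf{t})\in\mathcal{A}\})>\eps$, then the same holds for all $y$ in a neighborhood of $y_0$. To see this, let $A_{y}=\{\mathbf{t}\in[0,1]^q:(y,\mathbf{t})\in\mathcal{A}\}$, which is open in $[0,1]^q$ for each $y$. Choose a compact set $C\subset A_{y_0}$ with $m_q(C)>\eps$ (inner regularity of Lebesgue measure). For each $\mathbf{t}\in C$ there is a product neighborhood $\mathcal{O}_{\mathbf{t}}\times B_{\mathbf{t}}\subset\mathcal{A}$ with $y_0\in\mathcal{O}_{\mathbf{t}}$, $\mathbf{t}\in B_{\mathbf{t}}$; by compactness finitely many $B_{\mathbf{t}_1},\dots,B_{\mathbf{t}_k}$ cover $C$, and for $y$ in the (open) intersection $\mathcal{O}:=\bigcap_{i=1}^k\mathcal{O}_{\mathbf{t}_i}$ we have $C\subset\bigcup_i B_{\mathbf{t}_i}\subset A_y$, whence $m_q(A_y)\ge m_q(C)>\eps$. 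Applying this with $Y=\mathcal{F}\times\mathcal{E}^r$ and $y_0=((f_{\mathbf{t}}^0),T_0)$ gives the desired open neighborhood.

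I do not expect a genuine obstacle here; the only point requiring a little care is the justification that $\mathcal{A}$ is open, i.e.\ that the map $((f_{\mathbf{t}}),T,\mathbf{t})\mapsto(f_{\mathbf{t}},T)$ into $\mathcal{C}^1\times\mathcal{E}^1$ is continuous. This follows because $(\mathbf{t},x)\mapsto f_{\mathbf{t}}(x)$ being $C^r$ (jointly) forces $\mathbf{t}\mapsto f_{\mathbf{t}}\in\mathcal{C}^r$ to be continuous, hence $\mathcal{C}^1$-continuous, and convergence in $\mathcal{F}$ is by definition $C^r$-convergence of the families, which dominates pointwise-in-$\mathbf{t}$ $C^r$-convergence uniformly on the compact parameter cube; composing with the continuous inclusion $\mathcal{C}^r\hookrightarrow\mathcal{C}^1$ and with $T\mapsto T$ finishes it. The measure-theoretic step above is the analogue of Lemma~\ref{lem:CRfper} in this parametrized setting and plays the same role in the proof of Theorem~A that Lemma~\ref{lem:CRfper} plays in the proof of Theorem~B.
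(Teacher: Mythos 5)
Your proof is correct and follows essentially the same route as the paper: the paper also reduces the statement to the openness, for each compact $K\subset[0,1]^q$, of $\{(\mathbf{f},T):f_{\mathbf{t}}\in\mathcal{X}_\#(T)\ \forall\,\mathbf{t}\in K\}$ (via Lemma~\ref{lem:CRfper} and the continuity of evaluation) and then invokes inner regularity of Lebesgue measure. You merely spell out the compactness/product-neighborhood step that the paper leaves implicit.
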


\begin{proof}
For each $((f_{\mathbf{t}}), T)\in \mathcal{F}\times \mathcal{E}^r$, let 
$$\mathbf{L}((f_{\mathbf{t}}), T)=\{\mathbf{t}\in [0,1]^q: f_{\mathbf{t}} \in \mathcal{X}_{\#}(T)\}.$$
By Lemma~\ref{lem:CRfper}, for any compact subset $K$ of $[0,1]^q$, 
$$\{(\mathbf{f},T)\in \mathcal{F}\times \mathcal{E}^r: \mathbf{L}(\mathbf{f}, T)\supset K\}$$
is an open subset of $\mathcal{F}\times \mathcal{E}^r$. 
 
Now suppose that $m_q\big(\mathbf{L}(\mathbf{f}, T))>\eps$ for some $(\mathbf{f}, T)\in \mathcal{F}\times \mathcal{E}^r$. Then there exists a compact set $K\subset \mathbf{L}(\mathbf{f}, T)$ such that $m_q(K)>\eps$. Thus if $(\tilde{\mathbf{f}}, \tilde{T})$ is sufficiently close to $(\mathbf{f}, T)$ in $\mathcal{F}\times \mathcal{E}^r$, then $\mathbf{L}(\tilde{\mathbf{f}}, \tilde{T}) \supset K$ and hence $m_q(\mathbf{L}(\tilde{\mathbf{f}}, \tilde{T}))>\eps$. The conclusion follows.
\end{proof}

	\begin{proof}[Proof of Theorem~\ref{thm:A} assuming Theorem~\ref{thm:main}] 
For each integer $k\ge 1$ and each $T\in \mathcal{E}^r$, let
		$$\mathcal{F}_k(T)=\left\{(f_{\mathbf{t}})\in \mathcal{F}: m_q\big(\{\mathbf{t}\in [0,1]^q: f_{\mathbf{t}} \in \mathcal{X}_{\#}(T) \}\big)>1-\tfrac{1}{k}\right\},$$
and let 
$$\mathcal{F}_*(T)=\left\{(f_{\mathbf{t}})\in \mathcal{F}: m_q \big(\{\mathbf{t}\in [0,1]^q: f_{\mathbf{t}} \in \mathcal{X}_{\#}(T) \} \big) = 1 \right\}.$$
		Let 
		$$\mathcal{E}_k^r=\{T\in\mathcal{E}^r: \mathcal{F}_k(T) \text{ is a residual subset of } \mathcal{F}\}$$
		for each $k$, and let 
		$$\mathcal{E}_*^r=\{T\in\mathcal{E}^r: \mathcal{F}_*(T) \text{ is a residual subset of } \mathcal{F}\}.$$
		Then for each $T\in\mathcal{E}^r$,
		$$\mathcal{F}_*(T)=\bigcap_{k=1}^\infty \mathcal{F}_k(T).$$
		As a result,
		$$\mathcal{E}_*^r= \bigcap_{k=1}^\infty \mathcal{E}_k^r.$$
		So it suffices to show that for each $k$, $\mathcal{E}_k^r\supset \mathcal{E}^\omega$ and $\mathcal{E}_k^r$ is a residual subset of $\mathcal{E}^r$.

		Let us first show that $\mathcal{E}_k^r \supset \mathcal{E}^\omega$. Given $T\in \mathcal{E}^\omega$, it suffices to show that $\mathcal{F}_k(T)$ is open and  $\mathcal{F}_*(T)$ is dense in $\mathcal{F}$. By Lemma~\ref{lem:leb open}, $\mathcal{F}_k(T)$ is open. 
		It remains to show that $\mathcal{F}_*(T)$ is dense in $\mathcal{F}$. To this end, let $\mathcal{H}=\mathcal{C}^\omega_{1,1}$ be as defined in \eqref{eqn:analytic space}. By Theorem~\ref{thm:main}, there exists $\{\varphi_n\}_{n=1}^\infty\subset \mathcal{H}$, such that $\sum_{n=1}^\infty \|\varphi_n\|_{\mathcal{H}}<\infty$ and such that for each $f\in \mathcal{H}$, 
		$$m_\infty \left(\left\{(t_n)_{n=1}^\infty \in [0,1]^{\mathbb{N}_1}: f+\sum_{n=1}^\infty t_n\varphi_n\not\in \mathcal{H}_\#(T)\right\}\right)=0.$$
		Given any $C^r$ family $(f_{\mathbf{s}})\in \mathcal{F}$, we may approximate it in the $C^r$ topology by a family $(\tilde{f}_{\mathbf{s}})\in \mathcal{F}$ with $\tilde{f}_{\mathbf{s}}\in \mathcal{H}$. Then for each $\mathbf{s}\in [0,1]^q$, 
		$$m_\infty\left(\left\{(t_n)_{n=1}^\infty \in [0,1]^{\mathbb{N}_1}: \tilde{f}_{\mathbf{s}} + \sum_{n=1}^\infty t_n\varphi_n \not\in \mathcal{H}_{\#}(T)\right\}\right)=0.$$
		By Fubini's theorem, for $m_\infty$-a.e. $(t_n)_{n=1}^\infty$, 
		$$m_q\left(\left\{\mathbf{s}\in [0,1]^q: \tilde{f}_{\mathbf{s}} +\sum_{n=1}^\infty t_n\varphi_n\not\in \mathcal{H}_{\#}(T)\right\}\right)=0.$$ 
		For any such $(t_n)_{n=1}^\infty$, $(\tilde{f}_{\mathbf{s}}+ \sum_{n=1}^\infty t_n \varphi_n)$ is a family in $\mathcal{F}_*(T)$. Since the topology on $\mathcal{H}$ is stronger than the $C^r$ topology, this implies that  $\mathcal{F}_*(T)$ is dense in $\mathcal{F}$.

		Now let us turn to the proof that $\mathcal{E}_k^r$ is a residual subset of $\mathcal{E}^r$. The argument is similar to the corresponding part in the proof of Theorem~\ref{thm:B}. To begin with, let $\{T_n\}_{n=1}^\infty\subset \mathcal{E}^\omega$ be dense in $\mathcal{E}^r$. For each $n$, since $T_n\in \mathcal{E}^\omega$, $\mathcal{F}_k(T_n)$ is a residual subset of $\mathcal{F}$. Thus 
		$$\mathcal{F}_k:=\bigcap_{n=1}^\infty \mathcal{F}_k(T_n)$$ 
		is a residual subset of $\mathcal{F}$. Let $\{\mathbf{f}_m\}_{m=1}^\infty$ be a dense subset of $\mathcal{F}_k$. For each $m$, consider
		$$\mathcal{E}^r_{k,m}:=\{T\in \mathcal{E}^r: \mathbf{f}_m \in \mathcal{F}_k(T)\}.$$
		By Lemma~\ref{lem:leb open}, $\mathcal{E}^r_{k,m}$ is open in $\mathcal{E}^r$. On the other hand, since $T_n\in \mathcal{E}^r_{k,m}$ for each $n$, and since $\{T_n\}_{n=1}^\infty$ is dense in $\mathcal{E}^r$, $\mathcal{E}^r_{k,m}$ is dense in $\mathcal{E}^r$. Therefore, $\bigcap_{m=1}^\infty \mathcal{E}^r_{k,m}$ is a residual subset of $\mathcal{E}^r$. For each $T\in \bigcap_{m=1}^\infty \mathcal{E}^r_{k,m}$, $\mathcal{F}_k(T)$ is open in $\mathcal{F}$ and contains $\{\mathbf{f}_m\}_{m=1}^\infty$, so $\mathcal{F}_k(T)$ is a residual subset of $\mathcal{F}$. Thus 
		$$\mathcal{E}^r_k\supset \bigcap_{m=1}^\infty \mathcal{E}^r_{k,m}$$
		is a residual subset of $\mathcal{E}^r$. 
	\end{proof}

\section{Sturmian-like property and Supercritical values}\label{sec:sp}

The rest of the paper is devoted to the proof of Theorem~\ref{thm:main}. In this section, we introduce the notions {\em Sturmian-like} property and {\em supercritical value} and discuss their relations. The main result is Proposition~\ref{prop:scstu}.

In this section, we consider a fixed map $T\in\mathcal{E}^1$. We assume that  
$$T(\pi(0))=\pi(0),$$
where $\pi:\R\to\R/\Z$ is the natural projection. Note that this is not an essential assumption since any expanding circle map can be conjugate to a map fixing $\pi(0)$ via a translation. 

Let $\whT:\mathbb{R}\to\mathbb{R}$ denote the unique lift of $T$ with $\whT(0)=0$ via $\pi$, and let $\tau=\whT^{-1}$. 
Let $d_T\ge 2$ be the absolutely value of the degree of $T$ and let $\A=\{0,1,\dots, d_T-1\}$. For each $i\in \A$ and each $x\in\R$, define
$$
\tau_i(x)=\left\{\begin{array}{ccl}
  \tau(x+i) &,& \text{if $T$ is orientation-preserving} \\
  \tau(x+i) + 1 &,& \text{if $T$ is orientation-reversing}
\end{array}\right..
$$
Note that by definition, 
$$T^{-1}(\pi(x))=\{\pi(\tau_i(x)):i\in\A\}, \quad \forall x\in \R,$$
and
$$\tau_i((0,1))\subset (0,1), \quad \forall i\in\A.$$ 
For each $\mathbf{i}=(i_n)_{n\ge 1}\in \A^{\mathbb{N}_1}$ and each $n\ge 1$, let
\begin{equation}\label{eqn:tauin}
	\tau_{\mathbf{i},n}=\tau_{i_n}\circ \tau_{i_{n-1}}\circ \dots \circ\tau_{i_1}.
\end{equation}
For each H\"older continuous function $f: \R/\Z\to \R$, define 
\begin{equation}\label{eqn:hi}
	h^f_{\mathbf{i}}(x):= \sum_{n=1}^\infty \left(\widehat{f}\circ \tau_{\mathbf{i},n}(x)-\widehat{f}\circ \tau_{\mathbf{i},n}(0)\right), \quad x\in\mathbb{R},
\end{equation}
where $\widehat{f}=f\circ \pi.$ Note that the series on the right hand side converges absolutely. 

\medskip
Let $K$ be a non-empty compact subset of $\R/\Z$ such that $T(K)=K$. Let $\# E$ denote the cardinality of a set $E$ from now on.

\begin{defn}\label{defn:sturlike} 
We say that $x\in K$ is a {\em critical value} of $K$ (or $T|K$), if 
$$\#\left(T^{-1}(x)\cap K\right)>1.$$ 
A critical value $x$ is called {\em regular}, if there exists a connected neighborhood $U$ of $x$ in $\R/\Z$ such that for each connected component $B$ of $U\setminus \{x\}$, the following hold: 
\begin{itemize}
  \item there is a continuous map $\tau_B: B\to\R/\Z$ such that $T\circ \tau_B=\mathrm{id}_B$; 
  \item if $y\in K\cap B$, then $\tau_B(y)$ is the unique point in $T^{-1}(y)\cap K$.
\end{itemize}
We say that $K$ (or $T|K$) is {\em Sturmian-like} if all of its critical values are regular.
\end{defn}

{\em Remark.} Recall that a Sturmian measure $\mu$ is a $T_2$-invariant probability measure supported in a semicircle, where $T_2(x)=2x\mod 1$. These measures appear in Bousch's work \cite{Bou00} on ergodic optimization for $f_t(x)=\cos(2\pi (x-t))$. The support of a Sturmian measure is Sturmian-like for $T_2$ with at most one critical value. As a referee pointed out to us, the Sturmian-like property is  closely related to the notion of ``flower" studied in \cite{Bre06E,Bre06N,HJ07}. A set $F\subset \R/\Z$ is called a {\em flower} of the expanding circle map $T$ if $F$ is a union of finitely many non-degenerate closed arcs, $T(F)=\R/\Z$, and the restriction of $T$ to the interior of $F$ is injective; see \cite[Remark~2.3~(c)]{HJ07}. It can be easily checked that if a non-empty compact set $K$ with $T(K)=K$ is contained in some flower $F$, then $K$ is Sturmian-like, its critical values are contained in $T(\partial F)$ and each critical value has exactly two pre-images in $K$. On the other hand, as shown in ~\cite[Section 2.3]{Bro26}, if $K$ is a Sturmian-like set such that the non-wandering set of $T:K\to K$ is equal to $K$, then $K$ is contained in a flower.  However, when $d_T\ge 3$, in general a Sturmian-like set $K$ may not be contained in any flower, because a critical value of $K$ may have three or more pre-images (see Example~\ref{exa:Sturmian-like}), even though at most two of the pre-images can be limit points of $K$.

\begin{exa}\label{exa:Sturmian-like} 
  Let $T_q(x)=qx \mod 1$ for $q\ge 2$. Given $x_0\in \R/\Z$, let $I_{x_0}$ denote the closed arc $\{x_0+\pi(t):0\le t\le 1/q\}$. Then $S_{x_0}:=\cap_{n=0}^{\infty}T_q^{-n} (I_{x_0})$ is Sturmian-like. Moreover, either $S_{x_0}$ is a periodic orbit, or $S_{x_0}$ has exactly one critical value $T_q(x_0)$. See \cite[Appendix~A]{FSS21} for more details.

Now Suppose $q\ge 3$. Take $x_0$ such that $S_{x_0}$ is not a periodic orbit. Then $c:=T_q(x_0)\in S_{x_0}$.  Choose $c_{-1}\in T^{-1}(c)\setminus S_{x_0}$. Given a fixed point $b$ of $T_q$, we can find a backward orbit $(c_{-n})_{n\ge 1}$ of $T_q$ such that $\lim_{n\to\infty}c_{-n}=b$. Then 
  $$K=S_{x_0}\cup\{c_{-n}: n\ge 1\}\cup\{b\}$$ 
is Sturmian-like and $c$ is a critical value of $K$ having exactly three pre-images in $K$.
\end{exa}

For each $x\in \mathbb{R}$, let $\mathcal{CL}(x, K)$ (``$\mathcal{CL}$" stands for calibrated) denote the collection of $\mathbf{i}\in \A^{\mathbb{N}_1}$ such that
$$\pi(\tau_{\mathbf{i}, n}(x))\in K, \,\, \forall n\ge 1.$$
Note that since $T(K)=K$, $\mathcal{CL}(x,K)\not=\emptyset$ for each $x\in \pi^{-1}(K)$.

\begin{defn}\label{defn:sc}
	For each $f\in \mathcal{C}^1$, we say that $x\in \pi^{-1}(K)$ is an {\em $f$-supercritical value of $K$} if there exist 
	$\mathbf{i}, \mathbf{j}\in \mathcal{CL}(x, K)$ with distinct leading coordinates such that 
	$$ \Dif h^f_{\mathbf{i}}(x)=\Dif h^f_{\mathbf{j}}(x),$$
	where $h^f_{\mathbf{i}}, h^f_{\mathbf{j}}$ are as in \eqref{eqn:hi} and $\Dif$ stands for taking derivative.
\end{defn}

Observe that $x\in\R$ is an $f$-supercritical value of $K$ if and only if so is $x+m$, $m\in \Z$. Therefore it makes sense to say that a point in $K$ is an {\em $f$-supercritical value} of $K$. To verify the observation, it suffices to consider the case $m=1$. First note that there exists a bijection $i\mapsto i'$ from $\A$ to itself such that $\tau_{i'}(x+1)-\tau_i(x)\in \Z$ for any $x\in\R$. 
As a result, given $\mathbf{i}\in\A^{\mathbb{N}_1}$, by induction on $n$, we can find a unique $\mathbf{i}'\in\A^{\mathbb{N}_1}$ such that 
$$\tau_{\mathbf{i}',n}(x+1) - \tau_{\mathbf{i},n}(x) \in \Z, \quad \forall x\in\R, \forall n\ge 1.$$
Then $\Dif h^f_{\mathbf{i}}(x)=\Dif h^f_{\mathbf{i}'}(x+1)$, and $\mathbf{i}\in \mathcal{CL}(x, K)$ if and only if  $\mathbf{i}'\in \mathcal{CL}(x+1, K)$. Moreover, if $\mathbf{i},\mathbf{j}\in\A^{\mathbb{N}_1}$ have distinct leading coordinates, so do the correspondent $\mathbf{i}',\mathbf{j}'$. The conclusion follows.

We shall use repeatedly the following fact (see~\cite[Lemma 3.5]{GS24}). For completeness let us reproduce its proof here. Recall $E_{f,T,g}$ defined by \eqref{eqn:Efg}.
\begin{lem}\label{lem:holonomy}
	Let $g$ be a continuous sub-action of $f\in\mathcal{C}^1$. Let $K$ be a non-empty compact subset of $\R/\Z$ with $T(K)=K\subset E_{f,T,g}$. For any $x\in \pi^{-1}(K)$, any $y\in \R$ and any $\mathbf{i}\in \mathcal{CL}(x,K)$, we have
	\[
	g(\pi(y))-g(\pi(x))\ge h_{\mathbf{i}}^f(y)-h_{\mathbf{i}}^f(x).
	\]
\end{lem}

\begin{proof}
 Since $g$ is a sub-action of $f$, for each $N\ge 1$ we have:
$$g(\pi(y))\ge \sum_{n=1}^N  \widehat{f}(\tau_{\mathbf{i},n}(y)) + g(\pi(\tau_{\mathbf{i},N}(y))).$$
Moreover, since $\mathbf{i}\in \mathcal{CL}(x,K)$, we have:
$$g(\pi(x)) = \sum_{n=1}^N  \widehat{f}(\tau_{\mathbf{i},n}(x)) + g(\pi(\tau_{\mathbf{i},N}(x))).$$
Subtracting the two displayed expressions above first and then letting $N\to \infty$, the conclusion follows.
\end{proof}

In the following $\A$ is endowed with discrete topology and $\A^{\mathbb{N}_1}$ with product topology accordingly; consequently, $\mathcal{CL}(x,K)$ introduced before is always compact. 

\begin{prop}\label{prop:scstu} Let $f,g$ and $K$ be as in Lemma~\ref{lem:holonomy}. If $K$ has no $f$-supercritical value, then $K$ is Sturmian-like.
\end{prop}

\begin{proof} Arguing by contradiction, assume that $K$ is not Sturmian-like. Then there exists $x_0\in \pi^{-1}(K)$ such that it is a critical value of $K$ but not regular. As a result, there exist two sequences $x_n, y_n\in \pi^{-1}(K)$,  both converging to $x_0$ from the same side (say from the right hand side), and distinct $i_1, j_1\in \A$, such that for each $n$,
	$$\tau_{i_1}(x_n), \tau_{j_1}(y_n)\in \pi^{-1}(K).$$
	As $T(K)=K$, there exist 
$$\mathbf{i}^n=(i^n_k)_{k\ge 1}\in \mathcal{CL}(x_n,K) \quad\text{and}\quad
	\mathbf{j}^n=(j^n_k)_{k\ge 1}\in \mathcal{CL}(y_n, K)$$ such that  $i^n_1=i_1\ne j_1=j^n_1$.
	Passing to a subsequence, we may assume $\mathbf{i}^n\to\mathbf{i}$ and $\mathbf{j}^n\to\mathbf{j}$.
	By continuity, $\mathbf{i},\mathbf{j}\in \mathcal{CL}(x_0,K)$.
	By Lemma~\ref{lem:holonomy},
	$$\int_{x_0}^{x_n} \Dif h^f_{\mathbf{i}^n}(t)\dif t\ge g(\pi(x_n))-g(\pi(x_0))\ge \int_{x_0}^{x_n} \Dif h^f_{\mathbf{j}}(t) \dif t,$$
	$$\int_{x_0}^{y_n} \Dif h^f_{\mathbf{j}^n}(t) \dif t\ge g(\pi(y_n))-g(\pi(x_0))\ge \int_{x_0}^{y_n} \Dif h^f_{\mathbf{i}}(t)\dif t.$$
	 Dividing the first inequality by $x_n-x_0$ and then letting $n\to\infty$ gives
	$$\Dif h^f_{\mathbf{i}}(x_0)\ge \Dif h^f_{\mathbf{j}}(x_0).$$
Here we used the fact that as $n\to\infty$, $\Dif h^f_{\mathbf{i}^n}(t)$ converges to $\Dif h^f_{\mathbf{i}}(t)$ locally uniformly in $t$. Similarly, the second gives
	$$\Dif h^f_{\mathbf{j}}(x_0)\ge \Dif h^f_{\mathbf{i}}(x_0).$$
	Thus $$\Dif h^f_{\mathbf{j}}(x_0)= \Dif h^f_{\mathbf{i}}(x_0).$$
Since $i_1\ne j_1$, this implies that $x_0$ is an $f$-supercritical value, a contradiction.
\end{proof}

Recall $K(f,T)$ defined in \eqref{eqn:setK}. We write $K(f)=K(f,T)$ for short hereafter. Also recall that by Proposition~\ref{prop:setK}, for each $f\in\mathcal{C}^1$, $T(K(f))=K(f)$, the maximizing measures of $(f,T)$ are exactly the $T$-invariant Borel probability measures supported on $K(f)$, 
and $K(f)$ depends on $f$ in an upper semi-continuous manner. We say that $f\in\mathcal{C}^1$ is {\em supercritical} if $K(f)$ has an  $f$-supercritical value.

\begin{cor}\label{cor:stscK} Assume that $(f,T)$ has a unique maximizing measure. Then either $K(f)$ is Sturmian-like or $f$ is supercritical. 
\end{cor}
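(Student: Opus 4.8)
The plan is to obtain this immediately from Proposition~\ref{prop:scstu}, once the set $K(f)$ has been identified with the non-wandering set $\Omega(f,T)$. So I would argue by the contrapositive: assume that $f$ has no supercritical value, and show that $K(f)$ is Sturmian-like. By the definition just preceding the corollary, the assumption says exactly that $K(f)=K(f,T)$ has no $f$-supercritical value. By Proposition~\ref{prop:setK}, $K(f)$ is a compact set with $T(K(f))=K(f)$, and it is non-empty since it contains the support of the (unique) maximizing measure of $(f,T)$. Thus, to invoke Proposition~\ref{prop:scstu} with $K=K(f)$, the only remaining hypothesis to check is the existence of a continuous sub-action $g$ of $(f,T)$ with $K(f)\subset E_{f,T,g}$.

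This is where the hypothesis that $(f,T)$ has a unique maximizing measure is used. First, by Ma\~n\'e's lemma (recalled in \S\ref{subsec:subact}), $(f,T)$ admits a Lipschitz calibrated sub-action $g$. Second, by Lemma~\ref{lem:omegaK}, the uniqueness of the maximizing measure gives $K(f)=\Omega(f,T)$. Third, by the properties of the $(f,T)$-non-wandering set recalled in \S\ref{sec:pre} (from \cite[Proposition~15]{CLT01}), $\Omega(f,T)\subset E_{f,T,g}$. Combining these three facts, $K(f)=\Omega(f,T)\subset E_{f,T,g}$. Now Proposition~\ref{prop:scstu} applies directly and yields that $K(f)$ is Sturmian-like, which completes the proof.

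The argument is short, and there is essentially no real obstacle beyond one bookkeeping point worth emphasizing: in general $K(f)=\bigcup_{g\in G_{f,T}}\Lambda_{f,T,g}$ is a union over \emph{all} calibrated sub-actions, and a priori there is no single sub-action $g$ for which this whole union lies in $E_{f,T,g}$; it is precisely the identification $K(f)=\Omega(f,T)$, valid here by Lemma~\ref{lem:omegaK} thanks to the uniqueness assumption, together with the containment $\Omega(f,T)\subset E_{f,T,g}$ for \emph{every} continuous sub-action $g$, that makes the hypotheses of Proposition~\ref{prop:scstu} available. Everything else is a direct quotation of results already established.
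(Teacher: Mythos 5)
Your proof is correct and follows essentially the same route as the paper's: both use Lemma~\ref{lem:omegaK}(ii) to identify $K(f)$ with $\Omega(f,T)$, then the containment $\Omega(f,T)\subset E_{f,T,g}$ for a calibrated sub-action $g$, and conclude by Proposition~\ref{prop:scstu}. Your remark about why the identification $K(f)=\Omega(f,T)$ is the essential point (since $K(f)$ is a priori a union over all calibrated sub-actions) is exactly the right observation.
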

\begin{proof} By Lemma~\ref{lem:omegaK}, $K(f)=\Omega(f,T)$, so for each continuous sub-action $g$ of $(f,T)$, $K(f)\subset E_{f,T,g}$. Alternatively, since $(f,T)$ has a unique maximizing measure, it is well-known that there exists a unique calibrated sub-action $g$ of $(f,T)$ up to an additive constant (see \cite[Lemme~C]{Bou00} or \cite[Proposition~6.7]{Gar17}), so $K(f)=\Lambda_{f,T,g}\subset E_{f,T,g}$ for this $g$. Thus the conclusion follows from Proposition~\ref{prop:scstu} applied to $K=K(f)$.
\end{proof}

Let us end this section with the following remark, and this fact will not be (explicitly) used in this paper.  

\begin{prop} Being not supercritical is an open property in $\mathcal{C}^1$. 
\end{prop}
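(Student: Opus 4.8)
The plan is to show that the complement $\mathcal{N}$ of $\{f\in\mathcal{C}^1: f\text{ has no supercritical value}\}$ is closed in $\mathcal{C}^1$; equivalently, if $f_n\to f$ in $\mathcal{C}^1$ and each $f_n$ has a supercritical value, then so does $f$. For each $n$, pick $y_n\in\pi^{-1}(K(f_n))$ together with $\mathbf{i}^{(n)},\mathbf{j}^{(n)}\in\mathcal{CL}(y_n,K(f_n))$ whose leading coordinates $a_n\ne b_n$ differ, and such that $\Dif h^{f_n}_{\mathbf{i}^{(n)}}(y_n)=\Dif h^{f_n}_{\mathbf{j}^{(n)}}(y_n)$. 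Using the identity $h^f_{\mathbf{i}}(x+m)-h^f_{\mathbf{i}}(m)=h^f_{\mathbf{i}+\iota(m)}(x)$ (and the fact that being a supercritical value is invariant under integer translation), I may normalize $y_n$ to lie in a fixed fundamental domain, so that after passing to a subsequence $y_n\to y$. Since $\A$ is finite we may also assume $a_n\equiv a$, $b_n\equiv b$ with $a\ne b$, and since $\A^{\mathbb N}$ is compact we may assume $\mathbf{i}^{(n)}\to\mathbf{i}$, $\mathbf{j}^{(n)}\to\mathbf{j}$ in $\A^{\mathbb N}$; the limits then have leading coordinates $a\ne b$.

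Next I need three convergence facts. First, by the upper semi-continuity of $f\mapsto K(f)$ (Proposition~\ref{prop:setK}(iii)) and the fact that $K(f)$ is closed, $y_n\in\pi^{-1}(K(f_n))$ with $y_n\to y$ forces $y\in\pi^{-1}(K(f))$. Second, $\mathbf{i}\in\mathcal{CL}(y,K(f))$: for each fixed $k$, $\pi(\tau_{\mathbf{i}^{(n)},k}(y_n))\in K(f_n)$, and since $\tau_{\mathbf{i}^{(n)},k}(y_n)\to\tau_{\mathbf{i},k}(y)$ (the composition $\tau_{i_k}\circ\cdots\circ\tau_{i_1}$ depends continuously on the finitely many symbols $i_1,\dots,i_k$ and on the point, and for $n$ large the first $k$ symbols of $\mathbf{i}^{(n)}$ equal those of $\mathbf{i}$), upper semi-continuity of $K$ again gives $\pi(\tau_{\mathbf{i},k}(y))\in K(f)$; likewise $\mathbf{j}\in\mathcal{CL}(y,K(f))$. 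Third, and most importantly, $\Dif h^{f_n}_{\mathbf{i}^{(n)}}(y_n)\to\Dif h^{f}_{\mathbf{i}}(y)$. This uses the explicit series: $\Dif h^f_{\mathbf{i}}(x)=\sum_{k\ge1}\widehat f\,'(\tau_{\mathbf{i},k}(x))\,\tau_{\mathbf{i},k}'(x)$, the contraction estimate $|\tau_{\mathbf{i},k}'(x)|\le C_*^{-1}\lambda_*^{-k}$ coming from \eqref{eqn:C*lambda*}, and the uniform convergence $\widehat{f_n}\,'\to\widehat f\,'$ (which is exactly $C^1$ convergence of $f_n\to f$). Since the tails of the series are bounded by $\|f_n\|_{C^1}\cdot C_*^{-1}\lambda_*^{-k}/(1-\lambda_*^{-1})$ uniformly in $n$, and each finite partial sum converges termwise (again because the first many symbols of $\mathbf{i}^{(n)}$ stabilize and $\widehat{f_n}\,',y_n$ converge), the desired convergence follows.

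Combining, we pass to the limit in $\Dif h^{f_n}_{\mathbf{i}^{(n)}}(y_n)=\Dif h^{f_n}_{\mathbf{j}^{(n)}}(y_n)$ to obtain $\Dif h^f_{\mathbf{i}}(y)=\Dif h^f_{\mathbf{j}}(y)$ with $y\in\pi^{-1}(K(f))$ and $\mathbf{i},\mathbf{j}\in\mathcal{CL}(y,K(f))$ having distinct leading coordinates $a\ne b$. Hence $y$ is an $f$-supercritical value of $K(f)$, i.e.\ $f$ has a supercritical value, so $\mathcal{N}$ is closed and its complement is open, as claimed.

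The main obstacle, and the place requiring genuine care, is the joint convergence $\Dif h^{f_n}_{\mathbf{i}^{(n)}}(y_n)\to\Dif h^f_{\mathbf{i}}(y)$: one is simultaneously varying the performance function, the base point, and the (infinite) symbol sequence, so one must be careful that the uniform geometric tail bound on the series \eqref{eqn:hi} holds with a constant independent of $n$ — this is where the uniform expansion constants $C_*,\lambda_*$ of the fixed map $T$ and the boundedness of $\|f_n\|_{C^1}$ (a convergent sequence is bounded) enter — and that on each finite prefix the symbols of $\mathbf{i}^{(n)}$ have stabilized to those of $\mathbf{i}$ before taking $n\to\infty$. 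The set-theoretic facts (membership of $y$ in $\pi^{-1}(K(f))$ and of $\mathbf{i},\mathbf{j}$ in $\mathcal{CL}(y,K(f))$) are then routine consequences of upper semi-continuity of $K$ and compactness of $\A^{\mathbb N}$.
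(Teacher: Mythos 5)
Your proof is correct and follows essentially the same route as the paper's: normalize the supercritical values to a fundamental domain, pass to convergent subsequences of the points and the symbol sequences, use upper semi-continuity of $f\mapsto K(f)$ to place the limits in $\pi^{-1}(K(f))$ and $\mathcal{CL}(y,K(f))$, and pass to the limit in the identity $\Dif h^{f_n}_{\mathbf{i}^{(n)}}(y_n)=\Dif h^{f_n}_{\mathbf{j}^{(n)}}(y_n)$. The only difference is that you spell out the uniform geometric tail estimate justifying the limit of the derivative series, which the paper leaves implicit.
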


\newpage
\begin{proof} Suppose that $f_n\to f_0$ in $\mathcal{C}^1$ and for each $n$, $K(f_n)$ has an $f_n$-supercritical value $x_n\in \pi^{-1}(K(f_n))$. We need to show that $f_0$ is  supercritical. Without loss of generality, we may assume $x_n\in [0,1)$. After passing to a subsequence, we may assume $x_n\to x_0$.
	By the upper semi-continuity of $K(\cdot)$, we have $\pi(x_0)\in K(f_0)$. Let $\mathbf{i}^n=(i^n_k)_{k=1}^\infty, \mathbf{j}^n =(j_k^n)_{k=1}^\infty$ be elements in $\mathcal{CL}(x_n, K(f_n))$ such that 
	$i^n_1\not=j^n_1$ and 
\begin{equation}\label{eqn:superfn}
\Dif h^{f_n}_{\mathbf{i}^n}(x_n)=\Dif h^{f_n}_{\mathbf{j}^n}(x_n).
\end{equation}
Passing to a further subsequence, we may assume that $\mathbf{i}^n\to \mathbf{i}=(i_k)_{k=1}^\infty$ and $\mathbf{j}^n\to (j_k)_{k=1}^\infty$ in $\A^{\mathbb{N}_1}$. 
	Then $i_1\not=j_1$. Again by the upper semi-continuity of $K(\cdot)$, we have $\mathbf{i},\mathbf{j}\in \mathcal{CL}(x_0, K(f_0))$. Letting $n\to\infty$ in (\ref{eqn:superfn}), we obtain $$\Dif h^f_\mathbf{i}(x_0)=\Dif h^f_{\mathbf{j}}(x_0),$$
	i.e., $x_0$ is an $f_0$-supercritical value of $K(f_0)$. 
\end{proof}

\section{Shyness of existence of supercritical values}\label{sec:B} 

The goal of this section is to prove the following theorem.

\begin{thm}\label{thm:Shy} 
Let $T\in\mathcal{E}^2$ such that $T(\pi(0))=\pi(0)$.  Fix $C_*>0$ and $\lambda_*>1$ such that \eqref{eqn:C*lambda*} holds. Let $\mathcal{D}$  be a dense subset of $\mathcal{C}^1$ consisting of $C^2$ functions. 
Let $f_0\in\mathcal{C}^2$ be such that 
$$h_{\mathrm{top}} (T|K(f_0))<\tfrac{1}{3}\log \lambda_*$$ 
and such that $K(f_0)$ contains no periodic point. Then there exists an open neighborhood $\mathcal{U}$ of $f_0$ in $\mathcal{C}^2$ and a finite sequence $\{\varphi_j\}_{j=1}^J\subset \mathcal{D}$ such that 
for any $f\in\mathcal{C}^2$,
	$$\left\{(t_1, \dots, t_J)\in [0,1]^J:  f+t_1\varphi_1+\dots+t_J\varphi_J \in \mathcal{U}  \mbox{ is 
			supercritical}\right\}$$
has Lebesgue measure zero. 
\end{thm}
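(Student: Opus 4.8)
The plan is to follow Tsujii's transversality argument, recast for the setting of supercritical values. The key quantitative input is the entropy bound $h_{top}(T|K(f_0))<\tfrac13\log\lambda_*$, which, via the variational principle, lets us control the number of admissible itineraries over $K(f_0)$ up to length $n$ by roughly $\lambda_*^{n/3}$ (say $\le C\lambda_*^{(1/3+\delta)n}$ for small $\delta>0$), while the contraction of the inverse branches $\tau_{\mathbf{i},n}$ is governed by $\lambda_*^{-n}$. First I would fix a neighborhood $\mathcal{U}$ of $f_0$ small enough (in the $C^2$ topology) that $K(f)\subset V$ for all $f\in\mathcal{U}$, where $V$ is a small neighborhood of $K(f_0)$ with no periodic points, and such that the entropy estimate on the relevant symbolic coding persists uniformly; upper semicontinuity of $K(\cdot)$ from Proposition~\ref{prop:setK}(iii) makes this possible. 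The derivatives $\Dif h^f_{\mathbf{i}}(x)$ are exponentially convergent sums $\sum_{n\ge1}\widehat f'(\tau_{\mathbf{i},n}(x))\,\tau_{\mathbf{i},n}'(x)$, which are $C^1$ (indeed Lipschitz) in $x$ with bounds uniform over $f\in\mathcal{U}$ and over all $\mathbf{i}$.

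The heart of the matter is a \emph{transversality estimate}: choosing finitely many $\varphi_1,\dots,\varphi_J\in\mathcal{D}$ appropriately, one shows that for any two itineraries $\mathbf{i},\mathbf{j}$ with distinct leading coordinates, the map
$$(t_1,\dots,t_J)\mapsto \Dif h^{f+\sum t_l\varphi_l}_{\mathbf{i}}(x)-\Dif h^{f+\sum t_l\varphi_l}_{\mathbf{j}}(x)$$
has a gradient in the $\mathbf{t}$-variables that is not too small in a suitable averaged sense — specifically, one wants to find, for each pair $(\mathbf{i},\mathbf{j})$ localized to a scale, some direction $\varphi_l$ whose effect on $\Dif h_{\mathbf{i}}-\Dif h_{\mathbf{j}}$ dominates the ``error'' coming from the other branches. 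Since $\varphi_l$ enters $h^f_{\mathbf{i}}$ linearly, $\partial_{t_l}\Dif h^f_{\mathbf{i}}(x)=\Dif h^{\varphi_l}_{\mathbf{i}}(x)$, independent of $f$ and $\mathbf{t}$; the contribution of the leading term, where $\mathbf{i}$ and $\mathbf{j}$ differ, is $\widehat{\varphi_l}'(\tau_{i_1}(x))\tau_{i_1}'(x)-\widehat{\varphi_l}'(\tau_{j_1}(x))\tau_{j_1}'(x)$, which we can arrange to be bounded away from zero by picking the $\varphi_l$ to be, e.g., suitable bump-like $C^2$ functions (elements of $\mathcal{D}$, which is dense) whose derivatives separate pairs of points $(\tau_{i_1}(x),\tau_{j_1}(x))$ at a fixed resolution. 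The tail contributions from depths $n\ge 2$ are bounded by the contraction $\lambda_*^{-n}\|\varphi_l\|_{C^1}$, and after summing over the $\le C\lambda_*^{(1/3+\delta)n}$ branches of each length the total ``bad'' measure is controlled by a geometric series $\sum_n \lambda_*^{(1/3+\delta)n}\cdot\lambda_*^{-n}$, which converges because $1/3+\delta<1$. (It is precisely here that the factor $1/3$ in the hypothesis buys the needed margin.) Fubini then gives that the set of $\mathbf{t}$ for which \emph{some} pair $(\mathbf{i},\mathbf{j})$ achieves $\Dif h_{\mathbf{i}}=\Dif h_{\mathbf{j}}$ at some $x\in\pi^{-1}(K(f+\sum t_l\varphi_l))$ has measure zero.

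Concretely, the steps in order: (1) shrink $\mathcal{U}$ using upper semicontinuity of $K$ and persistence of the entropy/coding bound; (2) cover $K(f_0)$ (hence all nearby $K(f)$) by finitely many arcs on which the relevant inverse branch pairs $(\tau_{i_1},\tau_{j_1})$ with $i_1\neq j_1$ are well-separated, and choose $\varphi_1,\dots,\varphi_J\in\mathcal{D}$ (one or several per arc) so that the leading-order derivative difference $\Dif h^{\varphi_l}_{\mathbf{i}}(x)-\Dif h^{\varphi_l}_{\mathbf{j}}(x)$ is bounded below uniformly over the relevant $(\mathbf{i},\mathbf{j},x)$; (3) for fixed $f\in\mathcal{C}^2$ and a fixed pair of \emph{finite} prefixes $(\mathbf{i}|_n,\mathbf{j}|_n)$, estimate the Lebesgue measure of the set of $\mathbf{t}\in[0,1]^J$ for which there exists $x$ with $f+\sum t_l\varphi_l\in\mathcal{U}$, $x$ consistent with those prefixes as a point of $\pi^{-1}(K(\cdot))$, and $|\Dif h_{\mathbf{i}}-\Dif h_{\mathbf{j}}|$ small — using the transversality lower bound this measure is $\lesssim$ (small parameter)$\times$(number of $x$-intervals), which is summable; (4) sum over all finitely-branching prefix pairs of each length $n$ and over $n$, using the entropy bound and the geometric convergence, to conclude the total measure is zero; (5) invoke Corollary~\ref{cor:stscK}/Proposition~\ref{prop:scstu} is \emph{not} needed here — we directly target the definition of supercritical value — and finish. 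The main obstacle I anticipate is step (3)–(4): making the counting of admissible prefixes genuinely interact with the $x$-localization so that the ``number of $x$-intervals'' at scale $n$ does not overwhelm the gain $\lambda_*^{-n}$ from contraction; this is exactly the delicate bookkeeping in Tsujii's argument, and getting the exponents to close is where the $\tfrac13\log\lambda_*$ threshold is spent. A secondary technical point is ensuring all constants (Lipschitz bounds on $\Dif h^f_{\mathbf i}$, separation of branches, the lower transversality bound) are uniform over $f\in\mathcal{U}$ and over all admissible $\mathbf{i}$, which follows from compactness once $\mathcal{U}$ and the covering arcs are fixed.
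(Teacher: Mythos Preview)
Your outline is the same as the paper's (a Tsujii-style transversality argument), and your steps (1)--(2) match Lemmas~\ref{lem:complexityapp} and~\ref{lem:transversal} closely. The one place where your bookkeeping is off is the exponent count in steps (3)--(4), and since you flag this as the crux, let me say exactly what is missing.

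The supercritical condition involves \emph{three} itineraries, not one or two: the two backward codes $\mathbf{i},\mathbf{j}\in\mathcal{CL}(x,K(f))$ with $i_1\neq j_1$, \emph{and} the base point $x$ itself, which lies in the moving set $K(f)$. The paper handles $x$ by using $K(f)=T(K(f))$ to pull back: there is a third itinerary $\mathbf{w}$ with $\pi(\tau_{\mathbf{w},n}(x'))\in K(f)$ for all $n$, and one replaces $x$ by the discrete approximant $\tau_{\mathbf{w},mN}(0)$ at cost $O(\lambda_*^{-mN})$ in the $C^2$ norm (this is where $f\in\mathcal{C}^2$ is used). After this discretization the difference $\Dif h^{F_{\mathbf t}}_{\mathbf u}-\Dif h^{F_{\mathbf t}}_{\mathbf v}$ at the fixed point $\tau_{\mathbf w,mN}(0)$ is \emph{affine} in $\mathbf t$, with one partial derivative $\ge 1/3$ by your step~(2), so the bad-$\mathbf t$ set for each fixed triple $(\mathbf u,\mathbf v,\mathbf w)$ has measure $O(\lambda_*^{-mN})$. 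The number of admissible triples is at most $(e^{\theta N})^{3m}$ with $\theta$ slightly above $h_{top}(T|K(f_0))$, so the total bad measure is $O\big((e^{3\theta}\lambda_*^{-1})^{mN}\big)\to 0$ precisely because $3\theta<\log\lambda_*$. Your displayed series $\sum_n \lambda_*^{(1/3+\delta)n}\lambda_*^{-n}$ carries only one entropy factor and converges for the wrong reason; with the correct cube it becomes $\sum_n e^{3\theta n}\lambda_*^{-n}$, and \emph{this} is where the $\tfrac13$ threshold is spent. (Also, the sign on your $\delta$ should be negative: the count is $\lesssim \lambda_*^{(1/3-\delta')n}$, not $\lambda_*^{(1/3+\delta)n}$.)
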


The following is an immediately corollary, where $\mathcal{H}_{\mathrm{per}}=\mathcal{H}_{\mathrm{per}}(T)$ is defined in \eqref{eqn:per}. It is worthy mentioning that the proof of this corollary relies on~\cite{GS24}, where the analyticity of $T$ and $f\in\mathcal{H}$ is essentially used. 
\begin{cor}\label{cor:nosc} 
	Let $T$ and $\mathcal{H}$ be as in Theorem~\ref{thm:main}. Suppose $T(\pi(0))=\pi(0)$ and let
	\begin{equation}\label{eqn:H0}
        \mathcal{H}_{\mathrm{sc}}=\{f\in \mathcal{H}: f\text{ is supercritical}\, \}.
	\end{equation}
	Then $\mathcal{H}_{\mathrm{sc}}\setminus \mathcal{H}_{\mathrm{per}}$ 
	is a countable union of finite-dimensionally shy subsets of $\mathcal{H}$.
\end{cor}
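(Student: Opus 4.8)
The plan is to deduce Corollary~\ref{cor:nosc} from Theorem~\ref{thm:Shy} by a localization in the $\mathcal{C}^2$ topology. Put $S=\mathcal{H}_{sc}\setminus\mathcal{H}_{per}$. Since $\mathcal{H}\subset\mathcal{C}^\omega\subset\mathcal{C}^2$ and $\mathcal{H}$ is as in the Main Theorem, by Proposition~\ref{prop:shylocal} applied with $r=2$ it suffices to produce, for each $f_0\in S$, an open neighborhood $\mathcal{U}$ of $f_0$ in $\mathcal{C}^2$ for which $\mathcal{U}\cap S$ is finite dimensionally shy in $\mathcal{H}$. So I would fix such an $f_0$; then $f_0\in\mathcal{H}$ is real analytic and $f_0\notin\mathcal{H}_{per}$, and the first task is to check that $f_0$ meets the two hypotheses of Theorem~\ref{thm:Shy}. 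The set $K(f_0)$ contains no periodic point: otherwise the whole orbit of such a point would lie in $K(f_0)$ (as $T(K(f_0))=K(f_0)$ by Proposition~\ref{prop:setK}(ii)), so its averaged Dirac measure would be $T|K(f_0)$-invariant, hence a maximizing measure of $(f_0,T)$ by Proposition~\ref{prop:setK}(iv), contradicting $f_0\notin\mathcal{H}_{per}$. As for the entropy hypothesis, since $f_0$ is real analytic and $(f_0,T)$ has no periodic maximizing measure, the result of \cite{GS24} — together with Proposition~\ref{prop:setK}(iv), which identifies the $T|K(f_0)$-invariant measures with the maximizing measures of $(f_0,T)$, and the variational principle — gives $h_{top}(T|K(f_0))=0<\tfrac13\log\lambda_*$.

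With both hypotheses in hand, I would apply Theorem~\ref{thm:Shy} to $f_0$, choosing $\mathcal{D}=\mathcal{H}$: this $\mathcal{D}$ is dense in $\mathcal{C}^1$ by the standing assumption on $\mathcal{H}$ and consists of $C^2$ (indeed $C^\omega$) functions. This produces an open neighborhood $\mathcal{U}$ of $f_0$ in $\mathcal{C}^2$ and $\varphi_1,\dots,\varphi_J\in\mathcal{H}$ such that, for every $f\in\mathcal{C}^2$,
\[
m_J\Big(\big\{(t_1,\dots,t_J)\in[0,1]^J:\ f+\textstyle\sum_{j=1}^{J}t_j\varphi_j\in\mathcal{U}\ \text{and has a supercritical value}\big\}\Big)=0 .
\]
By the definition of $\mathcal{H}_{sc}$, every element of $S$ has a supercritical value, so for each $f\in\mathcal{H}$ the slice $\{(t_j)\in[0,1]^J:\ f+\sum_j t_j\varphi_j\in\mathcal{U}\cap S\}$ is contained in this $m_J$-null set. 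Hence $\mathcal{U}\cap S$ is $J$-dimensionally shy in $\mathcal{H}$, which is exactly the input Proposition~\ref{prop:shylocal} requires; gluing the countably many neighborhoods then yields the corollary.

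The one step carrying real content is the entropy estimate $h_{top}(T|K(f_0))<\tfrac13\log\lambda_*$; everything else is an unpacking of the definition of (finite dimensional) shyness and a direct call to Theorem~\ref{thm:Shy}. The subtlety there is that \cite{GS24} is phrased in terms of maximizing measures, whereas Theorem~\ref{thm:Shy} needs control of the topological entropy of the entire invariant set $K(f_0)$; Proposition~\ref{prop:setK}(iv) is precisely what bridges the two, and if only the existence of a single zero-entropy maximizing measure is extracted from \cite{GS24}, one may first trim away the finite dimensionally shy set of $f\in\mathcal{H}$ with a non-unique maximizing measure (via \cite{Mor21}), on whose complement $T|K(f)$ is uniquely ergodic so that its topological entropy coincides with the entropy of its single maximizing measure.
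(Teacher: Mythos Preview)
Your proof is correct and follows essentially the same route as the paper: localize via Proposition~\ref{prop:shylocal}, use \cite{GS24} together with Proposition~\ref{prop:setK}(iv) and the variational principle to get $h_{top}(T|K(f_0))=0$, and then apply Theorem~\ref{thm:Shy} with $\mathcal{D}=\mathcal{H}$. You are slightly more explicit than the paper in verifying that $K(f_0)$ contains no periodic point (the paper leaves this implicit in the phrase ``for $f_0$ as above''), but the argument is the same.
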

\begin{proof} 
By~\cite{GS24}, for each $f\in \mathcal{H}$, either each $\mu\in \mathcal{M}(T)$ is a maximizing measure of $f$, or each maximizing measure has entropy zero. In the former case, $f\in \mathcal{H}_{\mathrm{per}}$; in the latter case, by  Proposition~\ref{prop:setK} and the variational principle, $h_{\mathrm{top}} (T|K(f))=0$. So for each $f_0\in \mathcal{H}_{\mathrm{sc}}\setminus \mathcal{H}_{\mathrm{per}}$, $h_{\mathrm{top}} (T|K(f_0))=0$. Applying Theorem~\ref{thm:Shy} to $\mathcal{D}=\mathcal{H}$, for $f_0$ as above we can find an open neighborhood $\mathcal{U}$ of $f_0$ in $\mathcal{C}^2$ such that $\mathcal{U}\cap (\mathcal{H}_{\mathrm{sc}}\setminus \mathcal{H}_{\mathrm{per}})$ is finite-dimensionally shy. Then by Proposition~\ref{prop:shylocal} (taking $\mathcal{V}=\mathcal{C}^2$), the corollary follows.
\end{proof}

We shall adopt an argument from Tsujii~\cite{Tsu01} to prove Theorem~\ref{thm:Shy}. For preparation, we need some technical lemmas. To begin with, let us introduce some notations. Recall that $\A=\{0,1,\dots,d_T -1\}$. 

\begin{itemize}
  \item If $\mathbf{i}\in \A^n$ for some $n=1,2,\dots$, then $\mathbf{i}$ is called a {\em finite word} and denote $|\mathbf{i}|=n$. If $\mathbf{i}\in \A^{\mathbb{N}_1}$,  then $\mathbf{i}$ is called an {\em infinite word} and denote $|\mathbf{i}|=\infty$. By saying a {\em word} we mean either a finite or an infinite word. 
  \item For words $\mathbf{i},\mathbf{j},\mathbf{u},\mathbf{v},\dots$, their first letters will be denoted by $i_1,j_1,u_1,v_1,\dots$ respectively.
  \item The notation $\tau_{\mathbf{i},n}$ introduced in \eqref{eqn:tauin} still makes sense for a finite word $\mathbf{i}$ provided that $|\mathbf{i}|\ge n$.
  \item Given $f\in\mathcal{C}^1$, for any word $\mathbf{i}$, as a generalization of \eqref{eqn:hi}, let
  $$	h^f_{\mathbf{i}}:= \sum_{n=1}^{|\mathbf{i}|} \left(\widehat{f}\circ \tau_{\mathbf{i},n}-\widehat{f}\circ \tau_{\mathbf{i},n}(0)\right). $$
  \item For $r=1,2,\dots$ and $f\in \mathcal{C}^r$, $\|f\|_{C^r}:=\max_{0\le k\le r} \|\Dif^k f\|_{C^0}$.
\end{itemize}

\begin{lem}\label{lem:second derivative}
  There exists a constant $C(T)>1$ such that for any $f\in \mathcal{C}^2$ and any word $\mathbf{i}$, the following holds:
$$|\Dif h^f_{\mathbf{i}}(x) - \Dif h^f_{\mathbf{i}}(y)| \le C(T) \cdot \|\Dif f\|_{C^1} \cdot |x-y|, \quad\forall x,y\in\mathbb{R}.$$ 
\end{lem}
\begin{proof}
Direct calculation yields that
 $$ |\Dif h^f_{\mathbf{i}}(x) - \Dif h^f_{\mathbf{i}}(y)| \le \sum_{n=1}^{|\mathbf{i}|} \sup_{z\in \mathbb{R}} |(\widehat{f}\circ\tau_{\mathbf{i},n})''(z)| \cdot |x-y|,$$
and 
$$ |(\widehat{f}\circ\tau_{\mathbf{i},k})''(z)| \le \|\Dif f\|_{C^0}\cdot|\tau_{\mathbf{i},n}''(z)| + \|\Dif^2 f\|_{C^0}\cdot|\tau_{\mathbf{i},n}'(z)|^2.$$
On the other hand, by \eqref{eqn:C*lambda*}, $|\tau_{\mathbf{i},n}'|\le (C_*\lambda_*^n)^{-1}$, and the conclusion follows once we show that $|\tau_{\mathbf{i},n}''|\le C_0 \lambda_*^{-n}$ for some $C_0 >1$ dependent only on $T$. To estimate $|\tau_{\mathbf{i},n}''|$, denote $\mathbf{i}=i_1i_2\dots$ and $\tau_{\mathbf{i},0}=\mathrm{id}$. Then by \eqref{eqn:tauin} and the chain rule, 
$$\tau_{\mathbf{i},n}' = \prod_{m=1}^n \tau_{i_m}'\circ \tau_{\mathbf{i},m-1},$$
and hence
$$\tau_{\mathbf{i},n}'' = \sum_{m=1}^n \frac{\tau_{\mathbf{i},n}'}{\tau_{i_m}'\circ \tau_{\mathbf{i},m-1}}\cdot \left(\tau_{i_m}''\circ \tau_{\mathbf{i},m-1}\right) \cdot \tau_{\mathbf{i},m-1}'.$$
Since $|\tau_{i_m}'|$ is bounded from below, $|\tau_{i_m}''|$ is bounded from above and $|\tau_{\mathbf{i},k}'|\le (C_*\lambda_*^k)^{-1}$, the desired upper bound of $|\tau_{\mathbf{i},n}''|$ follows.
\end{proof}

\begin{lem}\label{lem:complexityapp}
	For any $f_0\in\mathcal{C}^2$ with $\pi(0)\not\in K(f_0)$, there exists $C_0\ge 1$ such that the following holds. Given $\theta>h_{\mathrm{top}} (T|K(f_0))$ and $\delta>0$, there exist $\varepsilon_0\in (0,1)$, a positive integer $N$ and $\mathcal{A}_N\subset \A^N$ with
\begin{equation}\label{eqn:low comp}
  \# \mathcal{A}_N< e^{\theta N},
\end{equation}
which satisfy the properties below. If $f\in\mathcal{C}^2$ is supercritical with 
$$\|f-f_0\|_{C^1}<\varepsilon_0 \quad\text{and}\quad \|f-f_0\|_{C^2}<1,$$ 
then for any positive integer $m$, there exist $\mathbf{u}, \mathbf{v},\mathbf{w}\in \A^{mN}$ with $u_1\not=v_1$ such that:
\begin{equation}\label{eqn:coding diff}
  \left|\Dif h^f_{\mathbf{u}}(\tau_{\mathbf{w},mN}(0))-\Dif h^f_{\mathbf{v}}(\tau_{\mathbf{w},mN}(0))\right|\le C_0\lambda_*^{-mN},
\end{equation}
\begin{equation}\label{eqn:coding base}
 \inf\big\{|\tau_{\mathbf{w},mN}(0)-y|:y\in (0,1)\cap \pi^{-1}(K(f_0))\big\} <\delta,
\end{equation} 
and for $z_1z_2\dots z_{mN} =\mathbf{u},\mathbf{v},\mathbf{w}$,
\begin{equation}\label{eqn:SFT}
  z_{k+1}z_{k+2}\dots z_{k+N}\in\mathcal{A}_N, \quad 0\le k\le (m-1)N.
\end{equation}
\end{lem}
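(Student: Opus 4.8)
\textbf{Proof proposal for Lemma~\ref{lem:complexityapp}.}

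The plan is to extract the constant $C_0$ and the word family $\mathcal{A}_N$ purely from $(T,f_0)$, and then to use the $C^1$-closeness of $f$ to $f_0$ to transport the supercritical value of $f$ into the combinatorial setting. First I would record the standard estimates: since $T\in\mathcal{E}^\omega$ satisfies \eqref{eqn:C*lambda*}, the inverse branches $\tau_i$ are contractions with $|\tau_{\mathbf{i},n}'|\le C_*^{-1}\lambda_*^{-n}$, and from the definition $h^f_{\mathbf{i}}=\sum_{n\ge1}(\widehat f\circ\tau_{\mathbf{i},n}-\widehat f\circ\tau_{\mathbf{i},n}(0))$ one gets $\|\Dif h^f_{\mathbf{i}}\|_{C^0}\le C_1\|f\|_{C^1}$ for a constant $C_1=C_1(T)$, together with a Hölder/Lipschitz modulus for $\Dif h^f_{\mathbf{i}}$ controlled by $\|f\|_{C^2}$ when $f\in\mathcal{C}^2$ (this uses the bounded distortion of $T$ and is why the hypothesis $\|f-f_0\|_{C^2}<1$ is imposed — it keeps $\|f\|_{C^2}$ bounded). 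A second elementary but crucial point is the \emph{tail estimate}: if two infinite words $\mathbf{i},\mathbf{j}$ agree on their first $\ell$ letters, then $|\Dif h^f_{\mathbf{i}}(x)-\Dif h^f_{\mathbf{j}}(x)|\le C_2\|f\|_{C^1}\lambda_*^{-\ell}$, again by the uniform contraction; this is the source of the factor $\lambda_*^{-mN}$ in \eqref{eqn:coding diff}.

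Next I would invoke the hypothesis $h_{top}(T|K(f_0))<\theta$ together with $\pi(0)\notin K(f_0)$. By upper semi-continuity of $K(\cdot)$ (Proposition~\ref{prop:setK}(iii)) and of topological entropy on the relevant scale, one can choose a small neighbourhood $V$ of $K(f_0)$ in $\R/\Z$ with $\pi(0)\notin\overline{V}$ such that, for all $C^2$ functions $f$ sufficiently $C^1$-close to $f_0$, $K(f)\subset V$. Choosing $N$ large and letting $\mathcal{A}_N$ be the set of length-$N$ admissible words $i_1\cdots i_N$ for which $\pi(\tau_{i_N}\circ\cdots\circ\tau_{i_1})$ intersects $V$ (equivalently, words that can occur as a window of a coding sequence of a point whose forward orbit stays in $V$), the definition of topological entropy via refining covers gives $\#\mathcal{A}_N<e^{\theta N}$ once $N$ is large, which is \eqref{eqn:low comp}. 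Shrinking $V$ further, the $\delta$ in \eqref{eqn:coding base} can be arranged, since every point of $K(f_0)$ has a $\delta$-neighbourhood meeting $(0,1)\cap\pi^{-1}(K(f_0))$; this also determines $\eps_0$. The constant $C_0$ is then taken to be (a fixed multiple of) $\max(C_1,C_2)$ above, depending only on $T$ and on the $C^2$-ball around $f_0$.

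Now suppose $f\in\mathcal{C}^2$ with $\|f-f_0\|_{C^1}<\eps_0$, $\|f-f_0\|_{C^2}<1$ has a supercritical value $x\in\pi^{-1}(K(f))$: there are $\mathbf{i},\mathbf{j}\in\mathcal{CL}(x,K(f))$ with $i_1\ne j_1$ and $\Dif h^f_{\mathbf{i}}(x)=\Dif h^f_{\mathbf{j}}(x)$. Fix $m$, set $L=mN$, and define $\mathbf{u}=(i_1,\dots,i_L)$, $\mathbf{v}=(j_1,\dots,j_L)$. For the base word $\mathbf{w}$ I would choose a length-$L$ initial segment of a coding sequence of the point $x$ itself (viewed as $\pi(x)\in K(f)$): since $T(K(f))=K(f)$, $x$ has a full backward coding in $\mathcal{CL}(x,K(f))$, but what I actually want is a \emph{forward} coding realizing $x$ as $\tau_{\mathbf{w},L}(0)$ up to an exponentially small error, or rather I replace $x$ by the nearby point $\tau_{\mathbf{w},L}(0)$ where $\mathbf{w}$ is chosen so that $\pi(\tau_{\mathbf{w},L}(0))$ lies within $C_*^{-1}\lambda_*^{-L}$ of the $K(f)$-point $\pi(x)$. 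Because $\Dif h^f_{\mathbf{i}},\Dif h^f_{\mathbf{j}}$ are Hölder with constant controlled by $\|f\|_{C^2}\le\|f_0\|_{C^2}+1$, moving the evaluation point from $x$ to $\tau_{\mathbf{w},L}(0)$ changes each derivative by at most $C_0\lambda_*^{-L}$ (possibly with a Hölder exponent — one can absorb this by enlarging $C_0$ or, if needed, proving \eqref{eqn:coding diff} with $\lambda_*$ replaced by any fixed $\lambda\in(1,\lambda_*)$, which is harmless for the later argument), and truncating the infinite words $\mathbf{i},\mathbf{j}$ to length $L$ changes each by at most $C_0\lambda_*^{-L}$ by the tail estimate; combining with $\Dif h^f_{\mathbf{i}}(x)=\Dif h^f_{\mathbf{j}}(x)$ yields \eqref{eqn:coding diff}. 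Property \eqref{eqn:SFT} holds because every length-$N$ window of $\mathbf{u},\mathbf{v},\mathbf{w}$ is a window of a coding sequence of a point whose $T$-orbit stays in $K(f)\subset V$, hence lies in $\mathcal{A}_N$; and \eqref{eqn:coding base} holds because $\pi(\tau_{\mathbf{w},L}(0))$ is within $\delta$ of $K(f_0)$ once $V$ was chosen inside a $\delta/2$-neighbourhood of $K(f_0)$ and $L$ is large.

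\textbf{Main obstacle.} The delicate point is the simultaneous control of two competing approximations at the \emph{same} scale $\lambda_*^{-L}$: truncating the coding words, and perturbing the base point from the genuine supercritical value $x\in\pi^{-1}(K(f))$ to a point of the form $\tau_{\mathbf{w},L}(0)$ that is combinatorially anchored to $f_0$ (so that \eqref{eqn:SFT} and \eqref{eqn:coding base} can be stated with $\mathcal{A}_N$ and $K(f_0)$ rather than $K(f)$). Making the Hölder modulus of $\Dif h^f_{\mathbf{i}}$ interact correctly with the contraction rate — so that the perturbation of the evaluation point is genuinely $O(\lambda_*^{-L})$ and not merely $O(\lambda_*^{-\gamma L})$ — is where the $C^2$ hypothesis and bounded distortion must be used carefully; if only Hölder control is available one should state \eqref{eqn:coding diff} with a rate $\lambda^{-L}$ for some fixed $\lambda>1$, which is all that the application in the proof of Theorem~\ref{thm:Shy} requires. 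A secondary technical care is ensuring that the neighbourhood $V$ and the integer $N$ can be chosen \emph{before} $f$, uniformly over the $C^1$-$\eps_0$-ball, which is exactly what upper semi-continuity of $K(\cdot)$ and of entropy provides.
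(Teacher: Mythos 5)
Your overall route is the same as the paper's: define $\mathcal{A}_N$ from the dynamics of $T$ near $K(f_0)$, use upper semi-continuity of $K(\cdot)$ to force $K(f)$ into the corresponding neighbourhood, take $\mathbf{u},\mathbf{v}$ as truncations of the two coding words at the supercritical value $x_0$ (with the tail estimate giving the $\lambda_*^{-mN}$ error), choose $\mathbf{w}$ so that $\tau_{\mathbf{w},mN}(0)$ lies within $(C_*\lambda_*^{mN})^{-1}$ of $x_0$, and transfer the evaluation point using the uniform Lipschitz bound on $\Dif h^f_{\mathbf{i}}$ coming from $\|f\|_{C^2}\le\|f_0\|_{C^2}+1$ (your worry about only Hölder control is unfounded: $\Dif^2(\widehat f\circ\tau_{\mathbf{i},n})$ is $O(\lambda_*^{-n})$ by bounded distortion, so $\Dif h^f_{\mathbf{i}}$ is genuinely Lipschitz with a constant depending only on $T$ and the $C^2$-ball, and the full rate $\lambda_*^{-mN}$ in \eqref{eqn:coding diff} is obtained).

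There is, however, one step that fails as written: the construction of $\mathcal{A}_N$. You first fix a neighbourhood $V$ of $K(f_0)$ (chosen so that $K(f)\subset V$ for $f$ near $f_0$), and then let $\mathcal{A}_N$ be the set of words whose level-$N$ cylinder meets $V$, claiming $\#\mathcal{A}_N<e^{\theta N}$ for $N$ large. This is false for a fixed open $V$ of positive measure: the level-$N$ cylinders tile $(0,1)$ and each has length at most $(C_*\lambda_*^N)^{-1}$, so at least $|V|\,C_*\lambda_*^N$ of them meet $V$, and $\log\lambda_*>\theta$ in the intended application (indeed $\theta$ may be arbitrarily small when $h_{top}(T|K(f_0))=0$). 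This is not cosmetic: \eqref{eqn:low comp} is precisely what beats the factor $\lambda_*^{-mN}$ in the measure estimate of Theorem~\ref{thm:Shy}. Your parenthetical alternative (windows of codings of points whose entire forward orbit stays in $V$) is \emph{not} equivalent to the cylinder-meets-$V$ definition and is the one that can be made to work, but it still requires choosing $V$ and $N$ jointly. The clean fix — and the paper's actual construction — reverses the quantifiers: define $\mathcal{A}_n$ as the set of words whose cylinder meets $\pi^{-1}(K(f_0))$ itself, note that $\tfrac{1}{n}\log\#\mathcal{A}_n\to h_{top}(T|K(f_0))<\theta$, pick $N$ so large that both $\#\mathcal{A}_N<e^{\theta N}$ and every cylinder of a word in $\mathcal{A}_N$ has length less than $\delta$, and only then set $V$ to be the union of those cylinders. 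With this order, $\pi(V)\supset K(f)$ forces every length-$N$ window of $\mathbf{i},\mathbf{j}$ and of the itinerary of $x_0$ to lie in $\mathcal{A}_N$, and \eqref{eqn:coding base} comes for free because the last window of $\mathbf{w}$ determines a component of $V$ of length less than $\delta$ meeting $\pi^{-1}(K(f_0))$. The rest of your argument then goes through.
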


\begin{proof} Fix $\theta>h_{\mathrm{top}} (T|K(f_0))$ and $\delta>0$. For each $n$, let $\mathcal{A}_n$ denote the subset of $\A^n$ consisting of $i_1i_2\dots i_n$ such that
	$$\tau_{i_n}\circ \dots \circ\tau_{i_2}\circ\tau_{i_1} ((0,1))\cap \pi^{-1}(K(f_0))\not=\emptyset.$$ 
Then as $n\to\infty$,
$$\frac{1}{n}\log \#\mathcal{A}_n\to h_{\mathrm{top}} (T|K(f_0))<\theta,$$
	so there exists a positive integer $N$ such that \eqref{eqn:low comp} holds, and such that for
$$V:=\bigcup_{i_1i_2\dots i_N\in \mathcal{A}_N} \tau_{i_N}\circ\dots\circ \tau_{i_2}\circ\tau_{i_1}((0,1)),$$
each connected component of $V$ has length less than $\delta$. Then $\pi(V)$ is an open neighborhood of $K(f_0)$, and by the upper semi-continuity of $f\mapsto K(f)$ (Proposition~\ref{prop:setK}), there exists $\varepsilon_0\in (0,1)$ such that if $\|f-f_0\|_{C^1}<\varepsilon_0$, then
	$K(f)\subset \pi(V)$.

	Now suppose $f$ is as in the lemma and $x_0\in (0,1)\cap \pi^{-1}(K(f))$ is an $f$-supercritical value of $K(f)$. Let $\mathbf{i}=(i_n)_{n\ge 1}, \mathbf{j}=(j_n)_{n\ge 1}\in \A^{\mathbb{N}_1}$ be such that 
	$i_1\not=j_1$, 
	$$\tau_{\mathbf{i},n}(x_0),  \tau_{\mathbf{j},n}(x_0) \in \pi^{-1}(K(f))\mbox{ for all } n\ge 1,$$
	and such that $$\Dif h^f_{\mathbf{i}}(x_0)=\Dif h^f_{\mathbf{j}}(x_0).$$
	It follows that $i_{k+1}i_{k+2}\dots i_{k+N}, j_{k+1}j_{k+2}\dots j_{k+N}\in \mathcal{A}_N$
	for all $k\ge 0$. 

Let $m\ge 1$ be given. Take $\mathbf{u}=i_1i_2\dots i_{mN}$ and $\mathbf{v}=j_1j_2\dots j_{mN}$. Then $u_1\ne v_1$, \eqref{eqn:SFT} holds for $\mathbf{u},\mathbf{v}$ and
	$$\left|\Dif h^f_{\mathbf{u}}(x_0)-\Dif h^f_{\mathbf{v}}(x_0)\right| \le \sum_{k=mN+1}^\infty \left|(\widehat{f}\circ\tau_{\mathbf{i},k})'(x_0) - (\widehat{f}\circ\tau_{\mathbf{j},k})'(x_0)\right|\le C\lambda_*^{-mN}$$
for some constant $C=C(f_0)\ge 1$. On the other hand, since $\pi(x_0)\in K(f)=T(K(f))\subset \pi(V)$, 
there exists $\mathbf{w}\in \A^{mN}$ such that \eqref{eqn:coding base} holds, \eqref{eqn:SFT} holds for $\mathbf{w}$ and 
$$ |\tau_{\mathbf{w},mN}(0)-x_0| \le (C_*\lambda_*^{mN})^{-1}.$$
Combining the last inequality with $\|\Dif f\|_{C^1}<\|\Dif f_0\|_{C^1}+1$ and Lemma~\ref{lem:second derivative} yields that
$$|\Dif h^f_{\mathbf{u}}(\tau_{\mathbf{w},mN}(0))-\Dif h^f_{\mathbf{v}}(\tau_{\mathbf{w},mN}(0))|\le |\Dif h^f_{\mathbf{u}}(x_0)-\Dif h^f_{\mathbf{v}}(x_0)|+\tilde{C}\lambda_*^{-mN}$$
for some $\tilde{C}=\tilde{C}(f_0)$, which proves \eqref{eqn:coding diff} and completes the proof of the lemma.
\end{proof}

\begin{lem}\label{lem:transversal}
Let $f_0\in \mathcal{C}^2$ be such that $K(f_0)$ contains no periodic point. Denote
$$\widehat{K}:=\pi^{-1}(K(f_0))\cap [0,1]\subset (0,1).$$
Let $\mathcal{D}$ be as in Theorem~\ref{thm:Shy}. Then there exist $\delta>0$, $x_1,\dots,x_Q\in \widehat{K}$, and $\varphi_{i,q}\in \mathcal{D}$ for $i\in\A, 1\le q\le Q$, such that the following hold:
\begin{itemize}
  \item [(i)] $\widehat{K}\subset \bigcup_{q=1}^Q(x_q-\delta,x_q+\delta)$;
  \item [(ii)] if $|x-x_q|\le 2\delta$ for some $q$, then for any $i\in \A$ and any word $\mathbf{j}$ we have:
	\begin{equation}\label{eqn:matrix1}
		\left|\Dif h^{\varphi_{i,q}}_{\mathbf{j}}(x)-1\right|\le \frac{1}{3}, \quad\mbox{if}\quad j_1=i; 
	\end{equation}
	and
	\begin{equation}\label{eqn:matrix2}
		\left|\Dif h^{\varphi_{i,q}}_{\mathbf{j}}(x)\right|\le \frac{1}{3}, \quad\mbox{if}\quad j_1\ne i.
	\end{equation}
\end{itemize}
 
\end{lem}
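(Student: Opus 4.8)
\emph{Proof plan.}
The plan is to define each $\varphi_{i,q}$ through its lift: let $\widehat{\varphi_{i,q}}$ be a primitive of $\rho_{i,q}-c_{i,q}$, where $\rho_{i,q}\ge 0$ is a smooth bump supported in $B(\tau_i(x_q),\sigma)$, equal to $\whT'$ on $B(\tau_i(x_q),\sigma/2)$, with $0\le\rho_{i,q}\le M:=\sup\whT'$, and $c_{i,q}:=\int_0^1\rho_{i,q}\le 2M\sigma$ is the mean, inserted so that $\widehat{\varphi_{i,q}}$ is $1$-periodic; here $\tau_i(x_q)=\tau(x_q+i)$ and $\sigma,\delta>0$ are small parameters fixed at the end. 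Differentiating \eqref{eqn:hi} term by term gives $\Dif h^{\varphi_{i,q}}_{\mathbf{j}}(x)=\bigl(\sum_{n\ge 1}\rho_{i,q}(\tau_{\mathbf{j},n}(x))\,\tau_{\mathbf{j},n}'(x)\bigr)-c_{i,q}\sum_{n\ge 1}\tau_{\mathbf{j},n}'(x)$, and since $|\tau_{\mathbf{j},n}'(x)|\le (C_*\lambda_*^n)^{-1}$ by \eqref{eqn:C*lambda*}, the last sum is $O(\sigma)$. In the first sum the $n=1$ term is exactly $\whT'(\tau_i(x))\,\tau_i'(x)=1$ when $j_1=i$ — because for $|x-x_q|\le 2\delta$ with $\delta$ small, $\tau_i(x)$ lies in the core $B(\tau_i(x_q),\sigma/2)$ — and is $0$ when $j_1\ne i$, since then $\tau_{j_1}(x)$ lies in the interval $\tau_{j_1}([0,1])$, which is disjoint from $\operatorname{supp}\rho_{i,q}$ once $\sigma$ is small (this uses $\widehat K\subset(0,1)$, so that $\tau_i(\widehat K)$ sits in the interior of $\tau_i([0,1])$ with a margin uniform in $i\in\A$ and in $x_q\in\widehat K$). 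Thus \eqref{eqn:matrix1}--\eqref{eqn:matrix2} reduce to showing that the tail $\sum_{n\ge 2}\rho_{i,q}(\tau_{\mathbf{j},n}(x))\,\tau_{\mathbf{j},n}'(x)$ is $<\tfrac16$ for every word $\mathbf{j}$.

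This tail estimate is the crux, and it is the only place where the hypothesis that $K(f_0)$ contains no periodic point is used. Call $n\ge 2$ a \emph{return} if $\tau_{\mathbf{j},n}(x)\in B(\tau_i(x_q),\sigma)$; for $\sigma$ small this forces $j_n=i$ and $\tau_{\mathbf{j},n-1}(x)\in B(x_q,M\sigma)$. First I would fix $L$, depending only on $T$, with $\tfrac{M}{C_*}\cdot\tfrac{\lambda_*^{-L}}{1-\lambda_*^{-L}}<\tfrac16$; I may also assume $C_*\lambda_*>1$ (a harmless adjustment of the constants, or an adapted metric), so that $(\whT^g)'>1$ pointwise for every $g\ge 1$. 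The set of $T$-periodic points of period $\le L$ is finite, hence disjoint from the compact set $K(f_0)$, so its distance $d_L$ to $K(f_0)$ is positive, and I would then take $\sigma$ and $\delta\le\sigma$ small compared with $d_L$. If $n<n'$ were two returns with $n'-n\le L$, then $\whT^{\,n'-n}$ maps $\tau_{\mathbf{j},n'-1}(x)\in B(x_q,M\sigma)$ onto $\tau_{\mathbf{j},n-1}(x)\in B(x_q,M\sigma)$, so expansion yields a $T$-periodic point of period $\le L$ within distance $\tfrac{2M\sigma}{C_*\lambda_*-1}+M\sigma=O(\sigma)$ of $x_q\in K(f_0)$, contradicting $\sigma\ll d_L$; the same argument applied to $\whT^{\,n_1-1}(\tau_{\mathbf{j},n_1-1}(x))=x$ shows the smallest return $n_1$ satisfies $n_1-1>L$. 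Therefore the returns $n_1<n_2<\cdots$ obey $n_k>kL$, and the tail is at most $\sum_{k\ge 1}M(C_*\lambda_*^{n_k})^{-1}\le\tfrac{M}{C_*}\sum_{k\ge 1}\lambda_*^{-kL}<\tfrac16$, as wanted.

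Finally I would assemble the pieces: fix $\sigma$ small (also so small that each $B(\tau_i(x_q),\sigma)$ lies in the interior of $\tau_i([0,1])$ uniformly over $i\in\A$ and $x_q\in\widehat K$), then fix $\delta\le\sigma$ small enough that the estimates above hold on $\{|x-x_q|\le 2\delta\}$, and then use compactness of $\widehat K$ to choose $x_1,\dots,x_Q\in\widehat K$ with $\widehat K\subset\bigcup_q(x_q-\delta,x_q+\delta)$, which is (i). The smooth functions $\varphi_{i,q}$ so constructed satisfy (ii) with $\tfrac13$ improved to $\tfrac16+O(\sigma)<\tfrac14$; and since $\mathcal D$ is dense in $\mathcal C^1$ while $|\Dif h^{\psi}_{\mathbf{j}}(x)|\le\|\psi\|_{C^1}/(C_*(\lambda_*-1))$ uniformly in $\mathbf{j}$ and $x$, replacing each $\varphi_{i,q}$ by a sufficiently $C^1$-close element of $\mathcal D$ preserves (ii) with the stated constant $\tfrac13$. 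The hard part is the middle paragraph: bounding how often the inverse-branch orbit of an \emph{arbitrary} word can return near $x_q$, which is made possible precisely because closely spaced returns would produce periodic points of bounded period arbitrarily near $K(f_0)$ — forbidden by the hypothesis once $L$ is fixed first and $\sigma$ is shrunk afterwards.
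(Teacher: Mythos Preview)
Your argument is correct. The construction of the bump functions and the handling of the $n=1$ term match the paper's approach almost exactly; the paper directly posits $\psi_{i,y}\in\mathcal{C}^1$ with $(\widehat\psi_{i,y}\circ\tau_i)'=1$ on $[y-2\delta,y+2\delta]$ and $(\widehat\psi_{i,y})'=0$ outside $\tau_i([y-4\delta,y+4\delta])$, while you build the same object as a primitive of $\rho_{i,q}-c_{i,q}$ and keep track of the mean correction explicitly. The density step (passing from a smooth $\varphi_{i,q}$ to one in $\mathcal{D}$ via the uniform bound on $\Dif h^{\psi}_{\mathbf{j}}$ in terms of $\|\psi\|_{C^1}$) is identical.

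The genuine difference is in the tail estimate $\sum_{n\ge 2}$. The paper's route is shorter: it first fixes $M$ so that $\sum_{m>M}2C_T(C_*\lambda_*^m)^{-1}\le\tfrac14$, and then uses the no-periodic-point hypothesis only once, to guarantee that $\tau_i([y-4\delta,y+4\delta])\cap\tau_{\mathbf{j},m}([y-4\delta,y+4\delta])=\emptyset$ for every $2\le m\le M$; hence the terms with $2\le m\le M$ vanish identically, and the remaining tail is small regardless of whether the orbit returns. In contrast, you allow returns at arbitrary depths but show that any two returns (including the implicit ``return'' at $n=0$) must be separated by more than $L$ steps, because a shorter gap would place a periodic point of period $\le L$ within $O(\sigma)$ of $x_q\in\widehat K$. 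Your approach is slightly more laborious (tracking an infinite sequence of returns, and invoking an adapted metric or the easy observation that iterating the inverse branch still converges to a fixed point at distance $O(\sigma)$), but it is entirely sound and yields the same conclusion. The paper's version buys simplicity by observing that once the first $M$ terms are killed, no control on later returns is needed at all.
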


\begin{proof}
Fix an integer $M\ge 2$ with
$$\lambda_*^M \ge \frac{8C_T}{C_*(\lambda_*-1)} \quad\text{for}\quad C_T:=\sup_{x\in \R/\Z}|T'(x)|>1.$$ 
Then there exists $\delta>0$ such that $\bigcup_{z\in \widehat{K}}[z-4\delta,z+4\delta]\subset[0,1]$ and such that the following hold for each $i\in \A$ and each $y\in\widehat{K}$:
\begin{itemize}
  \item $\tau_i([y-4\delta,y+4\delta])\cap \tau_j([y-4\delta,y+4\delta])=\emptyset$ for any $j\in\A\setminus\{i\}$;
  \item  $\tau_i([y-4\delta,y+4\delta])\cap\tau_{\mathbf{j},m}([y-4\delta,y+4\delta])=\emptyset$ for any word $\mathbf{j}$ and any $2\le m\le \min\{M,|\mathbf{j}|\}$. 
\end{itemize}
Note that in the last item we used the assumption that $K(f_0)$ contains no periodic point. By the choice of $\delta$, for each $i\in\A$ and each $y\in \widehat{K}$, we can easily find $\psi_{i,y}\in \mathcal{C}^1$ such that for $\widehat{\psi}_{i,y}=\psi_{i,y}\circ\pi$  we have:
\begin{itemize}
  \item $|(\widehat{\psi}_{i,y})'|\le 2C_T$ on $\R$;
  \item $(\widehat{\psi}_{i,y}\circ\tau_i)'=1$ on $[y-2\delta,y+2\delta]$;
  \item $(\widehat{\psi}_{i,y})'=0$ on $[0,1]\setminus \big(\tau_i([y-4\delta,y+4\delta])\big)$.
\end{itemize}

For each $i\in\A$ and each $y\in \widehat{K}$, by our choices of $\delta$ and $\psi_{i,y}$, when $|x-y|\le 2\delta$, for any word $\mathbf{j}$ we have $$(\widehat{\psi}_{i,y}\circ\tau_{\mathbf{j},m})'(x)=0, \quad 2\le m\le  \min\{M,|\mathbf{j}|\},$$ 
and therefore
$$\big|\Dif h^{\psi_{i,y}}_{\mathbf{j}}(x)-(\widehat{\psi}_{i,y}\circ\tau_{j_1})'(x)\big|
=\Big|\sum_{M<m\le|\mathbf{j}|} (\widehat{\psi}_{i,y}\circ\tau_{\mathbf{j},m})'(x)\Big|\le 2 C_T\sum_{m>M} (C_*\lambda_*^m)^{-1} \le \frac{1}{4}.$$
That is to say, 

$$\big|\Dif h^{\psi_{i,y}}_{\mathbf{j}}(x)-1\big| 
\le \frac{1}{4} \ \ \mbox{if}\ \ j_1=i\,;\quad \big|\Dif h^{\psi_{i,y}}_{\mathbf{j}}(x)\big| 
\le \frac{1}{4} \ \ \mbox{if}\ \ j_1\ne i.$$
By compactness, we can choose $x_q\in \widehat{K},1\le q\le Q$ such that assertion (i) holds. 
Since $\mathcal{D}$ is dense in $\mathcal{C}^1$, for each $i\in \A$ and each $q=1,\dots,Q$, we may choose $\varphi_{i,q}\in \mathcal{D}$ sufficiently close to $\psi_{i,x_q}$ in $C^1$ topology such that both \eqref{eqn:matrix1} and \eqref{eqn:matrix2} hold.
\end{proof}

\begin{proof}[Proof of Theorem~\ref{thm:Shy}] 
Let $f_0$ be as in the statement. Let $\delta$ and $\varphi_{i,q}$ be given by Lemma~\ref{lem:transversal}. Consider the following family of functions: 
	$$\varphi_{\mathbf{t}}=\sum_{i,q} t_{i,q} \varphi_{i,q},\quad \mathbf{t}=(t_{i,q})_{i\in\A, 1\le q\le Q}\in\R^{d_TQ}.$$
Fix  $f_0$ and $\delta$ as above and fix $\theta>0$ such that
$$h_{\mathrm{top}} (T|K(f_0))<\theta<\tfrac{1}{3}\log \lambda_*.$$
Let $C_0,\varepsilon_0,N,\mathcal{A}_N$ be given by Lemma~\ref{lem:complexityapp} accordingly. 
Let $\mathcal{U}$ be an open neighborhood of $f_0$ in $\mathcal{C}^2$ such that for each $f\in \mathcal{U}$,
$$\|f-f_0\|_{C^1}< \eps_0 \quad\text{and}\quad \|f-f_0\|_{C^2}< 1.$$

Our goal is to show that the set
$$\mathbf{S}(F):=\{\mathbf{t}\in [0,1]^{d_TQ}: F+ \varphi_{\mathbf{t}} \in \mathcal{U} \text{ is supercritical}\}$$
has Lebesgue measure zero for any $F\in \mathcal{C}^2$. To this end, fix $F\in \mathcal{C}^2$ and denote $F_{\mathbf{t}}=F+ \varphi_{\mathbf{t}}$ below. We make the following three observations.

Firstly, if $\mathbf{t}\in \mathbf{S}(F)$, then
$$\|F_{\mathbf{t}}-f_0\|_{C^1}<\eps_0 \quad\text{and}\quad \|F_{\mathbf{t}}-f_0\|_{C^2}<1,$$ 
and hence all the statements in Lemma~\ref{lem:complexityapp} hold for $F_{\mathbf{t}}$ instead of $f$. In particular, for any $m\ge 1$, there exist $\mathbf{u}, \mathbf{v},\mathbf{w}$ (depending on $\mathbf{t}$) as in Lemma~\ref{lem:complexityapp} such that 
	\begin{equation}\label{eqn:bc1}
		\left| \Dif h^{F_{\mathbf{t}}}_{\mathbf{u}}(\tau_{\mathbf{w},mN}(0))-\Dif h^{F_{\mathbf{t}}}_{\mathbf{v}}(\tau_{\mathbf{w},mN}(0))\right|\le C_0 \lambda_* ^{-mN}.
	\end{equation}
Moreover, by \eqref{eqn:coding base} and Lemma~\ref{lem:transversal}~(i), 
$|\tau_{\mathbf{w},mN}(0)-x_q|<2\delta$ holds for some $q$. 

Secondly, if $1\le q\le Q$ and $\mathbf{u},\mathbf{v},\mathbf{w}\in \A^{mN}$ satisfy $u_1 \ne v_1$ and  
$|\tau_{\mathbf{w},mN}(0)-x_q|\le 2\delta$, then by Lemma~\ref{lem:transversal}~(ii),  
$$  \left| \Dif h^{\varphi_{u_1,q}}_{\mathbf{u}}(\tau_{\mathbf{w},mN}(0))-\Dif h^{\varphi_{u_1,q}}_{\mathbf{v}}(\tau_{\mathbf{w},mN}(0))\right|\ge \frac{1}{3}.$$
Also note that $\Dif h^{f}_{\bullet}(\cdot)$ is a linear functional of $f$ and $F_{\mathbf{t}}$ is affine in $\mathbf{t}$. Then by Fubini's theorem, it follows that 
$$\{\mathbf{t}\in [0,1]^{d_TQ}: \text{\eqref{eqn:bc1} holds for $\mathbf{t}$}\}$$ 
has Lebesgue measure bounded from above by $6C_0\lambda_* ^{-mN}$. 

Thirdly, by \eqref{eqn:low comp}, the number of possible choices of the triple $(\mathbf{u},\mathbf{v},\mathbf{w})$ satisfying \eqref{eqn:SFT} is bounded from above by 
$$(\#\mathcal{A}_N)^{3m} < e^{3\theta mN}.$$
  
  From the three observations above we conclude that $\mathbf{S}(F)$ has Lebesgue measure bounded by $6C_0(e^{3\theta}\lambda_*^{-1})^{mN}$. Combining this with $\theta<\frac{\log\lambda_*}{3}$ and letting $m\to\infty$, it follows that $\mathbf{S}(F)$ is of Lebesgue measure zero. 
\end{proof}

\section{Shyness of non-periodic Sturmian-like maximization}\label{sec:final}
In this section, we shall complete the proof of Theorem~\ref{thm:main}, and from now on $\mathcal{H}$ denotes a Banach space as in Theorem~\ref{thm:main}. We shall assume without loss of generality that $T\in \mathcal{E}^\omega$ satisfies $T(\pi(0))=\pi(0)$. The main step is to prove the following Theorem~\ref{thm:prevalent}. Recall $K(f)$ defined in \eqref{eqn:setK} and the notations below (here and in the following we drop their dependence on $T$):

\begin{itemize}
  \item $\mathcal{H}_{\#}=\{f\in \mathcal{H}: \text{$\Omega(f,T)$ is a periodic orbit} \}$, see \eqref{eqn:sharp};
  \item $\mathcal{H}_{\mathrm{st}}=\{f\in \mathcal{H}: T|K(f) \text{ is Sturmian-like} \}$, see \eqref{eqn:st set};
  \item $\mathcal{H}_{\mathrm{per}}=\{f\in \mathcal{H}: (f,T) \text{ has a periodic maximizing measure} \}$, see \eqref{eqn:per};
  \item $\mathcal{H}_{\mathrm{sc}}=\{f\in \mathcal{H}: f\text{ is supercritical}\, \}$, see \eqref{eqn:H0}.
\end{itemize}
Moreover,  denote
$$\mathcal{H}_{\mathrm{u}}=\{f\in \mathcal{H}: f \text{ has a unique maximizing measure}\}.$$
The main result in~\cite{Mor21} ensures that $\mathcal{H}\setminus \mathcal{H}_{\mathrm{u}}$ is a shy subset of $\mathcal{H}$.

\begin{thm}\label{thm:prevalent}
$(\mathcal{H}_{\mathrm{u}}\cap \mathcal{H}_{\mathrm{st}})\setminus \mathcal{H}_{\mathrm{per}}$ is a countable union of $1$-dimensionally shy subsets of $\mathcal{H}$.
\end{thm}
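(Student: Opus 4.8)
The plan is to exploit the one-dimensional structure of $\R/\Z$ together with the Sturmian-like property to extract a rigid identity satisfied by every $f\in(\mathcal{H}_u\cap\mathcal{H}_{st})\setminus\mathcal{H}_{per}$, and then to destroy that identity by a one-parameter perturbation, as the outline of the introduction suggests (Proposition~\ref{prop:identity} and Theorem~\ref{thm:prevalent1}). First I would fix $f\in(\mathcal{H}_u\cap\mathcal{H}_{st})\setminus\mathcal{H}_{per}$. Since $f$ has a unique maximizing measure, Lemma~\ref{lem:omegaK} gives $K(f)=\Omega(f,T)$, so $K(f)$ is a $T$-invariant compact set carrying a unique invariant measure, i.e.\ $T|K(f)$ is uniquely ergodic; since $f\notin\mathcal{H}_{per}$, the unique maximizing measure is non-periodic, hence $K(f)$ contains no periodic orbit and $T|K(f)$ is a non-periodic minimal set. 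Being Sturmian-like, $K(f)$ has only finitely many critical values (the argument: at a critical value one of the two one-sided continuous inverse branches disagrees with the canonical coding, which by expansion/minimality can happen at only finitely many points — essentially the combinatorics sketched in the commented-out Proposition on non-periodic minimal Sturmian-like sets), and at each such point exactly two preimages lie in $K(f)$ with the two inverse branches approaching from opposite sides. The key structural consequence is that there is a ``cocycle'' / coboundary rigidity: writing $g$ for a calibrated sub-action, for $x\in\pi^{-1}(K(f))$ the quantity $h^f_{\mathbf i}(x)$ is, by Lemma~\ref{lem:holonomy}, independent of the admissible coding $\mathbf i\in\mathcal{CL}(x,K(f))$ and equals $g(\pi(x))-g(\pi(x_0))+$const; away from the finitely many branching points the admissible coding is unique, so $g$ is locally given by a convergent series $\sum_n\widehat f\circ\tau_{\mathbf i,n}$ in $x$, and matching these local expressions across a branching point yields an identity of the form
\begin{equation}\label{eqn:pp-identity}
 h^f_{\mathbf i}(c)-h^f_{\mathbf j}(c)=0
\end{equation}
at each branching point $c$, where $\mathbf i,\mathbf j$ are the two codings with distinct first letters. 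This is the ``identity'' I would record as the analogue of Bousch's pre-Sturmian condition.

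Next I would set up the perturbation. Enumerate countably many combinatorial ``types'': a type records a finite word $\mathbf i_0$, a finite word $\mathbf j_0$ with $(\mathbf i_0)_1\neq(\mathbf j_0)_1$, and a small rational interval $I\subset(0,1)$, together with the data that $I$ meets $\pi^{-1}(K(f))$ and that $c$, the purported branching point, lies in $I$ with codings extending $\mathbf i_0,\mathbf j_0$. For a fixed type, the set of $f\in(\mathcal{H}_u\cap\mathcal{H}_{st})\setminus\mathcal{H}_{per}$ realizing it is contained in the zero set of the functional $f\mapsto \Phi(f):=h^f_{\mathbf i}(c(f))-h^f_{\mathbf j}(c(f))$, where $c(f)$ is the branching point (which depends on $f$ through $K(f)$, but lies in the fixed interval $I$). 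The point is that for a well-chosen $\varphi\in\mathcal{H}$ supported combinatorially so that it ``sees'' the first letter $(\mathbf i_0)_1$ but not $(\mathbf j_0)_1$ — exactly the kind of bump functions $\psi_{i,y}$/$\varphi_{i,q}$ built in Lemma~\ref{lem:transversal} — the derivative $\frac{d}{dt}\Phi(f+t\varphi)$ at $t=0$ is bounded away from $0$, uniformly over the type and over $f$ in a suitable neighborhood, even accounting for the motion of $c(f)$ and of $K(f+t\varphi)$. Indeed $h^{f+t\varphi}_{\mathbf i}$ is affine in $t$ for fixed $x$, so $\Phi(f+t\varphi)=A(t)+B(t)\cdot(\text{motion of }c)$ with $A$ affine and slope $\approx\Dif h^{\varphi}_{\mathbf i}(c)-\Dif h^\varphi_{\mathbf j}(c)$ near $1$; the envelope theorem / the fact that $c(f)$ is a critical value kills the first-order contribution of $dc/dt$ up to controllable error. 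Hence $t\mapsto\Phi(f+t\varphi)$ has at most one zero on any short interval, so the realizing set is $1$-dimensionally shy; the point is that outside $\mathcal{H}_{per}$ there genuinely \emph{is} a branching point, so $\Phi$ is defined and the identity \eqref{eqn:pp-identity} is nontrivial content. Taking a countable union over types and invoking Proposition~\ref{prop:shylocal} (to localize in $\mathcal{C}^1$ so that the neighborhood-uniform estimates apply) gives that $(\mathcal{H}_u\cap\mathcal{H}_{st})\setminus\mathcal{H}_{per}$ is a countable union of finite dimensionally shy sets.

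The main obstacle I expect is controlling the dependence of the branching point $c(f)$ — and more broadly of the whole set $K(f)$ and its combinatorics — on $f$ under the perturbation $f+t\varphi$. Upper semicontinuity of $f\mapsto K(f)$ (Proposition~\ref{prop:setK}) is only a one-sided statement, and $K$ need not vary continuously, so $c(f+t\varphi)$ need not be a nice function of $t$; one must argue that nonetheless it stays in the fixed small interval $I$ for $|t|$ small and that its first-order motion cannot conspire to keep $\Phi(f+t\varphi)$ at zero. The device to handle this is precisely that $c$ is a \emph{critical value}, so $\Dif h^{f+t\varphi}_{\mathbf i}(c)=\Dif h^{f+t\varphi}_{\mathbf j}(c)$ along the relevant codings (supercritical-value-freeness from Corollary~\ref{cor:stscK} / the Sturmian-like structure), which is exactly what makes the $dc/dt$ term drop out to first order — so the perturbation argument must be interleaved with the no-supercritical-value property rather than using it only as a black box. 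Making this quantitative, uniformly over the countably many combinatorial types and over a $\mathcal{C}^1$-neighborhood, is the technical heart; once it is in place the shyness bookkeeping is routine via Propositions~\ref{prop:shycountable} and~\ref{prop:shylocal}.
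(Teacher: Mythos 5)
Your high-level plan (extract an identity forced on every $f\in(\mathcal{H}_u\cap\mathcal{H}_{st})\setminus\mathcal{H}_{per}$, then kill it by a one-parameter perturbation) matches the paper's, but the two concrete ingredients you supply do not work, and the actual mechanisms in the paper are different. First, the identity. ``Matching the local expressions for $g$ across a branching point $c$'' gives nothing: both one-sided expressions are anchored at $g(\pi(c))$ itself, so the matching condition is $0=0$, not $h^f_{\mathbf i}(c)=h^f_{\mathbf j}(c)$. Indeed $h^f_{\mathbf i}(c)-h^f_{\mathbf j}(c)$ involves the backward orbits of the base point $0$ under the two codings, and $0$ need not lie in $K(f)$, so Lemma~\ref{lem:holonomy} says nothing about this difference; it is not an invariant of the dynamics on $K(f)$ and there is no reason it vanishes. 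The identity the paper actually proves (Proposition~\ref{prop:identity}) is of a different nature: it computes $g(a_2(f))-g(a_1(f))$ in two ways for two preimages $a_1(f),a_2(f)$ of a branching point under $T^{n_1},T^{n_2}$ --- once by pulling the arc $(a_1(f),a_2(f))$ backward under $\tau^f$ (giving $\int f'\psi_f\dif x$ with $\psi_f=\sum_n\ind_{W_f^n}$), and once along the forward orbits (giving a finite Birkhoff-sum expression plus $(n_2-n_1)\int f\dif\mu_f$).

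Second, the transversality. Your mechanism for showing $t\mapsto\Phi(f+t\varphi)$ has isolated zeros relies on an ``envelope'' cancellation $\Dif h^{f}_{\mathbf i}(c)=\Dif h^{f}_{\mathbf j}(c)$ to suppress the unknown motion of $c(f)$. But that equality of derivatives is precisely the definition of a \emph{supercritical} value, which is what fails (generically, and after Corollary~\ref{cor:nosc}, for all $f$ we still need to treat) in the Sturmian-like regime: at a regular critical value the two one-sided derivatives of $g$ are genuinely different, so the first-order contribution of $\dif c/\dif t$ does \emph{not} drop out, and $c(f)$ is only upper semi-continuous in $f$ anyway. The paper's way around this is quantitative rather than differential: because $\|\psi_f\|_\infty=\infty$ while $\psi_f\in L^1$ (Lemma~\ref{lem:varphi}), one can choose $\varphi$ with $\int\varphi'\psi_f\dif x\ge L$ for $L$ much larger than $n_1+n_2+1$ while $|\varphi|\le 1$ on $K(f)$; feeding $F_{t_1},F_{t_2}$ into the identity then yields $|t_1-t_2|\le C\,\delta(\log\delta)^2$ with $\delta$ the displacement of the critical points, and one concludes by covering the possible positions of the critical-point vector using the low complexity of a ``marked frame'' (which exists by the zero-entropy result of \cite{GS24} via Lemma~\ref{lem:compatible frame}). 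This last covering step, which converts the estimate $|t_1-t_2|\lesssim\delta(\log\delta)^2$ into Lebesgue measure zero, is entirely absent from your proposal and cannot be dispensed with, since without it the uncontrolled motion of $c(F_t)$ could a priori sweep out a positive-measure set of parameters.
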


\begin{proof}[Proof of Theorem~\ref{thm:main}] By Proposition~\ref{prop:shycountable}, it suffices to show that $\mathcal{H}\setminus \mathcal{H}_{\#}$ is a countable union of finite-dimensionally shy subsets of $\mathcal{H}$. To this end, let us write 
	$$\mathcal{X}_1=\mathcal{H}\setminus \mathcal{H}_{\mathrm{u}}, \quad \mathcal{X}_2=(\mathcal{H}_{\mathrm{u}}\cap \mathcal{H}_{\mathrm{st}})\setminus \mathcal{H}_{\mathrm{per}}, \quad \mathcal{X}_3=\mathcal{H}_{\mathrm{u}}\setminus(\mathcal{H}_{\mathrm{st}}\cup\mathcal{H}_{\mathrm{sc}}),$$  
	$$\mathcal{X}_4=\mathcal{H}_{\mathrm{sc}}\setminus \mathcal{H}_{\mathrm{per}},\quad \mathcal{X}_5=\mathcal{H}_{\mathrm{per}}\setminus \mathcal{H}_\#.$$
Then 
$$\mathcal{H}\setminus \mathcal{H}_{\#} \subset \bigcup_{i=1}^5\mathcal{X}_i,$$
and it remains to prove that for $1\le i\le 5$, $\mathcal{X}_i$ is a countable union of finite-dimensionally shy subsets of $\mathcal{H}$. This is verified as follows. 	
	\begin{itemize}
		\item For $i=1$, in \cite{Mor21} it was implicitly proved that $\mathcal{H}\setminus \mathcal{H}_{\mathrm{u}}$ is a countable union of $1$-dimensionally shy subsets of $\mathcal{H}$. More specifically, given $g\in\mathcal{H}$, let
$$\mathcal{P}_g=\{f\in\mathcal{H}:  s\mapsto \beta(f+sg,T) \text{ is differentiable at $0$} \}.$$
By the convexity of $\beta(\cdot,T)$, the set $\{t\in[0,1]: f+tg \notin \mathcal{P}_g\}$ is countable for any $f\in\mathcal{H}$, so $\mathcal{H}\setminus \mathcal{P}_g$ is a $1$-dimensionally shy subset of $\mathcal{H}$. On the other hand, Morris \cite{Mor21} showed that if $\{g_n\}_{n=1}^\infty\subset \mathcal{H}$ and it is dense in $\mathcal{C}^0$, then $\mathcal{H}_u=\cap_{n=1}^\infty \mathcal{P}_{g_n}$. 
 
		\item For $i=2$, this follows from Theorem~\ref{thm:prevalent}. 
		\item For $i=3$, in fact $\mathcal{X}_3=\emptyset$. This follows from Corollary~\ref{cor:stscK}.
		\item For $i=4$, this follows from Corollary~\ref{cor:nosc}.
		\item For $i=5$, this follows from Theorem~\ref{thm:permax}.
	\end{itemize}
The proof is done.
\end{proof}

In the rest of this section, we shall prove Theorem~\ref{thm:prevalent}. For each $f\in \mathcal{H}_{\mathrm{st}}$, let
$$\Crit(f)=\left\{x\in K(f): \#\big(T^{-1}(x)\cap K(f)\big)>1\right\}$$
be the collection of critical values of $K(f)$, which is a finite set by definition. Also note that if $f\notin \mathcal{H}_{\mathrm{per}}$ additionally, then $\Crit(f)$ is non-empty. This is because, if $\Crit(f)$ is empty, then $T:K(f)\to K(f)$ 
is bijective, and hence it is well-known that $K(f)$ consists of (finitely many) periodic points; see for example \cite{RW04}.  

We shall further decompose $(\mathcal{H}_{\mathrm{u}}\cap \mathcal{H}_{\mathrm{st}})\setminus \mathcal{H}_{\mathrm{per}}$ into a countable union of subsets  
so that in each of the subsets, all performance functions are compatible with a specified ``marked frame", which, roughly speaking, means that 
the dynamics $T: K(f)\to K(f)$ moves ``upper semi-continuously". Motivated by ~\cite{Bou00}, for $f\in (\mathcal{H}_{\mathrm{u}}\cap \mathcal{H}_{\mathrm{st}})\setminus \mathcal{H}_{\mathrm{per}}$, we shall construct a special function $\psi_f$ and show that all such $f$ satisfy an identity of the same type (see Proposition~\ref{prop:identity}). Then we show that functions satisfying such an identity form a $1$-dimensionally shy subset (see Theorem~\ref{thm:prevalent1}).

\subsection{Marked frames}
Let us start with the decomposition that will be specified in \eqref{eqn:Hdecomp}. For any positive integer $n$, let $\mathcal{P}_n$ denote the collection of open arcs of the following form:
$$\pi(\tau_{i_n}\circ\dots\circ \tau_{i_2}\circ\tau_{i_1}(0,1)), \quad i_1i_2\dots i_n\in \A^n,$$
where $\A=\{0,1\dots,d_T-1\}$ as before. For an open subset $U$ of $\R/\Z$, a connected component of $U$ will be called a {\em component} for short.

\begin{defn}\label{defn:frame}
We define a {\em frame} to be a pair $(V, V')$ such that 
\begin{itemize}
	\item there exists a positive integer $N$ such that $V$ is a finite union of elements of $\mathcal{P}_N$;
	\item $V'$ is a subset of $V$ which is a union of components of $T^{-1}(V)$.
\end{itemize}
Given an integer $p\ge 1$, a frame $(V, V')$ is called a {\em marked frame of type $p$} if the following hold. 
\begin{itemize}
	\item There are distinct components $V_1, V_2, \dots, V_p$ of $V$ such that for each $1\le \nu\le p$, $T^{-1}(V_\nu)\cap V'=V'_{\nu,+}\cup V'_{\nu,-}$, where $V'_{\nu,+}$ and $V'_{\nu,-}$ are (not necessarily distinct) elements of $\mathcal{P}_{N+1}$; 
	\item for each component $U$ of $V\setminus \bigcup_{\nu=1}^p V_\nu$, $T^{-1}(U)\cap V'$ is an element of $\mathcal{P}_{N+1}$.
\end{itemize} 
\end{defn}

Given $f\in \mathcal{H}_{\mathrm{st}}\setminus \mathcal{H}_{\mathrm{per}}$, we say that a marked frame $(V, V')$ of type $p$ as above is {\em $f$-compatible} if the following hold.
\begin{itemize}
\item $T(V')\supset K(f)$ and $p=\# \Crit(f)$. 
	\item For each $1\le \nu\le p$, $V_\nu$ contains exactly one point in $\Crit(f)$, and if $x\in K(f)$ 
	is in the right (resp. left) component of $V_\nu\setminus \Crit(f)$, then $(T|V'_{\nu,+})^{-1}(x)$ (resp. $(T|V'_{\nu,-})^{-1}(x)$) 
	is the only point in $T^{-1}(x)\cap K(f)$; moreover, $(T|V'_{\nu,\pm})^{-1}(\Crit(f)\cap V_\nu)\subset K(f)$. 
	\item For each component $U$ of $V\setminus \bigcup_{\nu=1}^p V_\nu$, $T^{-1}(U)\cap K(f)\subset V'$. 
\end{itemize}

\begin{rmk} 
It should be mentioned that in the second item above, $T^{-1}(V_\nu)\cap K(f)$ may not be a subset of $V'$, because the unique point in $V_\nu\cap \Crit(f)$ might have additional pre-image(s) in $K(f)\setminus V'$. Such pre-images are isolated in $K(f)$ and will be ignored in the construction of $\tau^f$ after Lemma~\ref{lem:crit cont}. 

For $T_2(x)=2x\mod 1$, let us give a simple example to explain these definitions. Let $S_{x_0}$ be as given in Example~\ref{exa:Sturmian-like}. Let us choose $x_0=\pi(\widehat{x}_0)$ for some $\widehat{x}_0\in (0,1/4)\setminus\mathbb{Q}$ such that  $T_2(x_0)\in S_{x_0}$. By \cite{Bou00}, $S_{x_0}$ can be realized as $K(f)$ for certain $f\in \mathcal{H}_{\mathrm{st}}\setminus \mathcal{H}_{\mathrm{per}}$, and $T_2(x_0)$ is the unique critical value of $S_{x_0}$.  Then for 
$$V=\pi\left((0, \tfrac{1}{2})\cup (\tfrac{1}{2}, 1)\right), \quad V'=\pi\left((0, \tfrac{1}{4})\cup (\tfrac{1}{4}, \tfrac{1}{2})\cup (\tfrac{1}{2}, \tfrac{3}{4})\right) $$
with 
$$V_1=\pi\left((0, \tfrac{1}{2})\right), \quad V'_{1,+}=\pi\left((0, \tfrac{1}{4})\right), \quad V'_{1,-}=\pi\left((\tfrac{1}{2}, \tfrac{3}{4})\right),$$
$(V,V')$ is a marked frame of type $1$ and it is $f$-compatible.
\end{rmk}

The {\em complexity} $\text{Comp}(V, V')$ of a frame $(V, V')$ is defined as follows. For each positive integer $n$, let 
$$V^n=\{x\in V': T(x), T^2(x), \dots, T^n(x) \in V\}$$ 
and 
$$\text{Comp}(V, V')=\limsup_{n\to\infty} \frac{1}{n} \log \#\{\text{components of } V^n\}.$$

\begin{lem}\label{lem:compatible frame} For each $f\in \mathcal{H}_{\mathrm{st}}\setminus \mathcal{H}_{\mathrm{per}}$ and 
	each $\eps>0$, there is a marked frame that is $f$-compatible with complexity less than $\eps$.
\end{lem}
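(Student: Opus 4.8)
\textbf{Proof plan for Lemma~\ref{lem:compatible frame}.}
The plan is to build the marked frame by taking a sufficiently fine dynamical partition and discarding all pieces that do not meet $K(f)$. Since $f\in\mathcal{H}_{st}\setminus\mathcal{H}_{per}$, the set $K(f)$ is Sturmian-like, $T(K(f))=K(f)$, and $P(f)=\{x\in K(f):\#(T^{-1}(x)\cap K(f))>1\}$ is a non-empty finite set, say $P(f)=\{c_1,\dots,c_p\}$ with $p=\#P(f)$. By the definition of Sturmian-like, each $c_\nu$ has a connected neighbourhood $U_\nu$ such that on each of the two components of $U_\nu\setminus\{c_\nu\}$ the inverse branch of $T$ that stays in $K(f)$ is a fixed, locally defined continuous branch $\tau_{V'_{\nu,\pm}}$. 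First I would fix disjoint such neighbourhoods $U_\nu$, small enough that their closures are pairwise disjoint and avoid all the (finitely many) remaining critical values and their relevant preimages; this uses that $K(f)$ has no periodic points, so the forward orbit of each $c_\nu$ can be separated from the inverse branch data.

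Next I would choose $N$ large so that every element of $\mathcal{P}_N$ meeting $K(f)$ has diameter much smaller than the separation distances above. Set $V$ to be the union of all $W\in\mathcal{P}_N$ with $\overline W\cap K(f)\neq\emptyset$ — this is a finite union of elements of $\mathcal{P}_N$ and is an open neighbourhood of $K(f)$, since $T^{-1}$ expansion forces the mesh of $\mathcal{P}_N$ restricted to a neighbourhood of $K(f)$ to zero (by \eqref{eqn:C*lambda*}). The components of $V$ that contain a point of $P(f)$ are the $V_1,\dots,V_p$; by fineness of $N$ each $V_\nu$ contains exactly one $c_\nu$. For the other components $U$ of $V$, fineness guarantees that $T^{-1}(U)\cap K(f)$ lies in a single element of $\mathcal{P}_{N+1}$; define $V'$ as the union over all these $U$ of that single $\mathcal{P}_{N+1}$-piece, together with, for each $\nu$, the two $\mathcal{P}_{N+1}$-pieces $V'_{\nu,+},V'_{\nu,-}$ which are the $\mathcal{P}_{N+1}$-components of $T^{-1}(V_\nu)$ containing $\tau_{V'_{\nu,+}}(K(f)\cap V_\nu^{+})$ and $\tau_{V'_{\nu,-}}(K(f)\cap V_\nu^{-})$ respectively (where $V_\nu^\pm$ are the two components of $V_\nu\setminus\{c_\nu\}$). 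One then checks $V'\subset V$ (again by fineness: preimages of pieces near $K(f)$ land in pieces near $K(f)$, hence in $V$), and that $(V,V')$ satisfies all four clauses of Definition~\ref{defn:frame}, while $f$-compatibility ($T(V')\supset K(f)$, the matching of inverse branches on $V_\nu^\pm$, and $T^{-1}(U)\cap K(f)\subset V'$ for the remaining $U$) follows directly from the construction and the Sturmian-like property.

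Finally, for the complexity bound: the components of $V^n$ correspond to admissible length-$n$ itineraries of the Markov-type coding induced by $(V,V')$, and since $V$ is a shrinking neighbourhood of $K(f)$ as $N\to\infty$, the exponential growth rate $\mathrm{Comp}(V,V')$ of these itineraries can be made as close as desired to $h_{top}(T|K(f))$; in particular, by taking $N$ large enough we get $\mathrm{Comp}(V,V')<\eps$ (one can in fact arrange $h_{top}(T|K(f))=0$ in the situations where this lemma is applied, but the soft upper-semicontinuity argument suffices for any $\eps>0$). The main obstacle I anticipate is the bookkeeping in the second step: one must verify simultaneously that $N$ is large enough that (a) each branching component $V_\nu$ isolates exactly one point of $P(f)$, (b) the two chosen preimage pieces $V'_{\nu,\pm}$ are genuinely elements of $\mathcal{P}_{N+1}$ contained in $V$ and carry the correct inverse branch of $K(f)$, and (c) for every non-branching component $U$ the entire set $T^{-1}(U)\cap K(f)$ sits inside one $\mathcal{P}_{N+1}$-piece — all three of which hinge on uniform expansion plus the absence of periodic points in $K(f)$ to keep the relevant finite configuration of orbits and inverse branches separated.
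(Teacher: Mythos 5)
Your construction of the frame itself is essentially the paper's: take the union $V$ of the $\mathcal{P}_N$-pieces meeting $K(f)$ for $N$ large, use the Sturmian-like property to single out, near each of the $p=\#P(f)$ branching points, the two locally defined inverse branches that carry $K(f)$, and assemble $V'$ from the corresponding $\mathcal{P}_{N+1}$-pieces. That part is fine.

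The gap is in the complexity step. You write that the growth rate $\mathrm{Comp}(V,V')$ ``can be made as close as desired to $h_{top}(T|K(f))$'' and that ``the soft upper-semicontinuity argument suffices for any $\eps>0$,'' with zero entropy relegated to a parenthetical as something ``one can arrange.'' This has the logic backwards: upper semicontinuity of the itinerary count only drives the complexity down to a neighborhood of $h_{top}(T|K(f))$, so it yields $\mathrm{Comp}(V,V')<\eps$ for \emph{every} $\eps>0$ only if $h_{top}(T|K(f))=0$. That vanishing is not a convenience one arranges; it is an essential input, supplied by the result of \cite{GS24} (every maximizing measure of a real-analytic $f$ over a real-analytic expanding map has zero entropy, hence $h_{top}(T|K(f))=0$ by Proposition~\ref{prop:setK}(iv) and the variational principle — note $f\notin\mathcal{H}_{per}$ rules out the degenerate case where all invariant measures are maximizing). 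This is precisely where the hypothesis $\mathcal{H}\subset\mathcal{C}^\omega$ enters the lemma, and the paper's proof invokes it explicitly to get $q(n)<e^{n\eps/2}$ for the number of components of $V(n)$. A secondary point: since $(V,V')$ is not a Markov partition, identifying components of $V^n$ with ``admissible itineraries'' needs justification; the paper's subexponential bound $\#\{\text{components of }V^k(n)\}\le \mathbbm{d}^n q(n)^{\ell+1}$ (writing $k=\ell n+r$) is the concrete counting argument your sketch is missing.
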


\begin{proof} For each $x\in \R/\Z$, let $\widehat{x}$ denote the unique point in $\pi^{-1}(x)\cap [0,1)$.
	For each $x\in K(f)\setminus \Crit(f)$, let $i(x)$ denote the unique element in $\A$ such that 
	$$\tau_{i(x)}(\widehat{x})\in \pi^{-1}(K(f)).$$
	Then $x\mapsto i(x)$ is a continuous map from $K(f)\setminus \Crit(f)$ to $\A$, and hence locally constant. Moreover, since $K(f)$ is Sturmian-like, there exists $N$ such that for each $U\in \mathcal{P}_N$, $\#(U\cap \Crit(f))\le 1$, and for each component $B$ of $U\setminus \Crit(f)$, either $B\cap K(f)=\emptyset$ or $x\mapsto i(x)$ is constant on $B\cap K(f)$.
	
	For each $n\ge N$, let $V(n)$ denote the union of elements of $\mathcal{P}_n$ which intersect $K(f)$. For each component $U$ of $V(n)$, define $U'\subset T^{-1}(U)$ as follows.
\begin{itemize}
  \item If $U\cap \Crit(f)=\emptyset$, then take $U'$ as the unique component of $T^{-1}(U)$ that intersects $K(f)$.  
  \item If $U\cap \Crit(f)=\{c\}$ for some $c\in \Crit(f)$, let $U_+$ (resp. $U_-$) be the right (resp. left) component of $U\setminus\{c\}$. For $\alpha\in \{+,-\}$, let $U_\alpha'$ be a component of $T^{-1}(U)$ such that $T^{-1}(U_\alpha\cap K(f))\subset U_\alpha'$ and such that $U_\alpha'\cap K(f)\cap T^{-1}(c)\not=\emptyset$. Note that if $U_\alpha\cap K(f)\not=\emptyset$, then $U_\alpha'$ is uniquely determined, and otherwise there are multiple choices of $U_\alpha'$. Finally, we take $U'=U_+'\cup U_-'$ ($U_+'$ and $U_-'$ may coincide). 
\end{itemize} 
Let $V'(n)$ be the union of all $U'$ defined as above. The frame $(V(n), V'(n))$ is obviously marked so that the resulting marked frame is $f$-compatible. 
	
	Let $s(n)$ denote the complexity of the frame $(V(n), V'(n))$. Fix $\eps>0$. It remains to show that  $s(n)<\eps$ when $n$ is large enough.
	Let $q(n)$ denote the number of components of $V(n)$.  Since $T|K(f)$ has topological entropy zero by~\cite{GS24}, $q(n)< e^{n\eps/2}$ 
	provided that $n$ is large enough. 
	For each $k\ge n$, let
	$$V^k(n)=\{x\in V'(n): T(x), T^2(x), \dots, T^{k}(x)\in V(n)\}.$$
	Write $k=\ell n+r$ with $\ell\ge 1$ and $0\le r<n$. Then 
	 $V^k(n)$ is contained in 
	$$W:=\{x\in V(n): T^{jn}(x)\in V(n) \text{ for } 1\le j\le \ell\},$$
and each component of $W$ contains at most $d_T^r$ components of $V^{k}(n)$. Thus
$$\#\{\text{components of }V^k(n)\} \le d_T^r\cdot \#\{\text{components of } W\}\le d_T^n\cdot q(n)^{\ell+1}.$$
Combining this with $q(n)< e^{n\eps/2}$ and letting $k\to\infty$, it follows that $s(n)<\eps$ provided that $n$ is large enough. 
\end{proof}

Obviously, there are only countably many marked frames. Fix $C_*>0$ and $\lambda_*>1$ satisfying \eqref{eqn:C*lambda*} from now on.  Let $\mathscr{M}$
denote the collection of marked frames $\mathcal{M}$ such that $\mathcal{M}$ is of type $p$ for some $p\ge 1$ and with complexity less than $\tfrac{1}{p}\log\lambda_*$. For each $\mathcal{M}\in \mathscr{M}$, let $\mathcal{H}^{\mathcal{M}}$ denote the collection of $f\in (\mathcal{H}_{\mathrm{u}}\cap \mathcal{H}_{\mathrm{st}})\setminus \mathcal{H}_{\mathrm{per}}$ 
such that $\mathcal{M}$ is $f$-compatible. Then by Lemma~\ref{lem:compatible frame}, 
\begin{equation}\label{eqn:Hdecomp}
	 (\mathcal{H}_{\mathrm{u}}\cap \mathcal{H}_{\mathrm{st}})\setminus \mathcal{H}_{\mathrm{per}} = \bigcup_{\mathcal{M}\in \mathscr{M}}\mathcal{H}^{\mathcal{M}}.
\end{equation}

\subsection{The identity}

Let us fix a marked frame $\mathcal{M}=(V,V')$ of type $p\ge 1$ and with complexity less than $\frac{1}{p}\log\lambda_*$, and follow the notations $V_\nu,V'_{\nu, +}, V'_{\nu, -},1\le \nu\le p$ introduced in Definition~\ref{defn:frame}. Given $f\in \mathcal{H}^{\mathcal{M}}$, for each $1\le \nu\le p$, let $c_\nu(f)$ denote the unique point in $\Crit(f)\cap V_\nu$, and let $V_{\nu, +}$ (resp. $V_{\nu, -}$) 
denote the right (resp. left) component of $V_\nu\setminus \{c_\nu(f)\}$.

\begin{lem}\label{lem:crit cont}
	For each $1\le \nu\le p$, $f\mapsto c_\nu(f)$ is continuous on $\mathcal{H}^{\mathcal{M}}$ with respect to the $C^1$ topology on $\mathcal{H}^{\mathcal{M}}$.
\end{lem}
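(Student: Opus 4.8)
The plan is to establish sequential continuity. Fix $\nu$, let $f_n\to f$ in the $C^1$ topology with all of $f_n,f\in\mathcal H^{\mathcal M}$, and aim to show $c_\nu(f_n)\to c_\nu(f)$. Since $\overline{V_\nu}$ is compact, it suffices to prove that every subsequential limit $c^*$ of the sequence $(c_\nu(f_n))_n$ coincides with $c_\nu(f)$; so, passing to a subsequence, assume $c_\nu(f_n)\to c^*$. Recalling that $f$-compatibility of $\mathcal M$ forces $P(f)\cap V_\nu=\{c_\nu(f)\}$, the whole task reduces to proving $c^*\in P(f)\cap V_\nu$.

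First I would check that $c^*\in K(f)$: since $c_\nu(f_n)\in P(f_n)\subset K(f_n)$ and $f\mapsto K(f)$ is upper semi-continuous (Proposition~\ref{prop:setK}(iii)), the limit $c^*$ lies in $K(f)$. Next I would show that $c^*$ lies in the open arc $V_\nu$ and not on its boundary. The two endpoints of $V_\nu$, which is a component of a union of $\mathcal P_N$-cells, belong to $T^{-N}(\pi(0))$. If some point of $T^{-N}(\pi(0))$ were in $K(f)$, then applying $T^{N}$ and using $T(K(f))=K(f)$ we would get $\pi(0)\in K(f)$; but then $\delta_{\pi(0)}$ is an invariant measure supported in $K(f)$, hence a maximizing measure of $(f,T)$ by Proposition~\ref{prop:setK}(iv), and it is a periodic measure, contradicting $f\notin\mathcal H_{per}$. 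Therefore $\partial V_\nu\cap K(f)=\emptyset$, and together with $c^*\in K(f)$ this yields $c^*\in V_\nu$.

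It remains to show that $c^*$ is a critical value of $K(f)$, i.e.\ $\#\big(T^{-1}(c^*)\cap K(f)\big)\ge 2$. Since $V_\nu$ is an open arc that is not all of $\R/\Z$, the degree-$\mathbbm{d}$ covering $T$ admits $\mathbbm{d}$ continuous inverse branches $\zeta^{(0)},\dots,\zeta^{(\mathbbm{d}-1)}$ on $V_\nu$, with pairwise disjoint images and $T^{-1}(x)=\{\zeta^{(j)}(x):0\le j<\mathbbm{d}\}$ for each $x\in V_\nu$. Because $c_\nu(f_n)\in P(f_n)$, at least two indices $j$ satisfy $\zeta^{(j)}(c_\nu(f_n))\in K(f_n)$; by the pigeonhole principle, after passing to a further subsequence there are fixed indices $j_1\ne j_2$ with $\zeta^{(j_1)}(c_\nu(f_n)),\zeta^{(j_2)}(c_\nu(f_n))\in K(f_n)$ for all $n$. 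Letting $n\to\infty$, continuity of $\zeta^{(j_1)},\zeta^{(j_2)}$ at $c^*\in V_\nu$ together with upper semi-continuity of $f\mapsto K(f)$ gives $\zeta^{(j_1)}(c^*),\zeta^{(j_2)}(c^*)\in K(f)$, and these are two distinct points of $T^{-1}(c^*)$ since distinct inverse branches of a covering map never coincide. Hence $c^*\in P(f)$, and combined with $c^*\in V_\nu$ we conclude $c^*=c_\nu(f)$.

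\textbf{Main obstacle.} The delicate step is the second one: ruling out that the limit $c^*$ runs off to an endpoint of $V_\nu$. This is exactly where the hypothesis $f\notin\mathcal H_{per}$ enters, through the observation that endpoints of $\mathcal P_N$-cells are iterated $T$-preimages of the fixed point $\pi(0)$ and hence cannot lie in $K(f)$. A secondary point worth noting is that in the third step it is cleaner to apply pigeonhole to which of the $\mathbbm{d}$ inverse branches land in $K(f_n)$ than to track the specific preimages in $V'_{\nu,+}$ and $V'_{\nu,-}$, since a priori $c_\nu(f_n)$ need not be approached within $K(f_n)$ from both sides.
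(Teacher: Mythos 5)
Your proof is correct and follows essentially the same route as the paper's: pass to a subsequential limit $c^*$ of $c_\nu(f_n)$, exhibit two distinct points of $T^{-1}(c^*)\cap K(f)$ as limits of distinct preimages in $K(f_n)$ via the upper semi-continuity of $f\mapsto K(f)$, and conclude $c^*\in P(f)\cap V_\nu=\{c_\nu(f)\}$. The only cosmetic differences are that the paper takes accumulation points of the two chosen preimages directly rather than pigeonholing over inverse branches, and it excludes $\partial V_\nu$ by noting $K(f)\subset T(V')\subset V$ from $f$-compatibility, whereas you use the (also valid) observation that $\partial V_\nu\subset T^{-N}(\pi(0))$ and $\pi(0)\notin K(f)$ since $f\notin\mathcal{H}_{per}$.
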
  
\begin{proof} Suppose $f_n,f\in \mathcal{H}^{\mathcal{M}}$ with $f_n\to f$ in $C^1$ topology. For each $1\le \nu\le p$, it suffices to show that $c_\nu(f)$ is the unique limit point of $\{c_\nu(f_n)\}$. For simplicity of notation, we may further assume that $\{c_\nu(f_n)\}$ itself converges. Let
$x_{n,+}$ and $x_{n,-}$ be two distinct points in $T^{-1}(c_\nu(f_n))\cap K(f_n)$ (these points may lie outside $V'$). 
Passing to a subsequence, we may assume that $x_{n,+}\to x_+$ and $x_{n,-}\to x_-$ simultaneously for some $x_+,x_-$. Since $T(x_{n,+})=T(x_{n,-})\in V_\nu$,  $T(x_+)=T(x_-)$ is contained in the closure of $V_\nu$. By the upper semi-continuity of $K(\cdot)$, we have $x_+, x_-\in K(f)$. Since $T$ is locally injective, we have $x_+\ne x_-$.  Note also that $T(K(f))=K(f)$ does not meet the boundary of $V$. It follows that $T(x_+)=T(x_-)\in \Crit(f)\cap V_\nu$ and hence $T(x_+)=T(x_-)=c_\nu(f)$. Consequently, $c_\nu(f_n)\to c_\nu(f)$. 
\end{proof}

We define a continuous map 
$$\tau^f: V\setminus \Crit(f)\to V'$$
such that $T\circ \tau^f= \mathrm{id}$ as follows:
\begin{itemize}
	\item for each $1\le \nu\le p$, $\tau^f(V_{\nu, +})\subset V'_{\nu,+}$ and $\tau^f(V_{\nu,-})\subset V'_{\nu,-}$;
	\item for any $x\in V\setminus (\bigcup_{\nu=1}^p V_\nu)$, $\tau^f(x)$ is the only point in $T^{-1}(x)\cap V'$.
\end{itemize} 
Note that $\tau^f$ is not defined at any $c\in \Crit(f)$; however, the one-sided limits $\lim_{x\to c^+} \tau^f(x)$ and $\lim_{x\to c^-} \tau^f(x)$ exist and lie in $K(f)$. 

From now on for $f\in \mathcal{H}^{\mathcal{M}}\subset \mathcal{H}_{\mathrm{u}}$, let $\mu_f$ denote its unique maximizing measure. Let $\mathrm{supp}(\cdot)$ denote the support of a measure. For distinct $x,y\in \R/\Z$, let $(x,y)$ denote the open arc in $\R/\Z$ defined by $(x,y)=\pi((\widehat{x},\widehat{y}))$ for $\widehat{x}\in \pi^{-1}(x)$ and $\widehat{y}\in \pi^{-1}(y)\cap [\widehat{x},\widehat{x}+1)$.

\begin{lem}\label{lem:crit value cont} Let $f_0\in \mathcal{H}^{\mathcal{M}}$. 
	There exists a neighborhood $\mathcal{U}$ of $f_0$ in $\mathcal{C}^1$, $\nu_0\in \{1,2,\dots, p\}$, positive integers $n_1, n_2$ and 
	continuous maps $a_1, a_2: \mathcal{U}\cap \mathcal{H}^{\mathcal{M}}\to V$ with respect to the $C^1$ topology on $\mathcal{U}\cap \mathcal{H}^{\mathcal{M}}$, such that for any $f\in \mathcal{U}\cap \mathcal{H}^{\mathcal{M}}$ the following hold:
	\begin{itemize}
		\item $(a_1(f),a_2(f))\subset V$ and $(a_1(f),a_2(f))\cap \mathrm{supp}(\mu_{f})\ne\emptyset$;
		\item $T^{n_1}(a_1(f))=T^{n_2}(a_2(f))=c_{\nu_0}(f)$;
		\item For each $0\le k<n_1$ and $0\le l<n_2$, $T^k(a_1(f)), T^l(a_2(f))\in K(f)\setminus \Crit(f)$. 
	\end{itemize}
\end{lem}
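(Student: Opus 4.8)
The plan is to produce the data $\nu_0, n_1, n_2$ together with the points $a_1(f_0), a_2(f_0)$ at the base point first, and then to propagate everything continuously to a small neighbourhood $\mathcal{U}$ of $f_0$ in $\mathcal{C}^1$, using the $f$-compatibility of $\mathcal{M}$, Lemma~\ref{lem:crit cont}, and the upper semi-continuity of $f\mapsto K(f)$.

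For the base point, I would first record the structural consequences of $f_0\in\mathcal{H}^{\mathcal{M}}\subset(\mathcal{H}_u\cap\mathcal{H}_{st})\setminus\mathcal{H}_{per}$: by Lemma~\ref{lem:omegaK}, $K(f_0)=\Omega(f_0,T)$, so $K(f_0)$ carries no periodic orbit, $\mu_{f_0}$ is non-atomic, $S:=\mathrm{supp}(\mu_{f_0})$ is infinite and perfect, and $P(f_0)\neq\emptyset$. Since $T|S$ cannot be injective — otherwise $S$ would be a finite union of periodic orbits by~\cite{RW04}, contradicting $f_0\notin\mathcal{H}_{per}$ — some $c^*\in S$ satisfies $\#(T^{-1}(c^*)\cap S)>1$, hence $c^*\in P(f_0)\cap S$. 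Working inside the finite set $P(f_0)\cap S$, a no-cycle argument (for each such point pick a preimage lying in $S$, which exists since $T(S)=S$; a cycle among branch points would be a periodic orbit, excluded) produces an index $\nu_0$ such that $c_{\nu_0}(f_0)\in S$ and $c_{\nu_0}(f_0)$ has a preimage $a\in S\setminus P(f_0)$. The remaining task is to pull good preimages of $c_{\nu_0}(f_0)$ back along the fixed inverse branches furnished by the Sturmian-like structure of $K(f_0)$ near $c_{\nu_0}(f_0)$, until two of them lie in a single component of $V$ and straddle a point of $S$: using that $c_{\nu_0}(f_0)\in\Omega(f_0,T)$ and that $S$ accumulates on it, one obtains arbitrarily short arcs $I\subset V_{\nu_0}$ with $\overline{I}\ni c_{\nu_0}(f_0)$, with $I\cap S\neq\emptyset$, and with both endpoints good preimages of $c_{\nu_0}(f_0)$. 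Taking $(a_1(f_0),a_2(f_0))$ to be one such $I$ fixes $n_1,n_2$ and the branch words describing the backward orbits of $a_1(f_0)$ and $a_2(f_0)$ to $c_{\nu_0}(f_0)$.

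For the propagation, those backward orbits are compositions of inverse branches all attached to the frame $\mathcal{M}$; for $f\in\mathcal{H}^{\mathcal{M}}$ one sets $a_i(f)$ equal to that same composition applied to $c_{\nu_0}(f)$. Then $a_i$ is continuous on $\mathcal{U}\cap\mathcal{H}^{\mathcal{M}}$ with respect to the $C^1$ topology by Lemma~\ref{lem:crit cont}, and $T^{n_i}(a_i(f))=c_{\nu_0}(f)$ by construction. That $a_i(f)$ and its forward orbit of length $n_i$ stay in $K(f)\setminus P(f)$ is precisely what $f$-compatibility of $\mathcal{M}$ encodes along this orbit: each intermediate point lies in a prescribed cell of $\mathcal{M}$, its $K(f)$-preimage is forced into the next prescribed cell, and (after shrinking $\mathcal{U}$) it remains on the definite side of whichever $c_\nu(f)$ it is on at $f_0$, hence clear of $P(f)$. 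Finally, the arc condition persists because the maximizing measure is unique: $\mu_f\to\mu_{f_0}$ weakly as $f\to f_0$, so $\mathrm{supp}(\mu_f)$ eventually meets any fixed open arc meeting $S$; choosing a point of $S$ and a small open arc around it compactly contained in $(a_1(f_0),a_2(f_0))$ — hence contained in $(a_1(f),a_2(f))$ once $\mathcal{U}$ is small — yields $(a_1(f),a_2(f))\cap\mathrm{supp}(\mu_f)\neq\emptyset$. I expect the main obstacle to be the base-point construction, namely manufacturing two good preimages of a branch point that simultaneously land in one component of $V$ (so the joining arc is admissible) and straddle a point of $\mathrm{supp}(\mu_{f_0})$; this is where the Sturmian-like property and the non-wandering description of $K(f_0)=\Omega(f_0,T)$ have to be used with care, whereas the propagation step is soft.
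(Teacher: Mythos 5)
Your overall architecture (construct the data at $f_0$, then propagate by following the same inverse branches from $c_{\nu_0}(f)$) matches the paper, and your propagation step is essentially the paper's: continuity comes from Lemma~\ref{lem:crit cont}, and the condition $(a_1(f),a_2(f))\cap\mathrm{supp}(\mu_f)\neq\emptyset$ persists by weak-$*$ continuity of $f\mapsto\mu_f$ on the set of uniquely maximized functions. The problem is the base-point construction, which you correctly flag as the crux but do not actually carry out, and your chosen route has a genuine gap. First, selecting $\nu_0$ so that $c_{\nu_0}(f_0)\in\mathrm{supp}(\mu_{f_0})$ with one preimage in $S\setminus P(f_0)$ only controls the first step of the backward orbit: nothing prevents a longer backward orbit of such a $c_{\nu_0}(f_0)$ from hitting $P(f_0)$ again at a much later time (a backward orbit can meet each branch point once, at arbitrarily late times), so the requirement $T^k(a_i(f_0))\notin P(f_0)$ for all $0\le k<n_i$ is not secured. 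Second, the assertion that one obtains arbitrarily short arcs $I\subset V_{\nu_0}$ with $\overline{I}\ni c_{\nu_0}(f_0)$, $I\cap S\neq\emptyset$, and both endpoints ``good'' preimages of $c_{\nu_0}(f_0)$ is exactly the statement to be proved and is not delivered by $c_{\nu_0}(f_0)\in\Omega(f_0,T)$: the definition of the non-wandering set produces preimages $z\in T^{-n}(x)$ close to $x$ with small Birkhoff deviation, but neither $z$ nor its intermediate images need lie in $K(f_0)\setminus P(f_0)$. You are also imposing an unnecessary extra constraint ($\overline{I}\ni c_{\nu_0}(f_0)$) that makes the task harder.

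The paper's two ideas that you are missing are the following. (1) Choose $\nu_0$ so that $c=c_{\nu_0}(f_0)$ is \emph{not in the forward orbit of any point of} $P(f_0)$; such a choice exists because $P(f_0)$ is finite and a cycle among branch points would give a periodic orbit in $K(f_0)$. With this choice, \emph{every} backward orbit $(z_{-n})$ of $c$ inside $K(f_0)$ automatically satisfies $z_{-n}\notin P(f_0)$ for all $n\ge1$, since $z_{-n}\in P(f_0)$ would put $c=T^n(z_{-n})$ in the forward orbit of a branch point. (2) For such a backward orbit, any weak-$*$ accumulation point of the empirical measures $\frac1n\sum_{k=0}^{n-1}\delta_{z_{-k}}$ is an invariant measure of $T|K(f_0)$, hence a maximizing measure by Proposition~\ref{prop:setK}(iv), hence equal to $\mu_{f_0}$ by uniqueness; therefore the closure of $\{z_{-n}\}$ contains $\mathrm{supp}(\mu_{f_0})$. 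Since $\mathrm{supp}(\mu_{f_0})$ is perfect and contained in $V$, one then picks $n_1,n_2$ with $z_{-n_1},z_{-n_2}$ in a single component of $V$ straddling a point of the support; there is no need for these points to be near $c_{\nu_0}(f_0)$ at all. Your argument that $P(f_0)\cap\mathrm{supp}(\mu_{f_0})\neq\emptyset$ is correct but turns out to be unnecessary.
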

\begin{proof}
	Take $\nu_0\in \{1,2,\dots, p\}$ such that $c=c_{\nu_0}(f_0)\in \Crit(f_0)$ is not in the forward orbit of any $c'\in \Crit(f_0)$ under iteration of $T$. 
	Such a $\nu_0$ exists for otherwise, $K(f_0)$ would contain a periodic orbit.  
	Take a backward orbit $(z_{-n})_{n=0}^\infty$ in $K(f_0)$ with $z_0=c$ and $z_{-1}=\lim_{x\to c^+}\tau^{f_0}(x)$. 
	Then $z_{-n}\not\in \Crit(f_0)$ for each $n\ge 1$. We claim that the closure of $\{z_{-n}\}_{n=0}^\infty$ contains $\mathrm{supp}(\mu_{f_0})$. 
	Indeed, any accumulation point of $\frac{1}{n}\sum_{k=0}^{n-1} \delta_{z_{-k}}$ in the weak-* topology is a maximizing measure of $f_0$. 
	Since $\mu_{f_0}$ is the only maximizing measure of $f_0$, the statement follows. We may then choose $n_1, n_2\ge 1$ and $a_i(f_0)=z_{-n_i}$, $i=1,2$, such that the desired properties hold for $f=f_0$ (since $\mu_{f_0}$ has no atom, we can guarantee that $(a_1({f_0}),a_2({f_0}))\cap \mathrm{supp}(\mu_{{f_0}})\ne\emptyset$).
By continuity and Lemma~\ref{lem:crit cont}, the construction can be extended to a $C^1$ neighborhood of $f_0$ in $\mathcal{H}^{\mathcal{M}}$. More precisely, we can define a sequence of $C^1$ neighborhoods $\mathcal{U}_1\supset \mathcal{U}_2\supset\cdots$ of $f_0$ in $\mathcal{H}^{\mathcal{M}}$ and continuous maps $b_n:\mathcal{U}_n \to V$ with $b_n(f_0)=z_{-n}$ for every $n\ge 1$ as follows: first let $b_1(f)=\lim_{x\to c_{\nu_0}(f)^+}\tau^{f}(x)$, which is well-defined and continuous on some $\mathcal{U}_1$ by Lemma~\ref{lem:crit cont} and the construction of $\tau^f$; then define $b_{n+1}(f)=\tau^f(b_n(f))$ inductively on $n$ (shrinking the domain if necessary in this process). As a consequence, $a_i(f):=b_{n_i}(f)$ is continuous on $\mathcal{U}_{n_i}$ for $i=1,2$, and all the desired properties hold by choosing $\mathcal{U}$ small enough. 
\end{proof}

Recall that $d(\cdot,\cdot)$ denotes the standard metric on $\R/\Z$. Shrinking $\mathcal{U}$ in Lemma~\ref{lem:crit value cont} if necessary, there exists $C_0 \ge 1$ such that 
\begin{equation}\label{eqn:af}
\begin{split}
	& \max_{k=0}^{n_1-1} d(T^k(a_1(f_1)), T^k(a_1(f_2)))\le C_0\cdot d(c_{\nu_0}(f_1), c_{\nu_0}(f_2)),\\
	& \max_{l=0}^{n_2-1} d(T^l(a_2(f_1)), T^l(a_2(f_2)))\le C_0 \cdot d(c_{\nu_0}(f_1), c_{\nu_0}(f_2)),  
\end{split}
\end{equation}
for any $f_1, f_2\in \mathcal{U}\cap\mathcal{H}^{\mathcal{M}}$. We shall fix $\mathcal{U}$ such that both Lemma~\ref{lem:crit value cont} and \eqref{eqn:af} hold.

For each $f\in \mathcal{U}\cap \mathcal{H}^{\mathcal{M}}$, let $W_f^0=(a_1(f),a_2(f))$ and define inductively $W_f^k=\tau^f (W_f^{k-1}\setminus \Crit(f))$ for each $k\ge 1$. 
Note that $W_f^k$ is a finite union of open arcs bounded by points in $K(f)$. Let $\ind_E$ denote the indicator function of a set $E$ and define
$$\psi_f=\sum_{n=1}^\infty \ind_{W_f^n}.$$
For $1\le q\le \infty$, let $L^q(\R/\Z)$ denote the space of real-valued $L^q$ (with respect to the Lebesgue measure) functions on $\R/\Z$ and let $\|\cdot\|_q$ denote the associated $L^q$-norm. Let $|\cdot|$ denote the standard Lebesgue measure on $\R/\Z$.

\begin{lem}\label{lem:varphi} For each $f\in \mathcal{U}\cap \mathcal{H}^{\mathcal{M}}$,
	$$\psi_f\in L^q(\R/\Z),\ 1\le q<\infty \quad \text{but} \quad \|\psi_f\|_{\infty}=\infty.$$
\end{lem}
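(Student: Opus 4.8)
The two assertions are of a different character and I would establish them separately.

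\emph{Integrability.} The plan is to bound $|W_f^n|$ geometrically. First, since $\tau^f$ is a section of $T$ it is injective and continuous on $V\setminus P(f)$, so it maps each component of $W_f^{n-1}\setminus P(f)$ homeomorphically onto an open arc, and distinct components onto disjoint arcs; consequently the number $K_n$ of components of $W_f^n$ satisfies $K_n\le K_{n-1}+\#P(f)$, hence $K_n\le 1+np$ with $p=\#P(f)$. Second, each component $C$ of $W_f^n$ has $T^j(C)\subset V$ for $0\le j<n$ and $T^n|_C$ injective with image an open sub-arc of $W_f^0$, its local inverse being a composition of $n$ inverse branches of $T$; therefore $|C|\le (C_*\lambda_*^{\,n})^{-1}|W_f^0|$ by \eqref{eqn:C*lambda*}. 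Combining, $|W_f^n|\le (1+np)(C_*\lambda_*^{\,n})^{-1}|W_f^0|$, and for $1\le q<\infty$,
\[
\|\psi_f\|_q\le\sum_{n=1}^\infty\|\ind_{W_f^n}\|_q=\sum_{n=1}^\infty|W_f^n|^{1/q}\le \Big(\tfrac{|W_f^0|}{C_*}\Big)^{1/q}\sum_{n=1}^\infty (1+np)^{1/q}\lambda_*^{\,-n/q}<\infty,
\]
because $\lambda_*>1$. This gives $\psi_f\in L^q(\R/\Z)$ for every $q<\infty$.

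\emph{Non-boundedness.} Here I would use that $\mathcal{M}$ is $f$-compatible. The crux is the identity
\[
W_f^n\cap K(f)=T^{-1}\big(W_f^{n-1}\setminus P(f)\big)\cap K(f),\qquad n\ge 1.
\]
The inclusion ``$\subset$'' follows from $T\circ\tau^f=\mathrm{id}$. For ``$\supset$'', let $x\in K(f)$ with $y:=T(x)\in W_f^{n-1}\setminus P(f)$. Since $W_f^{n-1}\subset V$ — which holds for $n-1=0$ by Lemma~\ref{lem:crit value cont} and for $n-1\ge1$ because $W_f^{n-1}\subset V'\subset V$ — the point $y$ lies in the interior of exactly one component of $V$; as $y\in K(f)$ and $y\notin P(f)$, the definition of $f$-compatibility tells us that $y$ has a unique $T$-preimage in $K(f)$ and that this preimage is $\tau^f(y)$ (matching the side of $c_\nu(f)$ on which $y$ sits, resp.\ the single branch over a component of $V\setminus\bigcup_\nu V_\nu$, with the definition of $\tau^f$). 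Hence $x=\tau^f(y)\in W_f^n$. Granting the identity, and using $\mathrm{supp}(\mu_f)\subset K(f)$, the $T$-invariance of $\mu_f$, and the fact that $\mu_f$ is non-atomic (it is ergodic, being the unique maximizing measure, and an ergodic measure with an atom is periodic, contradicting $f\notin\mathcal{H}_{per}$), one computes
\[
\mu_f(W_f^n)=\mu_f\big(W_f^n\cap K(f)\big)=\mu_f\big(T^{-1}(W_f^{n-1}\setminus P(f))\big)=\mu_f\big(W_f^{n-1}\setminus P(f)\big)=\mu_f(W_f^{n-1}).
\]
Thus $\mu_f(W_f^n)=\mu_f(W_f^0)$ for all $n\ge0$, and $\mu_f(W_f^0)>0$ since $W_f^0$ is open and meets $\mathrm{supp}(\mu_f)$ by Lemma~\ref{lem:crit value cont}. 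Consequently $\int\psi_f\,\dif\mu_f=\sum_{n\ge1}\mu_f(W_f^n)=\infty$, so $\psi_f$ is not essentially bounded with respect to $\mu_f$; in particular $\sup_{x\in\R/\Z}\psi_f(x)=\infty$. Finally, each $W_f^n$ is open, so $\psi_f=\sup_N\sum_{n=1}^N\ind_{W_f^n}$ is lower semicontinuous; hence for every $M$ the set $\{\psi_f>M\}$ is open and non-empty, therefore of positive Lebesgue measure, and so $\|\psi_f\|_\infty\ge M$. Letting $M\to\infty$ gives $\|\psi_f\|_\infty=\infty$.

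The step I expect to be the main obstacle is the displayed identity $W_f^n\cap K(f)=T^{-1}(W_f^{n-1}\setminus P(f))\cap K(f)$: it is exactly the place where the bookkeeping of an $f$-compatible marked frame — which branch of $\tau^f$ carries which side of each critical point $c_\nu(f)$, and uniqueness of the $K(f)$-preimage away from the $V_\nu$'s — has to be matched precisely against the definition of $\tau^f$, with due care for points of $W_f^{n-1}$ sitting close to (but not on) the endpoints of the $V_\nu$.
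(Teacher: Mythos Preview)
Your proof is correct. For the $L^q$ part you make explicit what the paper states tersely (``$|W_f^n|$ is exponentially small''): the linear component count $K_n\le 1+np$ together with the contraction bound on each component is exactly the content behind the paper's sentence. For $\|\psi_f\|_\infty=\infty$, both arguments rest on the same key fact coming from $f$-compatibility, namely that for $x\in K(f)$ with $T(x)\in V\setminus P(f)$ one has $\tau^f(T(x))=x$; you package this as the identity $W_f^n\cap K(f)=T^{-1}(W_f^{n-1}\setminus P(f))\cap K(f)$ and deduce $\mu_f(W_f^n)=\mu_f(W_f^0)>0$, hence $\int\psi_f\,d\mu_f=\infty$, and finish with lower semicontinuity. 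The paper instead uses Poincar\'e recurrence to find a single $x_0\in W_f^0\cap K(f)$ whose forward orbit avoids $P(f)$ and returns to $W_f^0$ infinitely often, then (implicitly via the same compatibility fact) obtains $x_0\in W_f^{n_k}$ for infinitely many $n_k$ and uses openness of the $W_f^n$. Your route is a clean measure-theoretic reformulation of the same mechanism; the paper's is slightly more direct but leaves the compatibility step unstated. The ``main obstacle'' you flag is not actually an obstacle: since $W_f^{n-1}\subset V$ is open, every $y\in W_f^{n-1}\setminus P(f)$ lies in the interior of some $V_{\nu,\pm}$ or of a component of $V\setminus\bigcup_\nu V_\nu$, and the $f$-compatibility clauses apply without any boundary subtleties.
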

\begin{proof} Since $T$ is expanding and $\tau^f$ is injective, $|W_f^n|$ is exponentially small in $n$, 
	which implies that $\psi_f\in L^q(\R/\Z)$ for each $1\le q<\infty$.
	
Now let us show $\|\psi_f\|_{\infty}=\infty$. Since $\mu_f((a_1(f),a_2(f)))>0$ and $\mu_{f}$ has no atom, by the Poincar\'e recurrence theorem, there exists $x_0\in (a_1(f),a_2(f))\cap K(f)$ such that $x_0$ returns to $(a_1(f),a_2(f))$ infinitely often under iteration of $T$, and $T^n(x_0)\not\in \Crit(f)$ for each $n$. To complete the proof, it suffices to show that for each positive integer $N$, there exists a neighborhood $U_N$ of $x_0$ such that $\psi_f> N$ on $U_N$. To this end, let $0=n_0<n_1<n_2<\cdots$ be nonnegative integers such that $T^{n_j}(x_0)\in (a_1(f), a_2(f))$, $\forall j\ge 0$. By continuity, there exists an open arc $U_N$ containing $x_0$ such that $T^k(U_N)\subset V\setminus \Crit(f)$ for each $0\le k\le n_N$, and $T^{n_i}(U_N)\subset (a_1(f), a_2(f))$ for each $0\le i\le N$. Consequently, $U_N\subset W_f^{n_i}$ for each $0\le i\le N$. Thus $\psi_f>N$ on $U_N$. 
\end{proof}

\begin{prop}\label{prop:identity} For each $f\in \mathcal{U}\cap\mathcal{H}^{\mathcal{M}}$, the following identity holds:
\begin{equation}\label{eqn:identity}
\begin{split}
& \int_{\R/\Z} f'(x)\psi_f(x)  \dif x \\
= & \sum_{k=0}^{n_1-1} f(T^k(a_1(f)))-\sum_{k=0}^{n_2-1} f(T^k(a_2(f)))+(n_2-n_1)\int f \dif\mu_f.
\end{split}
\end{equation}
\end{prop}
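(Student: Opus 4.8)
The plan is to show that each side of \eqref{eqn:identity} equals $g(a_2(f))-g(a_1(f))$, where $g$ is any Lipschitz calibrated sub-action of $(f,T)$. Since $f\in\mathcal{H}_u$, Lemma~\ref{lem:omegaK} gives $K(f)=\Omega(f,T)=\Lambda_{f,T,g}$, so $K(f)\subset E_{f,T,g}$ and $\int f\dif\mu_f=\beta(f,T)$. For the right-hand side, note that by the last item of Lemma~\ref{lem:crit value cont} we have $T^k(a_i(f))\in K(f)\subset E_{f,T,g}$ for $0\le k\le n_i$ ($i=1,2$), hence $f(T^k(a_i(f)))=g(T^{k+1}(a_i(f)))-g(T^k(a_i(f)))+\beta(f,T)$ for $0\le k<n_i$. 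Telescoping and using $T^{n_i}(a_i(f))=c_{\nu_0}(f)$,
\[
\sum_{k=0}^{n_i-1}f(T^k(a_i(f)))=g(c_{\nu_0}(f))-g(a_i(f))+n_i\beta(f,T),\qquad i=1,2.
\]
Substituting these into the right-hand side of \eqref{eqn:identity} and using $\int f\dif\mu_f=\beta(f,T)$, the $\beta(f,T)$-terms and the $g(c_{\nu_0}(f))$-terms cancel, leaving exactly $g(a_2(f))-g(a_1(f))$.

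For the left-hand side the main point is a change of variables. For each $n\ge 1$ the map $T^n$ restricts to an orientation-preserving homeomorphism of $W_f^n$ onto $W_f^0$ with a finite set $E_n$ removed: one checks $T^n|_{W_f^n}$ is injective with image $W_f^0\setminus E_n$, where $E_n=\bigcup_{m=0}^{n-1}T^m(P(f)\cap W_f^m)$. Consequently $W_f^n$ has exactly $\#E_n+1$ components, this number grows at most linearly in $n$, and since each component of $W_f^n$ has length $\le (C_*\lambda_*^n)^{-1}$ we get $\sum_n|W_f^n|<\infty$. Writing $\Phi_n:=(T^n|_{W_f^n})^{-1}$, on each component of its domain $\Phi_n$ is a composition $\tau_{\mathbf{i},n}$ of inverse branches, so the change of variables formula yields $\int_{W_f^n}f'=\int_{W_f^0}(\widehat f\circ\Phi_n)'$ (with lifts to $\R$ as needed). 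For a.e.\ $y\in W_f^0$ the $\tau^f$-orbit of $y$ is defined for all times and avoids $P(f)$, hence determines an itinerary $\mathbf{i}(y)\in\A^{\mathbb{N}}$ with $\Phi_n(y)=\tau_{\mathbf{i}(y),n}(y)$; summing over $n$ (the interchange justified by $\sum_n|W_f^n|<\infty$) gives
\[
\int_{\R/\Z}f'(x)\psi_f(x)\dif x=\int_{W_f^0}\Dif h^f_{\mathbf{i}(y)}(y)\dif y.
\]

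Next I would partition $W_f^0=(a_1(f),a_2(f))$ into the maximal subintervals $J_\alpha=(p_\alpha,q_\alpha)$ on which $y\mapsto\mathbf{i}(y)$ is constant, say $\equiv\mathbf{i}_\alpha$; these are disjoint, cover $W_f^0$ up to a countable (hence null) set of points of the form $T^m(c_\nu(f))$, and their endpoints lie in $K(f)$ because they are limits of endpoints of components of the sets $W_f^m$, which are in $K(f)$. Moreover $\Phi_n$ restricted to $J_\alpha$ equals $\tau_{\mathbf{i}_\alpha,n}$, and letting $y\to p_\alpha^+$ (resp.\ $y\to q_\alpha^-$) inside $J_\alpha$ shows $\tau_{\mathbf{i}_\alpha,n}(p_\alpha),\tau_{\mathbf{i}_\alpha,n}(q_\alpha)\in\pi^{-1}(K(f))$ for every $n$, i.e.\ $\mathbf{i}_\alpha\in\mathcal{CL}(p_\alpha,K(f))\cap\mathcal{CL}(q_\alpha,K(f))$; here one uses $K(f)=\Lambda_{f,T,g}$. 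Lemma~\ref{lem:holonomy} then gives, for each $\alpha$,
\[
\int_{J_\alpha}\Dif h^f_{\mathbf{i}_\alpha}(y)\dif y=h^f_{\mathbf{i}_\alpha}(q_\alpha)-h^f_{\mathbf{i}_\alpha}(p_\alpha)=g(\pi(q_\alpha))-g(\pi(p_\alpha)).
\]
Summing over $\alpha$, and using that $g$ is Lipschitz (hence absolutely continuous) together with $\bigcup_\alpha J_\alpha=(a_1(f),a_2(f))$ up to a null set, we obtain $\int f'\psi_f=g(a_2(f))-g(a_1(f))$, which matches the right-hand side.

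The hard part is the bookkeeping in the change of variables and in the construction of the partition $\{J_\alpha\}$: one must describe precisely how $W_f^n$ splits into arcs as $n$ grows (a new arc appearing exactly when one of the current arcs straddles a point $c_\nu(f)$), set up the symbolic itinerary $\mathbf{i}(\cdot)$, and --- most delicately --- verify that each $J_\alpha$ has both endpoints in $K(f)$ and that $\mathbf{i}_\alpha$ is a genuine backward coding of both of them, which is exactly what licenses the use of Lemma~\ref{lem:holonomy}. It is at these points that the $f$-compatibility of the marked frame $\mathcal{M}$ enters --- in particular the fact, recorded just after the definition of $W_f^k$, that each $W_f^k$ is a finite union of open arcs with endpoints in $K(f)$.
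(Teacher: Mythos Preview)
Your overall strategy --- show both sides equal $g(a_2(f))-g(a_1(f))$ for a Lipschitz calibrated sub-action $g$ --- is exactly the paper's, and your treatment of the right-hand side is identical.

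For the left-hand side the paper takes a shorter route. For an open set $U\subset V$ with $\partial U\subset K(f)$ it sets $\mathbf{O}_g(U)=\sum_j\bigl(g(v_j)-g(u_j)\bigr)$ (summing over components $(u_j,v_j)$ of $U$), checks the one-step identity
\[
\mathbf{O}_g(U)=\mathbf{O}_g\bigl(\tau^f(U\setminus P(f))\bigr)+\int_{\tau^f(U\setminus P(f))}f'(x)\dif x,
\]
which only uses $g\circ T-g=f-\beta$ at the finitely many boundary points $u_j,v_j\in K(f)\subset E_{f,T,g}$, iterates to obtain
\[
g(a_2)-g(a_1)=\mathbf{O}_g(W_f^n)+\sum_{k=1}^n\int_{W_f^k}f'\dif x,
\]
and lets $n\to\infty$, using $\mathbf{O}_g(W_f^n)=O(|W_f^n|)\to0$ since $g$ is Lipschitz. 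No infinite itineraries, no Lemma~\ref{lem:holonomy}.

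Your route --- change variables to $W_f^0$, partition by full itinerary, apply Lemma~\ref{lem:holonomy} on each $J_\alpha$ --- is correct in principle but more delicate, and the justification you give for the key step is not quite the right one. The limit $\tau_{\mathbf{i}_\alpha,n}(p_\alpha)=\lim_{y\to p_\alpha^+}\Phi_n(y)$ lies in $\overline{W_f^n}$, but $\Phi_n(J_\alpha)$ need not be a \emph{full} component of $W_f^n$ (it sits inside the level-$n$ interval $\Phi_n(J_\alpha^n)$), so that limit need not be a boundary point of $W_f^n$. What actually gives $\tau_{\mathbf{i}_\alpha,n}(p_\alpha)\in K(f)$ is the fact that the one-sided extension of $\tau^f$ sends $K(f)$ into $K(f)$ --- precisely the fact underlying the paper's remark that each $W_f^k$ is bounded by points of $K(f)$ --- after which a straightforward induction on $k$ (using that $\tau_{\mathbf{i}_\alpha,k-1}$ is orientation-preserving, so the approach is still from the right) does the job. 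So your argument can be completed, but the paper's finite recursion avoids the whole issue.
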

\begin{proof} 
	Let $g$ be a Lipschitz sub-action of $f$. Note that by Lemma~\ref{lem:omegaK}~(ii), $K(f)\subset E_{f,T,g}$. We first prove 
	\begin{equation}\label{eqn:gableft}
		g(a_2(f))-g(a_1(f))=\int_{\R/\Z} f'(x)\psi_f(x) \dif x.
	\end{equation}
	To this end, for each open set $U$ of $\R/\Z$ with $U\not=\R/\Z$, let us write it as a disjoint union of open arcs $J_j=(u_j,v_j)$, and define
	$$\mathbf{O}_g(U)=\sum_j (g(v_j)-g(u_j)).$$
	Provided $\partial U\subset K(f)$ ($\partial U$ stands for the boundary of $U$ in $\R/\Z$) and $U\subset V$,
	$$\mathbf{O}_g(U)=\mathbf{O}_g(\tau^f(U\setminus \Crit(f)))+ \int_{\tau^f(U\setminus \Crit(f))} f'(x) \dif x.$$
	To check this, it suffices to consider the case $U$ is connected and it is a routine task.
	
	Therefore, for each $n=1,2,\dots$,
	$$g(a_2(f))-g(a_1(f))=\mathbf{O}_g(W_f^n)+\sum_{i=1}^n \int_{W_f^i} f'(x) \dif x.$$
	As $n\to\infty$, since $g$ is Lipschitz and $T$ is expanding,
$$\mathbf{O}_g(W_f^n)=O(|W_f^n|)\to 0;$$
at the same time, 
$$\sum_{i=1}^n 1_{W_f^i}  \to \psi_f  ~~\text{in}~~ L^1(\R/\Z).$$
Thus \eqref{eqn:gableft} follows.
	
	On the other hand, since the orbits of $a_1(f)$ and $a_2(f)$ are contained in the $g$-action set $E_{f,T,g}$ and since $T^{n_1}(a_1(f))=T^{n_2}(a_2(f))$, we have
	$$g(a_2(f))-g(a_1(f))=\sum_{k=0}^{n_1-1} f(T^k(a_1(f)))-\sum_{k=0}^{n_2-1} f(T^k(a_2(f)))+(n_2-n_1)\int f \dif\mu_f.$$
	Together with \eqref{eqn:gableft}, this implies the identity \eqref{eqn:identity}. 
\end{proof}

\subsection{The perturbation}
\begin{lem}\label{lem:psimove} 
	There exists a constant $C_1 \ge 1$ such that for each $f_1, f_2\in \mathcal{U}\cap \mathcal{H}^{\mathcal{M}}$, 
	$$\|\psi_{f_1}-\psi_{f_2}\|_1\le C_1 \delta  (\log \delta)^2,$$ 
	where $($the right hand side means $0$ when $\delta = 0$$)$

\begin{equation}\label{eqn:delta crit}
  \delta=\delta(f_1, f_2):=\max_{\nu=1}^p d(c_\nu(f_1), c_\nu(f_2))\in [0,\tfrac{1}{2}].
\end{equation}
\end{lem}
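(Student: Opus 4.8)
The plan is to exploit the recursive definition $W_f^k=\tau^f(W_f^{k-1}\setminus P(f))$ and to compare the iterates of $f_1$ and $f_2$ level by level. First I would record three ingredients. Since the marked frame $\mathcal{M}=(V,V')$ is fixed, the inverse branches $V_\nu\to V_{\nu,+}'$ and $V_\nu\to V_{\nu,-}'$ are the same for all $f\in\mathcal{H}^{\mathcal{M}}$, so the maps $\tau^{f_1}$ and $\tau^{f_2}$ coincide on $V\setminus(P(f_1)\cup P(f_2))$ outside the set $A:=\bigcup_{\nu=1}^{p}A_\nu$, where $A_\nu$ is the open arc bounded by $c_\nu(f_1)$ and $c_\nu(f_2)$; in particular $|A|\le p\delta$. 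By \eqref{eqn:af} the endpoints obey $d(a_i(f_1),a_i(f_2))\le C_0\delta$, so the single arcs $W_{f_1}^0=(a_1(f_1),a_2(f_1))$ and $W_{f_2}^0=(a_1(f_2),a_2(f_2))$ satisfy $|W_{f_1}^0\triangle W_{f_2}^0|\le 2C_0\delta$. Finally, because each $\tau^f$ is an injective section of $T$ and $W_f^0\subseteq V$ (Lemma~\ref{lem:crit value cont}), the map $T^k$ is injective on $W_f^k$ with image contained in $W_f^0$, and hence by \eqref{eqn:C*lambda*} one has $|W_f^k|\le (C_*\lambda_*^k)^{-1}|W_f^0|\le (C_*\lambda_*^k)^{-1}$ for all $k\ge 0$.

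The heart of the proof is a recursive estimate for $e_k:=|W_{f_1}^k\triangle W_{f_2}^k|$. Working modulo null sets (dropping the finite sets $P(f_i)$) and using $W_{f_i}^k=\tau^{f_i}(W_{f_i}^{k-1})$, I would write
\[
W_{f_1}^k\triangle W_{f_2}^k\ \subseteq\ \tau^{f_1}\!\big(W_{f_1}^{k-1}\triangle W_{f_2}^{k-1}\big)\ \cup\ \big[\tau^{f_1}(W_{f_2}^{k-1})\triangle \tau^{f_2}(W_{f_2}^{k-1})\big].
\]
On the second set the two maps are fed the same input, and since $\tau^{f_1}=\tau^{f_2}$ off $A$ this set is contained in $\tau^{f_1}(W_{f_2}^{k-1}\cap A)\cup\tau^{f_2}(W_{f_2}^{k-1}\cap A)$. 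As every branch of $\tau^{f}$ is a contraction with a uniform factor $\theta<1$ and is injective, taking Lebesgue measures yields $e_k\le \theta e_{k-1}+2\theta\,|W_{f_2}^{k-1}\cap A|$. Unwinding this and using $\|\psi_{f_1}-\psi_{f_2}\|_1\le\sum_{k\ge1}\|\ind_{W_{f_1}^k}-\ind_{W_{f_2}^k}\|_1=\sum_{k\ge1}e_k$, one obtains
\[
\|\psi_{f_1}-\psi_{f_2}\|_1\ \le\ \frac{1}{1-\theta}\Big(\theta\,e_0+2\theta\sum_{k\ge 0}|W_{f_2}^{k}\cap A|\Big).
\]

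To finish I would estimate $\sum_{k\ge0}|W_{f_2}^k\cap A|$ by splitting at $k_0:=\lceil\log(1/(C_*p\delta))/\log\lambda_*\rceil$. For $k<k_0$ use the crude bound $|W_{f_2}^k\cap A|\le |A|\le p\delta$, contributing $\le k_0\,p\delta=O(\delta|\log\delta|)$; for $k\ge k_0$ use $|W_{f_2}^k\cap A|\le|W_{f_2}^k|\le(C_*\lambda_*^k)^{-1}$, contributing $\le\sum_{k\ge k_0}(C_*\lambda_*^k)^{-1}=O(\lambda_*^{-k_0})=O(\delta)$. Combined with $e_0\le 2C_0\delta$ this gives $\|\psi_{f_1}-\psi_{f_2}\|_1=O(\delta|\log\delta|)$ once $\delta$ is small, which is dominated by $C_1\delta(\log\delta)^2$. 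For $\delta$ bounded away from $0$ the desired bound is trivial after enlarging $C_1$, since $\|\psi_f\|_1=\sum_k|W_f^k|\le\sum_k(C_*\lambda_*^k)^{-1}$ is bounded uniformly on $\mathcal{U}\cap\mathcal{H}^{\mathcal{M}}$; and when $\delta=0$ all the $c_\nu(f_i)$ agree, hence so do the $a_i(f_i)$ by \eqref{eqn:af}, and therefore $\psi_{f_1}=\psi_{f_2}$. (Note that the complexity bound built into $\mathscr{M}$ plays no role in this particular estimate.)

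The only real technical nuisance is that \eqref{eqn:C*lambda*} gives uniform expansion only after iterating, so an individual branch of $\tau^f$ need not be a contraction. This is handled in the usual way: run the recursion in blocks of $m_*$ steps, where $C_*\lambda_*^{m_*}>2$, absorbing the bounded expansion $(\inf_x|T'(x)|)^{-m_*}$ that can occur inside a block into the constants; the shape of the estimate is unchanged. I expect this bookkeeping, together with the careful verification that $A$ is exactly the set (modulo null sets) where $\tau^{f_1}$ and $\tau^{f_2}$ disagree, to be the only steps requiring genuine care; the rest of the argument is routine.
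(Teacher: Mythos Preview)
Your argument is correct and follows essentially the same route as the paper: both compare $W_{f_1}^k$ and $W_{f_2}^k$ level by level via a recursion for $e_k=|W_{f_1}^k\triangle W_{f_2}^k|$ and then split the sum $\sum_k e_k$ at a threshold $k_0\asymp|\log\delta|$. Your per--step error bound $|W_{f_2}^{k-1}\cap A|$ (with $A$ the union of arcs between the $c_\nu(f_i)$) is sharper than the paper's constant bound $pC\delta$, so you actually obtain $O(\delta|\log\delta|)$ rather than $O(\delta(\log\delta)^2)$; your block fix for the single--step contraction issue is fine, though it can also be avoided by carrying set inclusions to the end and using $|(\tau^f)^m(E)|\le (C_*\lambda_*^m)^{-1}|E|$ directly, since any composition of $m$ of these inverse branches is a section of $T^m$.
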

\begin{proof}
For $j=1,2$, let $W_j^0=(a_1(f_j), a_2(f_j))$ and for each $k\ge 0$, let
	$$W_{j}^{k+1}=\tau^{f_j}(W_j^k\setminus \Crit(f_j)).$$
	Then  
	\begin{equation}\label{eqn:varphif}
		\|\psi_{f_1}-\psi_{f_2}\|_1\le \sum_{k=0}^\infty |W_1^k\triangle W_2^k|.
	\end{equation}
	Let $A$ denote the union of grand orbits of points in $\Crit(f_1)\cup \Crit(f_2)$ under the iteration of $T$. That is to say, $x\in A$ if and only if there exists $y\in \Crit(f_1)\cup \Crit(f_2)$ and integers $m,n\ge 0$ such that $T^m(x)=T^n(y)$. By definition, $A$ is a countable set. For $j=1,2$ and $k\ge 0$, let $F_j^k=W_j^k\setminus A$, so that  $\tau^{f_1}$ and  $\tau^{f_2}$ are both well-defined on $F_j^k$, and $|F_j^k|=|W_j^k|$. For $1\le \nu\le p$, let $I_\nu\subset V_\nu$ be the (possibly empty) open arc with endpoints $c_\nu(f_1),c_\nu(f_2)$. Since $\tau^{f_1}=\tau^{f_2}$ on $F_2^k\setminus(\cup_{\nu=1}^p I_\nu)$, 
$$|\tau^{f_1}(F_2^k)\triangle \tau^{f_2}(F_2^k)|\le \sum_{\nu=1}^p |\tau^{f_1}(I_\nu)\cup \tau^{f_2}(I_\nu)|\le pC\delta,$$
where $C=2/\min_{x\in \mathbb{R}/\mathbb{Z}}|T'(x)|$. It follows that
	$$|W_1^{k+1}\triangle W_2^{k+1}|\le |\tau^{f_1}(F_1^k\triangle F_2^k)|+|\tau^{f_1}(F_2^k)\triangle \tau^{f_2}(F_2^k)|\le |\tau^{f_1}(F_1^k\triangle F_2^k)|+ pC\delta.$$
Consequently, for each $k\ge 0$ we have:
	$$|W_1^k\triangle W_2^k|\le k p C\delta + |(\tau^{f_1})^k (F_1^0\triangle F_2^0)|.$$
Also note that by \eqref{eqn:af},
$$|F_1^0\triangle F_2^0|=|W_1^0\triangle W_2^0|\le 2C_0\delta.$$
Combining the two displayed lines above with \eqref{eqn:C*lambda*} we have:
	$$|W_1^k\triangle W_2^k|\le (kpC+2C_0(C_*\lambda_*^k)^{-1})\delta,$$
and hence there exists $C'>0$ independent of $\delta$ such that 
$$\sum_{\lambda_*^{-k} \ge \delta} |W_1^k\triangle W_2^k| \le C' \delta  (\log \delta)^2.$$ 
On the other hand,
$$|W_1^k\triangle W_2^k|\le |W_1^k|+|W_2^k|=|(\tau^{f_1})^k(F_1^0)|+|(\tau^{f_2})^k(F_2^0)|\le 2(C_*\lambda_*^k)^{-1},$$
and hence  there exists $C''>0$ independent of $\delta$ such that
$$\sum_{\lambda_*^{-k} < \delta} |W_1^k\triangle W_2^k| \le C'' \delta .$$ 
Combining the two estimates above with \eqref{eqn:varphif}, the lemma follows.
\end{proof}

Due to Proposition~\ref{prop:shylocal} (taking $\mathcal{V}=\mathcal{C}^1$ and $S=\mathcal{H}^{\mathcal{M}}$) and \eqref{eqn:Hdecomp}, Theorem~\ref{thm:prevalent}  follows from the theorem below.

\begin{thm}\label{thm:prevalent1}
	Let $\mathcal{M}$ be a marked frame of type $p\ge 1$ and of complexity less than $\frac{1}{p}\log \lambda_*$. 
	For each $f_0\in\mathcal{H}^{\mathcal{M}}$, there exists $\varphi\in\mathcal{H}$ 
	and an open neighborhood  $\mathcal{U}_{f_0}$ of $f_0$ in $\mathcal{C}^1$ such that for any $F\in\mathcal{H}$, the set
	$\{t\in [0,1]: F+t\varphi\in \mathcal{U}_{f_0}\cap\mathcal{H}^{\mathcal{M}}\}$ has Lebesgue measure zero.
\end{thm}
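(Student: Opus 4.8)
The plan is to exploit the identity of Proposition~\ref{prop:identity}: since that identity holds for every $f\in\mathcal H^{\mathcal M}$, it suffices to produce a perturbation direction $\varphi$ along which the identity fails for Lebesgue-a.e.\ parameter, which forces $F+t\varphi\notin\mathcal H^{\mathcal M}$ for a.e.\ $t$. Fix $f_0\in\mathcal H^{\mathcal M}$ and apply Lemma~\ref{lem:crit value cont} to obtain $\nu_0$, positive integers $n_1,n_2$, a $\mathcal C^1$-neighbourhood $\mathcal U$ of $f_0$ (which will play the role of $\mathcal U_{f_0}$, to be shrunk below), and continuous maps $a_1,a_2\colon\mathcal U\cap\mathcal H^{\mathcal M}\to V$ for which the conclusions there and the estimate~\eqref{eqn:af} hold; keep also Lemma~\ref{lem:psimove} at hand. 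Write $f_t=F+t\varphi$ and $G=G(F)=\{t\in[0,1]:f_t\in\mathcal U\cap\mathcal H^{\mathcal M}\}$; for $t\in G$, Proposition~\ref{prop:identity} gives $\int f_t'\,\psi_{f_t}=R(f_t)$, where $R(f)=\sum_{k=0}^{n_1-1}f(T^ka_1(f))-\sum_{l=0}^{n_2-1}f(T^la_2(f))+(n_2-n_1)\beta(f)$.

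First I would fix a good $\varphi$. Since $f_0\in\mathcal H^{\mathcal M}\subset\mathcal H$, the set $K(f_0)$ has zero topological entropy (by \cite{GS24}), hence zero Lebesgue measure, and one may assume the closure of $V'$ has Lebesgue measure $<1$, so there is an open $N\supset\overline{V'}$ with $|N|<1$. Because $\mathcal H\hookrightarrow\mathcal C^1$ has dense image, choose $\varphi\in\mathcal H$ with $\varphi'\geq 1$ on $N$ and with $\|\varphi\|_\infty$ as small as we wish. After shrinking $\mathcal U$, $|(a_1(f),a_2(f))|$ stays bounded below on $\mathcal U\cap\mathcal H^{\mathcal M}$, and since every $W_f^n$ $(n\geq1)$ lies in $V'\subset N$ with $\varphi'\geq1$ there,
\[
\int_{\R/\Z}\varphi'\,\psi_f\,\dif x\ \geq\ \int_{\R/\Z}\psi_f\,\dif x\ \geq\ |W_f^1|\ \geq\ \tfrac1{C_T}\,|(a_1(f),a_2(f))|\ \geq\ c_0'>0 ,\qquad f\in\mathcal U\cap\mathcal H^{\mathcal M},
\]
with $c_0'$ depending only on $f_0$ (not on $\varphi$, given the constraint on $\varphi'$, nor on $F$); we then impose $\|\varphi\|_\infty<c_0'/(4\max(n_1,n_2))$.

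Next, the key estimate. For $t_1,t_2\in G$, subtract the two instances of the identity, using $\int f_{t_2}'\psi_{f_{t_2}}-\int f_{t_1}'\psi_{f_{t_1}}=(t_2-t_1)\int\varphi'\psi_{f_{t_2}}+\int f_{t_1}'(\psi_{f_{t_2}}-\psi_{f_{t_1}})$, to get
\[
(t_2-t_1)\int\varphi'\psi_{f_{t_2}}\ =\ R(f_{t_2})-R(f_{t_1})-\int f_{t_1}'\,(\psi_{f_{t_2}}-\psi_{f_{t_1}}).
\]
Since $f_{t_i}\in\mathcal U$, both $\|f_{t_i}'\|_\infty$ and $\mathrm{Lip}(f_{t_i})$ are bounded in terms of $f_0$ only; hence by Lemma~\ref{lem:psimove} the last term is $O(\delta(\log\delta)^2)$ with $\delta:=\delta(f_{t_1},f_{t_2})=\max_\nu d(c_\nu(f_{t_1}),c_\nu(f_{t_2}))$, and by~\eqref{eqn:af}, the Lipschitz bound, $\|f_{t_2}-f_{t_1}\|_\infty=|t_2-t_1|\,\|\varphi\|_\infty$ and the convexity estimate $|\beta(f_{t_2})-\beta(f_{t_1})|\leq|t_2-t_1|\,\|\varphi\|_\infty$, one gets $|R(f_{t_2})-R(f_{t_1})|\leq 2\max(n_1,n_2)\|\varphi\|_\infty|t_2-t_1|+C\delta$. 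Feeding in the lower bound and $\|\varphi\|_\infty<c_0'/(4\max(n_1,n_2))$, and shrinking $\mathcal U$ so that $\delta<\delta_0$ for some fixed small $\delta_0$ and all $t_1,t_2\in G$, this yields
\[
|t_2-t_1|\ \leq\ C'\,\delta(f_{t_1},f_{t_2})\,\big(\log\delta(f_{t_1},f_{t_2})\big)^2\qquad(t_1,t_2\in G),
\]
with $C'$ depending on $F,f_0,\varphi$.

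Finally I would deduce $\mathrm{Leb}(G)=0$ from this inequality, and this is the step I expect to be the main obstacle. The inequality says precisely that the continuous map $\gamma\colon t\mapsto(c_\nu(f_t))_{\nu=1}^p$, $G\to(\R/\Z)^p$, has a uniformly continuous inverse; the difficulty is to convert this — together with the facts that $\gamma$ takes values in a fixed bounded set and that $\delta_0$, hence $\mathrm{diam}(G)$, can be made small — into a statement of measure zero while absorbing the $(\log)^2$-loss of Lemma~\ref{lem:psimove}. A natural route is to compare, along the line, the vanishing quantity $\int f_t'\psi_{f_t}-R(f_t)$ with the globally defined reference functional $t\mapsto\int f_t'\,\psi_{f_0}-R_0(f_t)$ built from the frozen data $\psi_{f_0},a_1(f_0),a_2(f_0)$ and the convex function $\beta(f_t)$: the choice of $\varphi$ above makes this reference functional strictly monotone in $t$ (its a.e.-derivative is $\int\varphi'\psi_{f_0}$ up to $O(\|\varphi\|_\infty)$, hence $\geq\tfrac12\int\psi_{f_0}>0$), while on $G$ it agrees with $\int f_t'\psi_{f_t}-R(f_t)=0$ up to the error terms estimated above. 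Making this precise — in particular upgrading ``arbitrarily small measure'' to ``measure zero'', presumably through a countable decomposition of $\mathcal H^{\mathcal M}$ refining \eqref{eqn:Hdecomp} into pieces on which the relevant critical points are pinned down to arbitrarily small scale — is where the bulk of the remaining work lies.
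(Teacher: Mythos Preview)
Your overall architecture is correct and matches the paper's: subtract two instances of the identity in Proposition~\ref{prop:identity}, isolate $(t_2-t_1)\int\varphi'\psi_{f_{t_2}}$, control the remaining terms by $\|\varphi\|_\infty|t_2-t_1|$ and by $\delta(\log\delta)^2$ via Lemma~\ref{lem:psimove} and \eqref{eqn:af}, and conclude $|t_1-t_2|\le C'\delta(\log\delta)^2$. There are, however, two genuine gaps.

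\textbf{Construction of $\varphi$.} You ask for $\varphi\in\mathcal H$ with $\varphi'\ge 1$ on a fixed open set $N\supset\overline{V'}$ and with $\|\varphi\|_\infty$ as small as you wish. These two constraints are incompatible: if $N$ contains an arc of length $\ell$, then $\varphi'\ge 1$ there forces $\mathrm{osc}(\varphi)\ge\ell$, hence $\|\varphi\|_\infty\ge\ell/2$. Moreover your lower bound $\int\varphi'\psi_f\ge\int\psi_f$ is itself universally bounded above (since $|W_f^n|\lesssim\lambda_*^{-n}$), so the ratio $\big(\int\varphi'\psi_f\big)/\|\varphi\|_\infty$ you can achieve this way is bounded by a geometric constant, while you need it to beat $4\max(n_1,n_2)$, which depends on $f_0$ and may be arbitrarily large. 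The paper resolves this by using the other half of Lemma~\ref{lem:varphi}, namely $\|\psi_{f_0}\|_\infty=\infty$: for any $L$ one finds $\phi\in\mathcal C^1$ supported on a compact set $I\subset V\setminus K(f_0)$ with $\int\phi'\psi_{f_0}\ge L+2$, and then takes $\varphi\in\mathcal H$ $C^1$-close to $\phi$, so that $|\varphi|\le 1$ in a neighbourhood of $K(f_0)$ while $\int\varphi'\psi_f\ge L$ for all nearby $f$. Choosing $L=2(n_1+n_2+1)$ then yields exactly your needed inequality. In short: do not try to make $\varphi$ small; make $\int\varphi'\psi_{f_0}$ large by exploiting that $\psi_{f_0}$ is unbounded.

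\textbf{From $|t_1-t_2|\le C'\delta(\log\delta)^2$ to measure zero.} You flag this as the main obstacle and propose a reference-functional/monotonicity argument plus a further countable refinement of \eqref{eqn:Hdecomp}. That is not what closes the argument; the missing ingredient is the complexity hypothesis on $\mathcal M$, which you never use. The paper proceeds as follows. For each $n$ let $\mathcal R^n$ be the set of rectangles $U_1\times\cdots\times U_p$ with each $U_\nu$ a component of $V^n$. The assumption $\mathrm{Comp}(V,V')<\tfrac1p\log\lambda_*$ gives $\#\mathcal R^n\le\lambda_2^n$ and $\mathrm{diam}(R)\le\lambda_1^{-n}$ for some $1<\lambda_2<\lambda_1<\lambda_*$. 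Since $\mathbf c(F_t)=(c_\nu(F_t))_{\nu=1}^p$ always lies in some $R\in\mathcal R^n$, and whenever $\mathbf c(F_{t_1}),\mathbf c(F_{t_2})$ lie in the same $R$ your inequality gives $|t_1-t_2|\le C'\,\mathrm{diam}(R)(\log\mathrm{diam}(R))^2$, the set $G$ is covered by at most $\#\mathcal R^n$ intervals of that length. Hence $\mathrm{Leb}(G)\lesssim \lambda_2^n\cdot\lambda_1^{-n}\,n^2\to 0$ as $n\to\infty$. No additional decomposition of $\mathcal H^{\mathcal M}$ is needed.
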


We need a lemma for the proof of this theorem. 
\begin{lem} Let $f_0\in \mathcal{H}^{\mathcal{M}}$ be as in Theorem~\ref{thm:prevalent1}. Then for each $L\ge 1$, there exists $\varphi \in\mathcal{H}$ such that the following hold: 
\begin{equation}\label{eqn:varphi}
  \|\varphi\|_\infty < 1  \quad\text{and}\quad  \int_{\R/\Z}\varphi'\psi_{f_0} \dif x > L+1.
\end{equation}
\end{lem}

\begin{proof} Since $\mathcal{H}$ is dense in $\mathcal{C}^1$, it suffices to find $\varphi\in \mathcal{C}^1$ satisfying \eqref{eqn:varphi}. To prove this, we first pick a Borel set $J\subset \R/\Z$ with positive Lebesgue measure such that $\psi_{f_0}|J$ is bounded from above by a constant $C>0$. Such a set exists because $\|\psi_{f_0}\|_1<\infty$. Since $\|\psi_{f_0}\|_\infty=\infty$ and $\psi_{f_0}\ge 0$, the set 
$$E=\{x\in \R/\Z: \psi_{f_0}(x) \ge 4L+C+8\}$$ 
has positive Lebesgue measure. Let
$$\rho=\tfrac{1}{4|E|}\cdot 1_E- \tfrac{1}{4|J|} \cdot 1_J \in L^\infty(\R/\Z).$$
Then 
$$\int_{\R/\Z}\rho \dif x =0\,, \quad \|\rho\|_1=\frac{1}{2}\,, \quad \text{and} \quad  \int_{\R/\Z} \rho  \psi_{f_0} \dif x  \ge L+2\,.$$
Now let $\{\rho_n\}_{n=1}^\infty$ to be a sequence in $\mathcal{C}^0$ converging to $\rho$ in $L^2(\R/\Z)$. Let $\varphi_n$ be the function in $\mathcal{C}^1$ such that
$\varphi_n'=\rho_n-\int_{\R/\Z}\rho_n \dif x$ and $\varphi_n(\pi(0))=0$. Then 
$$\|\varphi_n'-\rho\|_1 \le \|\varphi_n'-\rho\|_2 \le  \|\rho_n-\rho\|_2 + \left|\int_{\R/\Z}\rho_n \dif x\right| \to \left|\int_{\R/\Z}\rho \dif x\right|= 0.$$
It follows that
$$\|\varphi_n\|_\infty \le \|\varphi_n'\|_1 \le \|\varphi_n'-\rho\|_1 + \|\rho\|_1 \to  \frac{1}{2},$$
and since $\|\psi_{f_0}\|_2<\infty$,
$$\int_{\R/\Z} \varphi_n' \psi_{f_0} \dif x \ge \int_{\R/\Z} \rho \psi_{f_0} \dif x - \|(\varphi_n'-\rho)\psi_{f_0}\|_1 \ge L+2 - \|\varphi_n'-\rho\|_2\cdot \|\psi_{f_0}\|_2 \to L+2.$$
Therefore,  \eqref{eqn:varphi}  holds by taking $\varphi=\varphi_n$ for a sufficiently large $n$.
\end{proof}

\begin{proof}[Proof of Theorem~\ref{thm:prevalent1}]

Fix $L=2(n_1+n_2+1)$ and let $\varphi\in \mathcal{H}$ be as in \eqref{eqn:varphi}. By Lemma~\ref{lem:crit cont}, Lemma~\ref{lem:psimove} and the upper semi-continuity of $K(\cdot)$, there exists an open neighborhood  $\mathcal{U}_{f_0}\subset \mathcal{U}$ of $f_0$ in $\mathcal{C}^1$ 
and $C_2\ge 1$ such that for any $f\in \mathcal{U}_{f_0}\cap \mathcal{H}^{\mathcal{M}}$,  we have:
	\begin{equation}\label{eqn:integralphipsi}
		\int_{\R/\Z} \varphi'(x)\psi_f(x) \dif x \ge L,
	\end{equation}
and 
	\begin{equation}\label{eqn:f'bounded}
		\|f'\|_\infty\le C_2.
	\end{equation}
	
To proceed, take an arbitrary $F\in \mathcal{H}$, and let $F_t=F+t\varphi\in \mathcal{H},\,t\in\R$.

\begin{clm}
 There exists $C_3\ge 1$ such that if $F_{t_1}, F_{t_2}\in \mathcal{U}_{f_0}\cap\mathcal{H}^{\mathcal{M}}$, then
	\begin{equation}\label{eqn:t1t2}
		|t_1-t_2|\le C_3 \hat{\delta} \quad\text{for}\quad \hat{\delta} :=\delta(F_{t_1},F_{t_2})(\log \delta(F_{t_1},F_{t_2}))^2,
	\end{equation}
	where $\delta(F_{t_1},F_{t_2})$ is defined by \eqref{eqn:delta crit}.
\end{clm}
	
	\begin{proof}[Proof of Claim.]
		By Proposition~\ref{prop:identity}, for $i=1,2$,
	\begin{equation}\label{eqn:identity'}
  \begin{split}
   & \int_{\R/\Z} F_{t_i}'(x)\psi_{F_{t_i}}(x) \dif x \\
  = & \sum_{k=0}^{n_1-1} F_{t_i}(T^k(a_1(F_{t_i})))-\sum_{k=0}^{n_2-1} F_{t_i}(T^k(a_2(F_{t_i})))+(n_2-n_1)\int F_{t_i} \dif\mu_{F_{t_i}}.
  \end{split}
    \end{equation}
Denote
$$\Delta_1= \left|\int F_{t_1}'\psi_{F_{t_1}} \dif x-\int F'_{t_2} \psi_{F_{t_2}} \dif x\right|,$$
$$\Delta_{2,j}=\sum_{k=0}^{n_j-1}|F_{t_1}(T^k(a_j(F_{t_1})))-F_{t_2}(T^k(a_j(F_{t_2})))|, \quad j=1,2,$$
$$\Delta_3 = \left|\int F_{t_1} \dif\mu_{F_{t_1}}-\int F_{t_2} \dif\mu_{F_{t_2}}\right|.$$
Then \eqref{eqn:identity'} implies that
\begin{equation}\label{eqn:identity''}
  \Delta_1 \le \Delta_{2,1} + \Delta_{2,2} + (n_1+n_2)\Delta_3.
\end{equation}
Now let us estimate each term in \eqref{eqn:identity''} separately. 

Firstly, since 
		\begin{align*}
			\Delta_1 \ge {} & \left|\int (F'_{t_1}-F'_{t_2}) \psi_{F_{t_1}} \dif x\right|-\left|\int F'_{t_2}(\psi_{F_{t_1}}-\psi_{F_{t_2}}) \dif x\right| \\
			={} & |t_1-t_2|\left|\int  \varphi'\psi_{F_{t_1}} \dif x\right|-\left|\int F'_{t_2} (\psi_{F_{t_1}}-\psi_{F_{t_2}})\dif x\right|\\
			\ge{} & |t_1-t_2| \left|\int  \varphi'\psi_{F_{t_1}} \dif x\right|-\|F'_{t_2}\|_\infty \|\psi_{F_{t_1}}-\psi_{F_{t_2}}\|_1,
		\end{align*}
		and since $F_{t_1},F_{t_2}\in \mathcal{U}_{f_0}\cap\mathcal{H}^{\mathcal{M}}$, by \eqref{eqn:integralphipsi}, \eqref{eqn:f'bounded} and Lemma~\ref{lem:psimove}, we have
		\begin{equation}\label{eqn:term1}
		\Delta_1 \ge  L|t_1-t_2| -C_1C_2 \hat{\delta}.
		\end{equation}

Secondly, for $j=1,2$ and for each $0\le k< n_j$, 
		\begin{align*}
			& \left|F_{t_1}(T^k(a_j(F_{t_1})))-F_{t_2}(T^k(a_j(F_{t_2})))\right|\\
			\le{} & |F_{t_1} (T^k(a_j(F_{t_1})))-F_{t_2} (T^k(a_j(F_{t_1})))|+ |F_{t_2}(T^k(a_j(F_{t_1})))-F_{t_2}(T^k(a_j(F_{t_2})))|\\
			\le{} & |t_1-t_2|\cdot|\varphi(T^k(a_j(F_{t_1})))|+ \|F'_{t_2}\|_\infty \cdot d\big(T^k(a_j(F_{t_1})),T^k(a_j(F_{t_2}))\big),\\
			\le{} & |t_1-t_2| + C_0 C_2\cdot\delta(F_{t_1},F_{t_2}).
		\end{align*}
Note that in the last step we used 	$F_{t_1},F_{t_2}\in \mathcal{U}_{f_0}\cap\mathcal{H}^{\mathcal{M}}$, $|\varphi|\le 1$, \eqref{eqn:f'bounded} and \eqref{eqn:af}. 
It follows that
		\begin{equation}\label{eqn:term2}
			\Delta_{2,j} \le n_j \left(|t_1-t_2| +  C_0 C_2\cdot\delta(F_{t_1},F_{t_2}) \right), \quad j=1,2.
		\end{equation}

Thirdly,  for $i=1,2$, since $\mu_{F_{t_i}}$ maximizes  $F_{t_i}$ and since $|\varphi|\le 1$, we have: 
		$$\int F_{t_1} \dif\mu_{F_{t_1}}-\int F_{t_2} \dif\mu_{F_{t_2}}
		\le \int (F_{t_1}- F_{t_2}) \dif\mu_{F_{t_1}}= (t_1-t_2) \int \varphi \dif\mu_{F_{t_1}}\le |t_1-t_2|,$$
		and similarly, 
		$$\int F_{t_1} \dif\mu_{F_{t_1}}-\int F_{t_2} \dif\mu_{F_{t_2}}
		\ge \int (F_{t_1}- F_{t_2}) \dif\mu_{F_{t_2}}= (t_1-t_2) \int \varphi \dif\mu_{F_{t_2}}\ge -|t_1-t_2|.$$
Therefore,
		\begin{equation}\label{eqn:term3}
			\Delta_3 \le |t_1-t_2|.
		\end{equation}

Combining the inequalities \eqref{eqn:identity''}, \eqref{eqn:term1}, \eqref{eqn:term2}  and \eqref{eqn:term3} and recalling $L=2(n_1+n_2+1)$, we obtain \eqref{eqn:t1t2}.
	\end{proof}

	Finally, let us show that the set 
	$$S:=\{t\in [0,1] : F_t\in \mathcal{U}_{f_0}\cap \mathcal{H}^{\mathcal{M}}\}$$ has Lebesgue measure zero. 
	
	Let $(V, V')$ be the underlying frame of $\mathcal{M}$. Let $$V^n=\{x\in V': T(x), T^2(x), \dots, T^n(x)\in V\}$$ 
	for each $n$. Let $\mathcal{R}^n$ denote the collection of rectangles $U_1\times U_2\times \dots\times U_p$, 
	where $U_1, U_2, \dots, U_p$ are connected components of $V^n$. As the complexity of $(V,V')$ is less than $\frac{1}{p}\log \lambda_*$, there exist $1<\lambda_2<\lambda_1<\lambda_*$ such that the number of elements in $\mathcal{R}^n$ is at most $\lambda_2^n$, and each of them has diameter at most 
	$\lambda_1^{-n}$, provided that $n$ is large enough. Thus 
	\begin{equation}\label{eqn:Rn}
		\sum_{R\in \mathcal{R}^n}  \diam (R)(\log \diam (R))^2\to 0
	\end{equation}
	as $n\to\infty$. 
	
	Let $\mathbf{c}(F_t)=(c_\nu(F_t))_{\nu=1}^p$ for $t\in S$. Note that
	$\mathbf{c}(F_t)$ is contained in a rectangle in $\mathcal{R}^n$. On the other hand, by \eqref{eqn:t1t2},
	if $\mathbf{c}(F_{t_1}), \mathbf{c}(F_{t_2})$ are contained in the same  element $R$ of 
	$\mathcal{R}^n$, then 
	$$|t_1-t_2|\le C_3 \cdot \diam (R)(\log \diam (R))^2.$$
	Combining this with \eqref{eqn:Rn}, the statement follows.
\end{proof}

\bibliographystyle{plain}

\bibliography{referTPO}

\end{document}